\newcommand{\RR}{{\mathbb R}}
\newcommand{\NN}{{\mathbb N}}
\newcommand{\CC}{{\mathbb C}}
\newcommand{\EE}{{\mathbb E}}
\newcommand{\bK}{\mathbf{K}}
\newcommand{\bR}{\mathbf{R}}
\newcommand{\bS}{\mathbf{S}}
\newcommand{\cA}{\mathcal{A}}
\newcommand{\cE}{\mathcal{E}}
\newcommand{\cL}{\mathcal{L}}
\newcommand{\cM}{\mathcal{M}}
\newcommand{\cO}{\mathcal{O}}
\newcommand{\cP}{\mathcal{P}}
\newcommand{\cS}{\mathcal{S}}
\font\tenronde=rsfs10
\font\sevenronde=rsfs7
\font\fiveronde=rsfs5
\def\ronde{\fam\rondefam\tenronde}
\newcommand{\sF}{{\ronde F}}
\newcommand{\sT}{{\ronde T}}
\newcommand{\eps}{\varepsilon }
\newcommand{\vp}{\varphi}
\newcommand{\D}{\partial }
\newcommand{\na}{{\nabla}}
\newcommand{\Id}{\mathrm{Id}}
\newcommand{\re}{\mathrm{Re\,}}
\newcommand{\im}{\mathrm{Im\,}}
\newcommand{\mez}{\frac{1}{2}}
\newtheorem{theo}{Theorem}[section]
\newtheorem{prop}[theo]{Proposition}
\newtheorem{cor}[theo]{Corollary}
\newtheorem{lem}[theo]{Lemma}
\newtheorem{defi}[theo]{Definition}
\newtheorem{ass}[theo]{Assumption}
\newtheorem{rem}[theo]{Remark}
\numberwithin{equation}{section}
\title{ $L^2$ well posed   Cauchy Problems and   Symmetrizability of 
First Order Systems}
\author{
{\textsc{ Guy Métivier} }\footnote {\small Université de Bordeaux - CNRS, Institut de Mathématiques de Bordeaux, 
351 Cours de la Libération , 33405 Talence Cedex, France}}
\begin{document} 
\maketitle

\begin{abstract}
The Cauchy problem for first order system $L(t, x, \D_t, \D_x)$ is known to be well posed in $L^2$ when a it admits 
a microlocal symmetrizer $S(t,x, \xi)$ which is smooth in $\xi$ and Lipschitz continuous in  $(t, x)$. This paper contains 
three main results. First we show that a Lipsshitz smoothness globally in $(t,x, \xi)$ is sufficient. 
Second, we show that the existence of symmetrizers with a given smoothness is equivalent to the existence 
of  \emph{full symmetrizers} having the same smoothness. This notion was first introduced in \cite{FriLa1}. 
This is the key point to prove the third result that the existence of microlocal symmetrizer is preserved if one changes 
the direction of time, implying local uniqueness and finite speed of propagation. 
\end{abstract} 

\tableofcontents 

\section{Introduction}

 This  paper is concerned with the well posedness in $L^2$ of  the Cauchy problem for first order square systems
  \begin{equation}
 \label{eqt}
 \left\{\begin{aligned}
  &Lu :=  A_0(t, x) \D_t u  +   \sum_{j = 1}^d A_j (t,  x) \D_{  x_j} u  + B(t,x) u  = f , \qquad   t > 0, 
   \\
   &u_{| t = 0} = u_0. 
 \end{aligned}\right.
 \end{equation} 
 The starting point is the well known theory of hyperbolic symmetric systems in the sense of 
 Friedrichs (\cite{Fried1, Fried2}): if the matrices $A_j$ are Lipschitz continuous on $[0,T] \times \RR^d$, 
 hermitian symmetric, and if $A_0$ is definite positive with $A_0^{-1}$ bounded, then for all 
 $u_0 \in L^2(\RR^d)$ and $f  \in L^{1} ([0, T] ; L^2(\RR^d))$, 
 the equation \eqref{eqt} has a unique solution 
 $u \in C^{0} ([0, T] ; L^2(\RR^d))$ which satisfies 
   \begin{equation} 
\label{estim2}
   \big\|  u (t) \big\|_{L^2(\RR^d)}   \le C \big\|  u _0  \big\|_{L^2(\RR^d)}
  + C \int_0^t  \big\| L    u  (s) \big\|_{L^2(\RR^d)} ds, 
\end{equation}
for some constant  $C$ independent of $u_0$.  Additional properties are  local uniqueness
and finite speed propagation.  The question discussed in this paper is to know for which systems
these properties remain true.

For scalar equations of order $m$, the analogue would be the well posedness in  Sobolev spaces 
$H^{m-1}$, for which strict hyperbolicity is  necessary (\cite{IP}) and sufficient   (\cite {Gard2, Leray}).
 This completely settles the question   for scalar equations but for systems, the situation 
 is much more complex.

A necessary condition has been given by V.Ivrii and V.Petkov (\cite{IP}): they have shown 
that  if the estimate \eqref{estim2} is valid for $u \in C^\infty_0 (]0, T[ \times \RR^d)$,   
 then there exists a  bounded microlocal symmetrizer $S(t,x, \xi) $ for \eqref{eqt} (the precise definition is recalled below). This is equivalent to a strong form of hyperbolicity of the principal symbol, which we call   \emph{strong hyperbolicity of the symbol}, namely that $L + B_1$  is hyperbolic for all matrix $B_1(t,x)$. 
Of course it is stronger than hyperbolicity which is  known to be a necessary condition for the Cauchy problem to be well posed 
in $C^\infty$ 
(see \cite{Lax1, Miz1, Gard1}  and the review paper \cite{Gard3}). In particular,  
\eqref{estim2}  are the best estimates in terms of regularity that one can expect for the Cauchy problem.

  On the side of sufficient conditions, except in the constant coefficient case, 
  where the energy estimate \eqref{estim2} is easily obtained on the space-Fourier transform of the equation, 
  the existence of a bounded symmetrizer  does not imply in general   that the problem is well posed, even in $C^\infty$,   
A counterexample is given in  \cite{Str} and another one  is proposed in Section~\ref{sectionExamples}.   
 Besides the case of symmetric systems recalled above for which the symmetrizer $S(t, x)$ is independent of $\xi$, 
  the Cauchy problem is known to well posed in $L^2$ 
when the microlocal symmetrizer is smooth in $\xi$ and at least 
Lipschitz continuous in $(t,x)$ (see \cite{Lax2, Met} and Theorem~\ref{L2WPLipold} below for a precise statement). 
In this case, the energy estimates are proven using 
the usual pseudo-differential calculus or the para-differential calculus when the coefficient 
have limited smoothness. This covers the case of  strictly hyperbolic systems and
the more generally case of hyperbolic systems with constant multiplicity (e.g. \cite{Cal}, \cite{Yam}). 
This also applies to a the case of "generic" double eigenvalues, still assuming the strong hyperbolicity 
of the symbol, see Theorem~\ref{theodouble} below.

The first objective of this paper is to revisit these questions under the angle of the smoothness 
of the symmetrizer. We prove that 
the Lipschitz continuity in $(t,x,\xi)$ for $\xi \ne 0$  of the symmetrizer $S$ is sufficient to
 obtain the $L^2$ estimates and the $L^2$ well posedness. In addition, we give examples and counterexamples showing that the Lipschitz condition is sharp. 
 
 The second main result of this paper is to  prove that the existence of microlocal symmetrizer 
is preserved by  a change of time,  as this  is essential to obtain local uniqueness
and the precise description of the propagation of the support of solutions (see \cite{JMR1,Rauch1}). 
More surprisingly, we show that the existence of symmetrizers of a given smoothness
is equivalent to the existence  of \emph{full symmetrizers} of the same smoothness, a notion introduced in \cite{FriLa1}. 
This link is the key point in the proof of existence of of microlocal symmetrizers in any direction of hyperbolicity. 

\bigbreak

We now briefly  present the results.  Note that \eqref{estim2} applied to 
$e^{ \gamma t} u$, $u \in C^\infty_0 (]0, T[ \times \RR^d)$   implies that 
\begin{equation} 
\label{estim1}
\forall \gamma \ge \gamma_0, \  :  \quad 
\gamma \big\|  u \big\|_{L^2(\RR^{1+d})}   \le C  \big\| (L + \gamma A_0)  u \big\|_{L^2(\RR^{1+d})} , 
\end{equation}
 for some constants $C$ and $\gamma_0$ independent of $ u$. 
 This estimate is elliptic like and applying it to functions of the form
 \begin{equation*}
 u (\tilde x)  = e^{ i \lambda \tilde x \cdot \tilde \xi    }   \lambda^{ - \alpha d/2} \chi (\lambda^\mez (\tilde x - \tilde x_0))   \underline u, 
 \end{equation*}
 and  $\gamma = \lambda \gamma_0$  and letting $\lambda$ tend to $+ \infty$ implies 
  \begin{lem}
 Suppose that the coefficients of $L$ are continuous and bounded on the open set $ \Omega$ and there are constants $\gamma_0$ and $C$ such that 
  \begin{equation}
  \label{P2b}
 \gamma \big\|  u  \big\|_{L^2 ( \Omega) }  \le   C 
   \big\| \big( L   + \gamma A_0)\big) u   \big\|_{L^2(\Omega)} .  
 \end{equation} 
 for all $\gamma \ge \gamma_0$ and $u \in C^\infty_0(  \Omega)$. 
Then the principal symbol $L_1( \tilde x, \tilde \xi)$  of $L$ satisfies 
for all  $(t,x) \in \Omega$, all $\gamma \in \RR$ and $u\in \CC^N$: 
  \begin{equation}
  \label{estresolv}
  | \gamma |  \big|   u \big|  \le  
   C  \big|    ( L_1(t,x, \tau,  \xi)  +  i    \gamma A_0(t,x) )   \underline u   \big|.  
 \end{equation}
 \end{lem}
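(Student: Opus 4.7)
The plan is to substitute the proposed test function into \eqref{P2b} with $\gamma = \lambda\beta$ and extract \eqref{estresolv} by letting $\lambda \to +\infty$. Fix $\tilde x_0 = (t_0,x_0) \in \Omega$, $\tilde\xi = (\tau,\xi) \in \RR^{1+d}$, $\underline u \in \CC^N$, and $\beta > 0$. With a cutoff $\chi \in C^\infty_0(\RR^{1+d})$ of unit $L^2$ norm, set
\[
\phi_\lambda(\tilde x) := \lambda^{-\alpha d/2}\chi\big(\lambda^{\mez}(\tilde x - \tilde x_0)\big),
\qquad
u_\lambda(\tilde x) := e^{i\lambda\tilde x\cdot\tilde\xi}\,\phi_\lambda(\tilde x)\,\underline u.
\]
For $\lambda$ large enough, $\mathrm{supp}\,u_\lambda \subset \Omega$ and $\lambda\beta \ge \gamma_0$, so \eqref{P2b} applies. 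A change of variable yields immediately $\|u_\lambda\|_{L^2} = c_\lambda |\underline u|$ with $c_\lambda := \lambda^{-\alpha d/2 - (1+d)/4}$.

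Expanding $(L+\gamma A_0)u_\lambda$ by the Leibniz rule,
\[
(L+\gamma A_0)u_\lambda = e^{i\lambda\tilde x\cdot\tilde\xi}\Big[\lambda\big(iL_1(\tilde x,\tilde\xi) + \beta A_0(\tilde x)\big)\phi_\lambda\underline u + \sum_{j} A_j(\tilde x)(\D_j\phi_\lambda)\underline u + B(\tilde x)\phi_\lambda\underline u\Big].
\]
The last two terms have $L^2$ norms $O(\lambda^{\mez})c_\lambda|\underline u|$ and $O(1)c_\lambda|\underline u|$ respectively, since each $\D_j\phi_\lambda$ gains a factor $\lambda^{\mez}$. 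Because $A_j, A_0$ are uniformly continuous on compact subsets of $\Omega$ and $\mathrm{supp}\,\phi_\lambda$ contracts to $\tilde x_0$ at rate $\lambda^{-\mez}$, replacing $L_1(\tilde x,\tilde\xi)$ and $A_0(\tilde x)$ by their values at $\tilde x_0$ costs a further $o(\lambda)\,c_\lambda|\underline u|$ in $L^2$. Hence
\[
\|(L+\gamma A_0)u_\lambda\|_{L^2} \;\le\; \lambda\,\big|\big(iL_1(\tilde x_0,\tilde\xi) + \beta A_0(\tilde x_0)\big)\underline u\big|\,c_\lambda \;+\; o(\lambda)\,c_\lambda\,|\underline u|.
\]

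Inserting into \eqref{P2b}, dividing by $\lambda c_\lambda$, and letting $\lambda \to +\infty$ yields
\[
\beta|\underline u| \;\le\; C\,\big|\big(iL_1(\tilde x_0,\tilde\xi) + \beta A_0(\tilde x_0)\big)\underline u\big| \;=\; C\,\big|\big(L_1(\tilde x_0,\tilde\xi) - i\beta A_0(\tilde x_0)\big)\underline u\big|,
\]
the last equality because multiplication by $i$ is an isometry on $\CC^N$. This is \eqref{estresolv} at $\gamma = -\beta$; running the same argument with the conjugate phase $e^{-i\lambda\tilde x\cdot\tilde\xi}$ reverses the sign of $L_1$ in the leading term and delivers \eqref{estresolv} at $\gamma = +\beta$. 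Letting $\beta$ range over $(0,+\infty)$ and $\tilde x_0,\tilde\xi$ over $\Omega \times \RR^{1+d}$ covers every $\gamma \ne 0$, while the case $\gamma = 0$ is trivial. The only genuine obstacle is bookkeeping: the concentration scale $\lambda^{-\mez}$ is calibrated precisely so that the $\lambda^{\mez}$ blow-up from differentiating the cutoff is strictly dominated by the $\lambda$ from the oscillating phase, while the shrinking support allows the freezing of coefficients to rely on mere continuity rather than any Lipschitz bound.
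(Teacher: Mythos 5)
Your proof is correct and follows precisely the route the paper sketches just before the lemma: substitute the oscillating, concentrating wave packet $u_\lambda = e^{i\lambda\tilde x\cdot\tilde\xi}\lambda^{-\alpha d/2}\chi(\lambda^{1/2}(\tilde x-\tilde x_0))\underline u$ with $\gamma$ proportional to $\lambda$, isolate the order-$\lambda$ symbol term, freeze the continuous coefficients at $\tilde x_0$ over the shrinking support, and flip $\tilde\xi\mapsto-\tilde\xi$ to cover both signs of $\gamma$. The bookkeeping (scaling of $c_\lambda$, the $O(\lambda^{1/2})$ cutoff term, the $o(\lambda)$ freezing error) is carried out correctly and matches what the paper leaves implicit.
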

 There is no sign condition on $\gamma$ as seen by   changing $\tilde \xi$ to $-\tilde \xi$.   
 When it holds, we say that the \emph{symbol is strongly hyperbolic} in the time direction. 
The condition \eqref{estresolv} has several equivalent formulations,  see Section~\ref{sectionstroghyp}.
One of them is that the symbols admits a bounded symmetrizer.  

\begin{defi}
\label{defmicrosym}
 A microlocal symmetrizer for $L_1$ is a  bounded  
 matrix $S(t, x, \xi)$, homogeneous of degree $0$ in $\xi\ne 0 $,  such that  $S(t, x, \xi) A_0(t, x)$
 is symmetric and uniformly definite positive, and   $S(t, x, \xi) A(t, x, \xi) $
  is symmmetric, where 
 $A(t, x, \xi) = \sum A_j(t, x) \xi_j$. 
\end{defi} 
 
 Combining the lemma and Theorem~\ref{theostronghyp} below, we recover  the 
 the necessary condition given in \cite{IP}: 
\begin{prop} 
If $L$ has continuous coefficient on the open set $ \Omega$ and there are constants $\gamma_0$ and $C$ such that \eqref{P2b} is satisfied, then 
the principal symbol $L_1$ must admit  a bounded symmetrizer $S(t,x, \xi)$ on 
$\Omega \times \RR^d\setminus{0}$. 
\end{prop}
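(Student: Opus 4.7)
The proof is a direct combination of the lemma just stated with Theorem~\ref{theostronghyp}. The plan is therefore in three short steps.

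First, I would apply the lemma pointwise on $\Omega$. The hypothesis gives \eqref{P2b} with constants $\gamma_0$ and $C$ independent of the point, so the lemma delivers a \emph{single} constant $C$ (the same $C$) such that the resolvent bound \eqref{estresolv} holds for every $(t,x) \in \Omega$, every $(\tau, \xi) \in \RR^{1+d}$, every $\gamma \in \RR$ and every $\underline u \in \CC^N$. Both signs of $\gamma$ are automatic, as noted in the paragraph following the lemma, by changing $\tilde\xi$ to $-\tilde\xi$ in the test functions used to derive the estimate.

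Next I would read \eqref{estresolv} as exactly the uniform strong hyperbolicity condition in the time direction introduced in Section~\ref{sectionstroghyp}. By Theorem~\ref{theostronghyp}, one of the equivalent forms of strong hyperbolicity is the existence of a bounded symmetrizer in the sense of Definition~\ref{defmicrosym}. Applied at each frozen $(t,x) \in \Omega$ and each $\xi \ne 0$, this produces a matrix $S(t,x,\xi)$ such that $SA_0$ is hermitian and uniformly definite positive and $SA$ is hermitian, with all bounds (and the positivity constant of $SA_0$) controlled only by the constant $C$ of the lemma, hence uniformly over $\Omega \times \RR^d \setminus \{0\}$. Homogeneity of degree $0$ in $\xi$ is compatible with the fact that $A(t,x,\xi)$ is homogeneous of degree $1$, and can be enforced in the construction by restricting to $|\xi|=1$ and extending by homogeneity.

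The only subtle point is that Definition~\ref{defmicrosym} imposes no regularity of $S$ in $(t,x,\xi)$, merely boundedness and symmetry, so no continuity or measurability argument is needed after the pointwise construction. Consequently, the genuine content of the proposition is already contained in Theorem~\ref{theostronghyp}, which is invoked here as a black box; the main obstacle, namely producing a symmetrizer from the resolvent estimate \eqref{estresolv}, is deferred to the proof of that theorem and does not reappear at this stage.
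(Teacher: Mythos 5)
Your proposal is correct and follows exactly the route the paper indicates: apply the lemma pointwise to obtain the uniform resolvent estimate \eqref{estresolv}, recognize it as the strong hyperbolicity condition \eqref{resolvp1} in the time direction, and invoke Theorem~\ref{theostronghyp} to produce the bounded family of symmetrizers. The paper itself gives no more detail than the one-line remark "Combining the lemma and Theorem~\ref{theostronghyp} below, we recover the necessary condition," so your spelled-out version, including the observations about homogeneity and the absence of any regularity requirement on $S$, is a faithful elaboration of the intended argument.
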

 
In the constant coefficients case, the existence of a bounded symmetrizer 
is also sufficient, as immediately seen 
by Fourier synthesis. 
 In general, that is for variable coefficients, this condition is 
 far from being sufficient for the well posedness of the Cauchy problem~\eqref{eqt}: 
 in section~\ref{sectionExamples} we give an example 
 of  a $3 \times 3$ systems in space dimension $d=2$, whose symbol
 $L(x, \tau, \xi)$ is strongly hyperbolic uniformly in $x$, and such that 
 the Cauchy problem \eqref{eqt} is ill posed, even locally and with $C^\infty $ data.

 On the side of sufficient conditions, let us first recall 
 the following result:
 \begin{theo}
\label{L2WPLipold}
Suppose that  the coefficients  $A_j \in W^{1, \infty} ([0, T] \times \RR^{d})$ and  there exists a microlocal 
symmetrizer $S$, homogenenous of degree $0 $ and $C^\infty$in $\xi \ne 0$ which satisfies 
$\D_{t,x}^\beta \D_\xi^\alpha S \in L^\infty ([0, T] \times \RR^d \times S^{d-1})$ for all 
$\alpha \in \NN^d$ and all $| \beta | \le 1$. 
    Then, there are constants 
$C$ and $\gamma$ such that  for all 
  $u_0 \in L^2(\RR^d)$ and $f \in L^1([0, T] \times \RR^d)$, the Cauchy problem 
\eqref{eqt}  has a unique solution 
$u \in C^0([0, T] ; L^2(\RR^d))$ which satisfies  
\begin{equation} 
\label{estim2b}
  \big\|  u (t ) \big\|_{L^2(\RR^d)}   \le C e^{ \gamma_0 t}  \big\|  u _0 \big\|_{L^2(\RR^d)}
  + C \int_0^t e^{ \gamma_0 (t-s)}   \big\| L    u  (s) \big\|_{L^2(\RR^d)} ds. 
\end{equation}
\end{theo}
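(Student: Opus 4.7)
The plan is to derive the energy estimate by paradifferential symmetrization, and then to deduce existence and uniqueness from the estimate together with a dual estimate in the standard way. Concretely, I would first replace the coefficients in $L$ by their Bony paraproducts: write $A_j u = T_{A_j} u + R_j u$ where, since $A_j \in W^{1,\infty}$, the remainder $R_j$ maps $L^2$ into $H^{1/2}$ (hence, combined with $\D_{x_j}$, acts as a bounded operator on $L^2$). The problem is thereby reduced to the paralinearized system $T_{A_0}\D_t u + \sum_j T_{A_j}\D_{x_j} u + R u = f + \text{harmless}$, where $R$ is bounded on $L^2$.

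The next step is to apply the paradifferential multiplier $T_S$ on the left, with $S$ the given microlocal symmetrizer. The key symbolic calculus facts I need are: (i) $T_S T_{A_0} = T_{S A_0} + R_0$ with $R_0$ bounded on $L^2$, and (ii) $T_S \sum_j T_{A_j} \D_{x_j} = T_{S A} + R_1$ with $R_1$ bounded on $L^2$, where $A(t,x,\xi) = \sum_j A_j(t,x)\xi_j$. These composition formulas hold with gain of one derivative in $(t,x)$ because both $S$ and the $A_j$ are Lipschitz in $(t,x)$ and $S$ is $C^\infty$ in $\xi$; this is the point where the Lipschitz hypotheses on both factors is used. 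By Definition~\ref{defmicrosym}, the symbols $SA_0$ and $SA$ are hermitian, so $T_{SA_0}$ and $T_{SA}$ are self-adjoint up to $L^2$-bounded operators.

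I would then form the scalar product $2\re (T_S f, u)_{L^2} = 2\re(T_S L u, u)$ and rewrite it, via the paralinearized equation and the self-adjointness mod lower order, as
\begin{equation*}
\frac{d}{dt}\,(T_{SA_0}u, u)_{L^2} + \big((\D_t T_{SA_0})u, u\big)_{L^2} + O\big(\|u\|_{L^2}^2\big).
\end{equation*}
Because $S$ and $A_0$ are Lipschitz in $t$, the time-derivative term is controlled by $\|u\|_{L^2}^2$. Moreover, since $SA_0$ is uniformly positive definite, the sharp G\aa{}rding inequality applied to $T_{SA_0}$ gives $(T_{SA_0}u,u)_{L^2} \ge c\|u\|_{L^2}^2 - C\|u\|_{H^{-1/2}}^2$, and the $H^{-1/2}$ remainder is absorbed by interpolation. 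Gronwall's lemma then produces the estimate \eqref{estim2b} for smooth $u$.

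The main obstacle is the precise tracking of the paradifferential remainders, since the symmetrizer has only Lipschitz regularity in $(t,x)$: one must be sure that every composition, adjoint, and G\aa{}rding remainder is bounded on $L^2$ (and not merely on some weaker space), which pins down exactly why the Lipschitz hypothesis is needed on both $S$ and the $A_j$ and not less. Once the a priori estimate \eqref{estim2b} is established on smooth functions, existence of a solution in $C^0([0,T];L^2)$ follows by running the same estimate on the adjoint operator $L^*$ (which admits $S^*$ as a symmetrizer with the same regularity) and invoking the Hahn--Banach/duality construction of weak solutions; uniqueness is immediate from \eqref{estim2b}, and continuity in time from a standard density/regularization argument.
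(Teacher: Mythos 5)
Your proposal follows exactly the paradifferential route that the paper itself points to for this theorem: the paper does not spell out a proof of Theorem~\ref{L2WPLipold} but defers to \cite{Lax2} (pseudodifferential, smooth coefficients) and \cite{Met} (paradifferential, Lipschitz coefficients), which is precisely the argument you reconstruct. One small technical slip worth correcting: the G\aa{}rding remainder $\|u\|_{H^{-1/2}}^2$ is not ``absorbed by interpolation'' (interpolation cannot make it small relative to $\|u\|_{L^2}^2$); rather, since $\|u\|_{H^{-1/2}}\le\|u\|_{L^2}$ it is simply an $O(\|u\|_{L^2}^2)$ term that feeds into the Gronwall constant, and positivity of the energy functional is secured either by modifying $S$ at low frequencies or by adding a large multiple of $\|u\|_{L^2}^2$ to the quadratic form before running Gronwall.
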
 
 
 When the symmetrizer does not depend on $\xi$, this is Friedrichs theory, in which case 
 the estimate \eqref{estim2b} is easily obtained by forming  the real part of the scalar product of $S Lu$
  with $u$ and performing integrations by parts. 
  For microlocal symmetrizers, one replaces the multiplication by  $S $ by  the action of the 
  pseudodifferential operator 
  $S(t,x, D_x)  $ (\cite{Lax2}) when the coefficients are also smooth in $x$, or by a paradifferential version 
  when the coefficients are Lipschitz (see eg \cite{Met}). This theorems applies to hyperbolic systems with constant multiplicities   which admit   smooth symmetrizers. 
Indeed,   multiple eigenvalues of $A(t,x, \xi)$ with variable multiplicities are the 
main difficulty  for the  construction of smooth symmetrizers. However, 
we note in Proposition~\ref{theodouble}  that the theorem above applies to 
strongly hyperbolic systems which have only generic double eigenvalues. 
  
  The first main result of the paper extends this result to Lipshitz symmetrizers, 
  using a Wick quantization of the symbols. 
  
 \begin{theo}
\label{L2WPLip}
Suppose that  the coefficients  $A_j \in W^{2, \infty} (\RR^{d+1})$ and  there exists a microlocal 
symmetrizer, homogenenous of degree $0 $ in $\xi \ne 0$ and Lipschtiz 
continuous in $(t, x, \xi)$ on $\RR^{d+1} \times S^{d-1}$. Then, there are constants 
$C$ and $\gamma$ such that  for all 
  $u_0 \in L^2(\RR^d)$ and $f \in L^1([0, T] \times \RR^d)$, the Cauchy problem 
\eqref{eqt}  has a unique solution 
$u \in C^0([0, T] ; L^2(\RR^d))$ which satisfies \eqref{estim2b}. 
\end{theo}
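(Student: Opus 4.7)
The strategy is to follow the Friedrichs symmetrizer argument of Theorem~\ref{L2WPLipold}, but to quantize the symmetrizer $S$ by the Wick (Berezin--Toeplitz, anti-Wick) rule rather than by the standard or paradifferential calculus. The virtue of Wick quantization is twofold: a positive symbol produces a positive operator, and the composition and commutator calculi are controlled by $L^\infty$ norms of \emph{first} derivatives of the symbols, which is precisely the regularity available here.

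After multiplying \eqref{eqt} by $A_0^{-1}$ we may assume $A_0 = \Id$; this preserves the $W^{2,\infty}$ regularity of the coefficients and transforms $S$ into a symbol that is symmetric positive definite uniformly in $(t,x,\xi)$ with $SA(t,x,\xi)$ symmetric, of the same Lipschitz regularity. Truncating $S$ near $\xi=0$ and extending it smoothly there introduces only an $L^2$-bounded operator to be absorbed in $B$. Denote by $S^{\mathrm{Wick}}(t)$ the Wick quantization at time $t$: for bounded Lipschitz $S$ it is bounded and self-adjoint on $L^2(\RR^d;\CC^N)$, and uniform positivity $S\ge c\,\Id$ gives the operator inequality $S^{\mathrm{Wick}}(t)\ge c'\,\Id$. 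The energy
\[
E(t) := \big\langle S^{\mathrm{Wick}}(t)\,u(t),\,u(t)\big\rangle_{L^2}
\]
is therefore equivalent to $\|u(t)\|_{L^2}^2$. Differentiating in $t$ and replacing $\partial_t u$ by $f-\sum_j A_j\,\partial_{x_j}u-Bu$, the only delicate contribution is
\[
-2\,\re\sum_{j=1}^d \big\langle S^{\mathrm{Wick}}(t)\,A_j(t,x)\,\partial_{x_j}u,\,u\big\rangle.
\]

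The heart of the argument is the identity
\[
\sum_{j=1}^d S^{\mathrm{Wick}}(t)\,A_j(t,x)\,\partial_{x_j} \;=\; (i\,SA)^{\mathrm{Wick}}(t)\;+\;R(t),\qquad \|R(t)\|_{L^2\to L^2}\le C,
\]
uniformly in $t$. This is obtained in two Wick-calculus steps: first, the differential operator $\sum_j A_j\,\partial_j$ is identified with the Wick quantization of the first order symbol $iA(t,x,\xi)=i\sum_j\xi_j A_j(t,x)$ modulo an $L^2$-bounded error, the extra regularity $A_j\in W^{2,\infty}$ being used precisely to control the second $x$-derivatives that appear in the Wick remainder formula; second, for $a$ bounded and Lipschitz in $(x,\xi)$ and $b$ a first order symbol with Lipschitz symbol-class seminorms, the error $a^{\mathrm{Wick}}b^{\mathrm{Wick}}-(ab)^{\mathrm{Wick}}$ is $L^2$-bounded with norm controlled by the product of first-derivative norms of $a$ and $b$. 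Since $SA$ is Hermitian symmetric, $(iSA)^{\mathrm{Wick}}$ is skew-adjoint, so its real diagonal expectation vanishes; combined with $\partial_t S\in L^\infty$ and trivial bounds on the $f$ and $Bu$ contributions this yields
\[
\frac{dE}{dt}\;\le\; C\big(E(t)+\|Lu(t)\|_{L^2}\,\|u(t)\|_{L^2}\big).
\]
Gronwall's lemma then gives \eqref{estim2b} for smooth compactly supported $u$. Existence and uniqueness follow by the standard Hahn--Banach duality argument applied to the adjoint $L^*$, whose principal symbol admits a Lipschitz symmetrizer of the same regularity derived from $S$.

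The principal obstacle is the Wick composition estimate with only Lipschitz symbols. Classical pseudodifferential calculus requires $C^\infty$ smoothness in $\xi$, and classical paradifferential calculus requires a Littlewood--Paley decomposition of $S$ that conflicts with its pointwise positivity; the Wick calculus is the tool tailor-made for this situation, but one must verify carefully that the extra regularity $A_j\in W^{2,\infty}$ (one derivative beyond Friedrichs' original theory) is exactly what is needed to absorb the residual terms generated by the translation between $A_j\,\partial_{x_j}$ and the Wick quantization of $i\xi_j A_j$, and to control the Poisson-bracket correction in the composition $S^{\mathrm{Wick}}(iA)^{\mathrm{Wick}}$.
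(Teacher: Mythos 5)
Your overall strategy — quantize the Lipschitz symmetrizer by a Wick (anti-Wick) rule rather than the standard or paradifferential calculus, build a Friedrichs energy, and use duality for existence — is in the same spirit as the paper. But there is a genuine gap at the key composition step, and it is not a technicality.

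A \emph{single} fixed-scale anti-Wick quantization $S^{\mathrm{Wick}}$ of a homogeneous degree-zero symbol $S(x,\xi)$ cannot compose with a first-order operator $A(x,D)=\sum A_j\partial_{x_j}$ to give a bounded remainder. Your claimed estimate, that
$a^{\mathrm{Wick}}b^{\mathrm{Wick}}-(ab)^{\mathrm{Wick}}$ is $L^2$-bounded ``with norm controlled by the product of first-derivative norms of $a$ and $b$,'' is false as stated when $b$ has order one: the quantity $\nabla_x b=\sum_j\xi_j\nabla_x A_j$ grows linearly in $|\xi|$, so that first-derivative norm is infinite. The Poisson bracket $\{S,iA\}$ is indeed bounded because $|\nabla_\xi S|\sim|\xi|^{-1}$ compensates $|\nabla_x(iA)|\sim|\xi|$, but the anti-Wick composition remainder is not a Poisson bracket; at a fixed Gaussian scale it contains $(\nabla_x a)\cdot(\nabla_x b)$-type contributions that scale like $|\xi|$ and are not controlled. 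Concretely, using $a^{\mathrm{Wick}}b^{\mathrm{Wick}}u-(ab)^{\mathrm{Wick}}u=\int\!\!\int a(Y)\big(b(Z)-b(Y)\big)\langle\phi_Y|\phi_Z\rangle\langle\phi_Z,u\rangle\phi_Y\,dY\,dZ$, the kernel is bounded only by $\|a\|_\infty\|\nabla b\|_\infty$, which is infinite here.

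What the paper actually does is a \emph{dyadic, frequency-adapted} Wick quantization: it introduces the wave-packet transform $W_\lambda$ of \eqref{defWB} with Gaussian width $\lambda^{-1/2}$ in $x$, performs a Littlewood--Paley decomposition $u=\sum_j\Theta_j u$, and on the $j$-th block works with $W_{2^j}$ \emph{together with} the cut-off $\vp_\lambda S$ supported in $|\xi|\lesssim\lambda$. The energy is $\cE_t(u)=\sum_j (S\,W_{2^j}\Theta_j u,W_{2^j}\Theta_j u)$, a sum of scale-$2^j$ Wick forms, not a single quantization. It is precisely the conjunction of the cut-off ($|\xi|\lesssim\lambda$, hence $\xi_j/\lambda$ bounded) with the homogeneity of $S$ ($|\partial_\xi S|\sim|\xi|^{-1}\sim\lambda^{-1}$ on the cut-off) that makes the remainders $R_j^1,R_j^2$ in Proposition~\ref{mainestloc} uniformly $L^2$-bounded via Lemma~\ref{action1}; the $W^{2,\infty}$ regularity of the $A_j$ enters exactly as you say, via a Taylor step $A_j^*(x)-A_j^*(z)=\sum_k(x_k-z_k)\tilde A_{j,k}(x,z)$ followed by integration by parts. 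Without the dyadic scale adaptation and the associated cut-off, your key identity $\sum_j S^{\mathrm{Wick}}A_j\partial_{x_j}=(iSA)^{\mathrm{Wick}}+R$ with $\|R\|$ bounded is unjustified, and the energy estimate does not close.
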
 
  
This theorem is proved in Section~\ref{sectionL2WP}.  In  Section~\ref{sectionExamples} we discuss the existence 
of  Lipschitz symmetrizers. In particular, we give examples of systems which admit a Lipschitz  symmetrizer 
but no $C^1$ symmetrizer. We also prove that the Lipschitz  condition is sharp, 
  in the sense that for all $\mu< 1$, there are examples of systems admitting H\"older continuous symmetrizers of order $\mu< 1$, for which the Cauchy problem with  $C^\infty$ data is  is locally ill posed.

\bigbreak
The second part of the paper is concerned with the 
local theory of the Cauchy problem and the finite speed propagation property 
for the support of the solutions. 
A classical proof of this  property relies on the invariance of the assumptions 
by changes of time variables, so that one convexify the initial surface, 
The existence of a local symmetrizer is clearly invariant by change of time,  
  as well as   strict hyperbolicity  or the property that the characteristic variety is smooth 
   with constant multiplicities. In all this case the local theory was well established. 
   This invariance is not clear for the existence of smooth microlocal symmetrizers. 
   However, when there are smooth symmetrizers,     local uniqueness and  finite speed of propagation 
 are proved in \cite{Rauch1}
using  another approach bases on finite difference approximation schemes and uniform estimates due to 
\cite{Lax-Nir, Vaillancourt}. 

The second main theorem of this paper asserts that the  existence 
of a Lipschitz symmetrizer [resp. $C^\infty$]  is preserved by change of timelike directions.  
This is  a key step for establishing a local theory, starting with local uniqueness, 
finite speed of propagation and endind with the sharp description of the
propagation of support  as stated in \cite{JMR1, Rauch0, Rauch1}. 

Let $\tilde x $ denote the space-time variables  $(t, x)$ and set accordingly
$\tilde \xi = (\tau, \xi)$ Assuming that $L_1(\tilde x , \tilde \xi) $ is hyperbolic in the time direction 
$(1,0) \in \RR^{1+d}$, denote by $\Gamma_{\tilde x}$ the cone of hyperbolic directions 
that is the component of $(1,0)$ in $\{ \tilde \xi : \det L_1(\tilde x,  \tilde \xi) \ne 0\} $.

 \begin{theo}
 \label{invt}
 Suppose that the coefficients   $A_j $ are Lipschitz continuous [resp $C^\infty$]  on $\RR^{d+1}$ and that there exists a microlocal 
symmetrizer $S(t, x, \xi)$, homogenenous of degree $0 $ in $\xi \ne 0$ and Lipschtiz 
continuous in [resp $C^\infty$] $(t, x, \xi)$ on $\RR^{d+1} \times S^{d-1}$.   
then for any time-like direction $\tilde \nu = \in \Gamma_{t,x}$, the symbol 
$L(t, x, \tilde \nu)^{-1} A(t,x, \xi)$ admits a Lipschitz [resp $C^\infty$] symmetrizer. 
 
\end{theo}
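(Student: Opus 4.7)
The plan is to reduce the statement to the case of a \emph{full} symmetrizer in the sense of \cite{FriLa1}, after which the invariance under change of timelike direction becomes an almost formal combination of a linear--algebra identity and a connectedness argument on the hyperbolic cone $\Gamma_{\tilde x}$.

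First, I would invoke the second main result announced in the introduction --- the equivalence, at a fixed regularity, between the existence of a microlocal symmetrizer and that of a full symmetrizer --- to replace the given $S(\tilde x, \xi)$ by a full symmetrizer $R(\tilde x, \xi)$ of the same smoothness (Lipschitz, respectively $C^\infty$). By definition of a full symmetrizer, $R A_0$ is symmetric and uniformly positive definite, and $R A_j$ is symmetric for every $j = 1, \ldots, d$. By linearity, $R(\tilde x, \xi) L_1(\tilde x, \tilde \eta)$ is then symmetric for \emph{every} $\tilde \eta \in \RR^{1+d}$, not only for $\tilde \eta = (1,0)$.

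Next, fix a timelike direction $\tilde \nu = (\nu_0, \nu') \in \Gamma_{\tilde x}$ and set
\begin{equation*}
\Sigma(\tilde x, \xi) \; := \; R(\tilde x, \xi) \, L_1(\tilde x, \tilde \nu) \; = \; \nu_0 \, R A_0 \; + \; \sum_{j=1}^d \nu'_j \, R A_j.
\end{equation*}
Each summand is symmetric, hence so is $\Sigma$. Since $R A_0$ is positive definite and $A_0 = L_1(\tilde x, 1, 0)$ is invertible (as $(1,0) \in \Gamma_{\tilde x}$), $R$ is invertible; combined with the invertibility of $L_1(\tilde x, \tilde \nu)$ on $\Gamma_{\tilde x}$, this makes $\Sigma$ invertible throughout $\Gamma_{\tilde x}$. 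The map $\tilde \nu \mapsto \Sigma(\tilde x, \xi)$ is a continuous path from the connected open cone $\Gamma_{\tilde x}$ into the invertible symmetric matrices, so its signature is constant on $\Gamma_{\tilde x}$; at $\tilde \nu = (1, 0)$ it reduces to $R A_0 > 0$, and therefore $\Sigma$ is positive definite everywhere on $\Gamma_{\tilde x}$. Uniformity in $(\tilde x, \xi)$ follows from the degree--$0$ homogeneity of $R$ in $\xi$, the boundedness of $R$, and the uniform boundedness of $L_1(\tilde x, \tilde \nu)^{-1}$ in $\tilde x$. Finally,
\begin{equation*}
\Sigma(\tilde x, \xi) \, \bigl[ L_1(\tilde x, \tilde \nu)^{-1} A(\tilde x, \xi) \bigr] \; = \; R(\tilde x, \xi) \, A(\tilde x, \xi) \; = \; \sum_{j=1}^d \xi_j \, R A_j
\end{equation*}
is symmetric, so $\Sigma$ is a symmetrizer for $M(\tilde x, \xi) := L_1(\tilde x, \tilde \nu)^{-1} A(\tilde x, \xi)$, with regularity inherited from $R$, as required.

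The entire weight of the argument is carried by the first step: the equivalence between microlocal and full symmetrizers at a prescribed smoothness is the substantive content of the paper and is the genuine obstacle. Once it is granted, the invariance under change of timelike direction is essentially formal, relying only on the fact that a full symmetrizer simultaneously symmetrizes all the $A_j$, together with the classical observation that the signature of a continuously varying invertible symmetric matrix is constant on any connected set of parameters, applied here to the hyperbolic cone $\Gamma_{\tilde x}$.
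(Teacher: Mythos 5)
Your proof rests on the claim that a full symmetrizer $R$ satisfies ``$RA_0$ symmetric and $RA_j$ symmetric for every $j$,'' so that $R\, L_1(\tilde x, \tilde\eta)$ is symmetric for \emph{every} $\tilde\eta$. This is a misreading of the definition, and it is the point where the argument collapses. A full symmetrizer $\widetilde S(t,x,\tau,\xi)$ depends on the \emph{full} dual variable $(\tau,\xi)$, and the symmetry condition is that $\widetilde S(t,x,\tau,\xi)\, L(t,x,\tau,\xi)$ is symmetric for each $(\tau,\xi)$ --- a coupled condition in which symmetrizer and symbol are evaluated at the \emph{same} $(\tau,\xi)$. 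Since $\widetilde S$ itself varies with $(\tau,\xi)$, this cannot be polarized to conclude that $\widetilde S A_j$ is symmetric for each $j$ separately. Were such an $R$ to exist, freezing $\xi$ at any $\xi_0$ would produce a Friedrichs symmetrizer $R(t,x,\xi_0)$ with all $RA_j$ symmetric, rendering the entire microlocal theory vacuous; this already fails for strictly hyperbolic systems that are not Friedrichs-symmetric.

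What is true, and is proved in Proposition~\ref{propextpos}, is far weaker: applying the symmetry of $\bS(\tilde\xi+s\tilde\eta)\,L(\tilde\xi+s\tilde\eta)$ to vectors in $\ker L(\tilde\xi)$ and letting $s\to 0$ shows that $\bS(\tilde\xi)\,L(\tilde\eta)$ is symmetric \emph{only on} $\ker L(\tilde\xi)$, not on all of $\CC^N$. Consequently your candidate $\Sigma=R\,L_1(\tilde\nu)$ is not a symmetric matrix and cannot be a symmetrizer. The actual construction (Theorem~\ref{theomajmod}) goes through the spectral decomposition $S(a)=\sum_{\tau\in\Sigma(a)}\Pi(\tau,a)^*\,\bS(\tau,a)\,\Pi(\tau,a)$, and the genuinely hard step --- precisely what makes the Lipschitz case nontrivial --- is proving that this sum retains the Lipschitz (or $C^\infty$) regularity, which is the content of the functional-calculus Theorem~\ref{theoFC} and Lemma~\ref{mainlem}. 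Your signature-constancy observation is sound and close in spirit to Proposition~\ref{positivcone}, but it must be applied to the restriction of $\bS\,L(\tilde\nu)$ to each kernel $\ker L(\tilde\xi)$, not to the matrix product globally.
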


\begin{cor}
Under the assumptions of Theorem~$\ref{L2WPLip}$, the Cauchy problem for $L$ with 
initial data on any space like hyperplane is well posed in $L^2$. 
 
\end{cor}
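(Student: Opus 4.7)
The plan is to reduce the corollary to Theorem~\ref{L2WPLip} via an affine change of variables that straightens the spacelike hyperplane into $\{t'=0\}$, and to use Theorem~\ref{invt} to transport the Lipschitz symmetrizer hypothesis to the new time direction.

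Concretely, let $H$ be the given spacelike hyperplane with conormal $\tilde\nu \in \Gamma_{\tilde x}$ (a fixed covector since $H$ is affine). After an affine translation I may assume $H = \{\tilde x : \tilde\nu \cdot \tilde x = 0\}$. I pick a constant invertible linear change of variables $\tilde x \mapsto (t', x')$ with $t' = \tilde\nu \cdot \tilde x$ and $x'$ parametrizing $H$; write the dual coordinates as $(\tau', \xi')$, and let $\tilde\eta_j \in \RR^{1+d}$ be the images of the new space covectors. In the new coordinates the operator reads
\begin{equation*}
L = A'_0(\tilde x)\,\D_{t'} + \sum_{j=1}^d A'_j(\tilde x)\,\D_{x'_j} + B(\tilde x),
\qquad A'_0 = L_1(\tilde x, \tilde\nu), \quad A'_j = L_1(\tilde x, \tilde\eta_j),
\end{equation*}
and the linearity of the change preserves $W^{2,\infty}$ regularity, so the coefficient hypothesis of Theorem~\ref{L2WPLip} is automatic. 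Because $\tilde\nu \in \Gamma_{\tilde x}$ globally (with uniform bounds, as $H$ is a fixed spacelike plane), $A'_0 = L_1(\tilde x, \tilde\nu)$ is uniformly invertible with bounded inverse on $\RR^{1+d}$.

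Next I invoke Theorem~\ref{invt}: in the original coordinates there is a Lipschitz microlocal symmetrizer, hence for the timelike direction $\tilde\nu$ the symbol $L_1(\tilde x, \tilde\nu)^{-1} A'(\tilde x, \xi')$ (with $A'(\tilde x, \xi') = \sum_j \xi'_j A'_j$) admits a Lipschitz symmetrizer $\Sigma(\tilde x, \xi')$. Setting $S'(\tilde x, \xi') := \Sigma(\tilde x, \xi')\,L_1(\tilde x, \tilde\nu)^{-1}$ gives a Lipschitz, degree-zero microlocal symmetrizer for the transformed system in the sense of Definition~\ref{defmicrosym}: indeed $S' A'_0 = \Sigma$ is symmetric and uniformly positive definite, while $S' A'(\tilde x, \xi') = \Sigma \cdot L_1(\tilde x, \tilde\nu)^{-1} A'(\tilde x, \xi')$ is symmetric by the defining property of $\Sigma$. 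Smoothness in $\xi'$ and Lipschitz regularity in $(\tilde x, \xi')$ are preserved since $L_1(\cdot, \tilde\nu)^{-1}$ is Lipschitz in $\tilde x$ and does not depend on $\xi'$.

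With these properties in hand, Theorem~\ref{L2WPLip} applies directly to $L$ written in the $(t', x')$ coordinates, yielding the existence, uniqueness and $L^2$ energy estimate for the Cauchy problem with data on $\{t'=0\}$, which is exactly $H$ in the original coordinates. The only nontrivial work is the transport of the symmetrizer, and this is entirely handed to us by Theorem~\ref{invt}; the main obstacle of the corollary is therefore subsumed in that earlier theorem, and no further regularity or compatibility checks beyond the routine ones above are needed.
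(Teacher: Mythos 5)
Your argument is correct and coincides with the proof the paper intends but leaves implicit: straighten the spacelike hyperplane by a constant linear change of variables, observe this preserves the $W^{2,\infty}$ coefficient hypothesis and makes $A'_0 = L_1(\tilde x,\tilde\nu)$ uniformly invertible, use Theorem~\ref{invt} to transport the Lipschitz symmetrizer to the new time direction, and then apply Theorem~\ref{L2WPLip} in the new coordinates. The only step you glide over is identifying the symmetrizer produced by Theorem~\ref{invt} (a function of $\xi$ on the fixed transversal $\EE$) with one depending on the new spatial covariables $\xi'$; this is a routine linear change of the $\xi$-argument which preserves homogeneity and Lipschitz regularity, and does not affect the conclusion.
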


 As said above, together with Theorems~\ref{L2WPLipold} and \ref{L2WPLip}, this implies 
 local uniqueness, and finite speed of propagation.  Together with the Lipschitz dependence 
 of the cone of propagation implied by Proposition~\ref{proplipvp}, this allows to apply 
 the results on the precise propagation of support stated in  \cite{JMR1, Rauch0, Rauch1}. 
 We refer the reader to  these papers for precise statements.

The proof of this theorem is based on an intrinsic characterization  of the existence of 
  Lipschitz symmetrizers which uses the notion of {\sl full symmetrizers} introduced by 
  K.O.Friedrichs and P.Lax   \cite{FriLa1}: 
  
\begin{defi}
 A full symmetrizer for \eqref{eqt} is a  bounded  
 matrix $\widetilde S(t, x, \tau, \xi)$, homogeneous of degree $0$ in $(\tau, \xi)\ne 0 $,  such that  
 $\widetilde S(t, x, \tau, \xi) L(t, x,\tau,  \xi) $
  is symmmetric. 
  
  $\widetilde S$ is said to be positive in the direction $\tilde \nu$,  if
  $ \re \widetilde S(t, x, \tau, \xi) L(t, x, \tilde \nu)$ is definite positive 
  on $\ker L(t, x, \tau, \xi)$ for all $(t,x, \tau, \xi)$. 
\end{defi} 
Of course the condition is nontrivial only near characteristic points, but says nothing about 
hyperbolicity.  Our third main theorem is the following;

\begin{theo}
Suppose that $L$ is hyberpolic in  the time direction. Then, $L$ admits a continuous [resp. Lipschitz]
microlocal symmetrizer $S(t,x, \xi)$, if and only if it admits a continuous [resp. Lipschitz]
full  symmetrizer $\widetilde S (t, x, \tau, \xi)$ which is positive in the time direction. 

In this case, the    $\widetilde S$ is  positive in an direction of hyperbolicity $\tilde \nu$.

\end{theo}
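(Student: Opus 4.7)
The plan is to establish the two implications separately and then deduce the final positivity statement.

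\textbf{Forward direction (microlocal $\Rightarrow$ full positive in time).} Starting from a microlocal symmetrizer $S(t,x,\xi)$, the key observation is that $S$ is already a full symmetrizer on the open set $\{\xi\ne 0\}$, since $SL=\tau(SA_0)+SA(\xi)$ is a sum of two Hermitian matrices. The only obstruction to globalizing is that $S(\xi/|\xi|)$ has no continuous limit at the two poles $\tilde\xi=(\pm 1,0)$ of the sphere of directions. I would resolve this by setting $\widetilde S(t,x,\tau,\xi):=\chi(\tilde\xi)\,S(t,x,\xi)$, where $\chi$ is a smooth degree-$0$ cutoff vanishing to first order at the two poles and identically $1$ on a conic neighborhood of the characteristic variety; such a $\chi$ exists because hyperbolicity in time forces $|\tau|\le C|\xi|$ on $\{\det L=0\}$, keeping that variety away from the poles. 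Then $\widetilde SL=\chi(SL)$ remains Hermitian, and the first-order vanishing of $\chi$ at the poles compensates the $|\xi|^{-1}$ angular blow-up of $\na_\xi S(\xi/|\xi|)$ near $\xi=0$, so Lipschitz regularity transfers. Positivity in the time direction is automatic: on the characteristic variety $\chi=1$ so $\re\widetilde S A_0=SA_0$ is positive definite on all of $\CC^N$; off the characteristic variety $\ker L=\{0\}$ and the condition is vacuous.

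\textbf{Converse (full positive in time $\Rightarrow$ microlocal).} Given $\widetilde S$, the candidate $S(t,x,\xi):=\widetilde S(t,x,0,\xi)$ immediately makes $SA(\xi)=\widetilde S(0,\xi)L(0,\xi)$ Hermitian; the remaining task is to produce an $S$ with $SA_0$ Hermitian positive definite. The positivity hypothesis endows, at each characteristic sheet $\tau=\tau_j(\xi)$, the space $\ker L(\tau_j(\xi),\xi)$ with a positive Hermitian form; assembling these forms through the spectral decomposition of $\tilde A(\xi):=A_0^{-1}A(\xi)$ yields a Hermitian $H(\xi)>0$ with respect to which $\tilde A(\xi)$ is self-adjoint, and the microlocal symmetrizer is $S(\xi):=H(\xi)A_0^{-1}$. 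The main obstacle, and the hard part of the theorem, is preserving the Lipschitz regularity of $\widetilde S$ through this assembly, since the spectral projectors of $\tilde A(\xi)$ are generically non-Lipschitz at eigenvalue crossings. The workaround is to realize $S(\xi)$ as an algebraic functional of $\widetilde S$ that avoids explicit reference to individual projectors---for instance a contour-integral expression integrating $\widetilde S(\tau,\xi)$ against a resolvent-type weight over a contour in the $\tau$-plane enclosing the strip $|\tau|\le C|\xi|$ where all characteristic values lie---so that the Lipschitz norm of $\widetilde S$ directly controls that of $S$.

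\textbf{Positivity in any hyperbolic direction.} For the $\widetilde S$ produced by the forward construction, positivity in $\tilde\nu\in\Gamma_{\tilde x}$ is checked directly: at a characteristic point $(\tau,\xi)$ with $v\in\ker L(\tau,\xi)$, $\widetilde S$ coincides with $S$, and using the $H$-inner product with $H=SA_0>0$, in which $\tilde A(\nu')$ is self-adjoint with real eigenvalues $-\tau_j(\nu')$, one finds $\re\langle SL(\tilde\nu)v,v\rangle = \nu_0\|v\|_H^2+\langle\tilde A(\nu')v,v\rangle_H \ge (\nu_0-\max_j\tau_j(\nu'))\|v\|_H^2>0$, using that $\Gamma_{\tilde x}$ is exactly $\{\nu_0>\max_j\tau_j(\nu')\}$. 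For an arbitrary $\widetilde S$ satisfying the hypotheses, the same conclusion follows from a G{\aa}rding convexity argument applied to the real linear functional $\tilde\nu\mapsto\re\langle\widetilde S(\tau,\xi)L(\tilde\nu)v,v\rangle$, which vanishes at $\tilde\nu=(\tau,\xi)$ and is positive at the time direction, hence is forced to remain positive throughout the convex hyperbolic cone $\Gamma_{\tilde x}$.
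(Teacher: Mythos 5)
Your forward direction is sound and takes a slightly different route from the paper: the paper (Remark~\ref{remdefsym}) keeps $\widetilde S$ nondegenerate near the axis $\RR\nu$ by replacing $S$ there with a matrix making $\bS(\nu)L(\nu)$ definite positive and interpolating, whereas you simply cut $S$ off to vanish at the poles. Since the positivity requirement lives only on $\ker L(\tilde\xi)$, which is trivial near $\RR\nu$, and the degree-one vanishing of $\chi$ offsets the $O(1/\theta)$ angular blow-up of $\nabla_{\tilde\xi} S$, your variant does deliver a Lipschitz full symmetrizer positive in the time direction.

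The converse direction is where the proposal has a genuine gap. You correctly identify that the naive candidate $\sum_\lambda \Pi_\lambda^*\bS(\lambda,\cdot)\Pi_\lambda$ is destroyed by the non-Lipschitz projectors, and you propose to bypass this by a contour-integral functional of $\widetilde S$. But an integral of the form $\frac{1}{2\pi i}\oint_\Gamma \widetilde S(z,\cdot)\,(z\Id - A)^{-1}\,dz$ equals $\sum_\lambda \bS(\lambda,\cdot)\Pi_\lambda$ only when $\widetilde S(\cdot,\xi)$ is holomorphic in $\tau$, which it is not; for a merely Lipschitz $\widetilde S$ the contour integral computes something else entirely, and deforming the contour to hug the (coalescing) eigenvalues makes the Lipschitz constant blow up. The paper's actual mechanism has two ingredients you are missing. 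First, the algebraic identity $\Pi_\lambda^*\,\bS(\lambda)\,J\,\Pi_\lambda = \Pi_\lambda^*\,\bS(\lambda)\,J$ from Proposition~\ref{propPSP}, which collapses the double-projector sum to the single-projector expression $S = \sum_\lambda \bS^*(\lambda)\Pi_\lambda$, a genuine functional calculus $S_A$ of the Lipschitz-in-$\lambda$ symbol $\bS^*$. Second, and this is the real heart of the theorem, the regularity of $S_A$ for merely Lipschitz data is not obtained by any resolvent formula; it is Lemma~\ref{mainlem}, a combinatorial induction on eigenvalue clusters which shows that $\sum_j S_j\big(\Pi_j(a)-\Pi_j(\underline a)\big)$ is $O(|a-\underline a|)$ even though the individual increments of the $\Pi_j$ are only $O(|a-\underline a|/\delta_j)$ with $\delta_j$ the spectral gap. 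Without that cancellation lemma (or a substitute), the Lipschitz transfer does not follow.

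The final statement, that $\widetilde S$ is positive in every direction $\tilde\nu\in\Gamma$, is also not a consequence of convexity of $\Gamma$ together with one zero and one positive value of the linear functional $\tilde\nu\mapsto\re\langle\widetilde S(\tilde\xi)L(\tilde\nu)v,v\rangle$: a hyperplane passing through a characteristic point and containing $\{f_v=0\}$ can in principle meet an open convex cone. What actually prevents this is that the restriction of $\bS L(\nu_t)$ to $\ker L(\tilde\xi)$ is symmetric for all $t$ along the segment $\nu_t$, and it cannot become degenerate because a nontrivial kernel would force, via the identity $\bS f\in(\ker L)^\perp \Leftrightarrow f\in\range L$, a vector in $\ker(L(\nu_t)^{-1}L)\cap \range(L(\nu_t)^{-1}L)$, contradicting semisimplicity of the zero eigenvalue — which holds precisely because $\nu_t$ stays in a direction of strong hyperbolicity. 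This is Proposition~\ref{positivcone} in the paper; your appeal to ``G\aa rding convexity'' does not capture this nondegeneracy mechanism, and as stated the argument is incomplete.
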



\section{Lipschitz symmetrizability is sufficient for the $L^2$ well posedness}
\label{sectionL2WP}

 The goal of this section is to prove Theorems~\ref{L2WPLip}. 
 We consider a system 
  \begin{equation}
 \label{geq}
 L u = \sum_{j = 0}^d A_j ( \tilde x) \D_{\tilde x_j} u     
 \end{equation} 
  with coefficients $A_j$  which are at least $W^{1, \infty} ([0; T] \times \RR^d)$.

\subsection{Wave packets and localization} 
 
  For $u \in L^2 (\RR^s)$, $\lambda> 0$  and $B \in L^\infty (\RR^d \times \RR^d)$, let
 \begin{equation}
 \label{defWB}
 W_{\lambda, B}  u (x, \xi) =    \frac{1}{ (2 \pi ) ^{ \frac{ d}{2}  } }
\left(\frac{ \lambda}{\pi} \right)^{  \frac{d}{4}}   \int e^{ i (x - y) \xi  - \mez \lambda | x- y|^2}
 B(x, y) u (y) dy 
 \end{equation}

 \begin{lem}
 \label{action1} 
  The operator $W_{\lambda, B}$ is bounded from $L^2(\RR^d)$ to 
  $L^2(\RR^{2d})$ and 
 \begin{equation}
 \big\| W_{\lambda, B}  u  \big\|_{L^2(\RR^d \times \RR^d)}  
 \le   \big\| B \big\|_{L^\infty}    \big\| u \big\|_{L^2 (\RR^d)}. 
 \end{equation}
 Moreover, if $ B (x, z) \equiv \Id$,   $W_\lambda := W_{\lambda, \Id} $ is isometric from 
 $L^2(\RR^d)$ into $L^2(\RR^d \times \RR^d)$. 
 \end{lem}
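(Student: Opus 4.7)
The plan is to reduce the computation of $\|W_{\lambda,B}u\|_{L^2(\RR^{2d})}^2$ to Plancherel's theorem in the $\xi$ variable with $x$ held fixed, followed by Fubini and the normalization of a Gaussian.

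First, I would fix $x \in \RR^d$ and rewrite
\begin{equation*}
W_{\lambda, B} u(x, \xi) = \frac{1}{(2\pi)^{d/2}}\left(\frac{\lambda}{\pi}\right)^{d/4} e^{ix\cdot\xi} \widehat{g_x}(\xi),
\end{equation*}
where $g_x(y) = e^{-\frac{1}{2}\lambda|x-y|^2} B(x,y) u(y)$ and $\widehat{g_x}$ denotes its Fourier transform in $y$ (with the convention $\widehat{f}(\xi) = \int e^{-iy\cdot\xi}f(y)\,dy$). Here the factor $e^{ix\cdot\xi}$ is a unimodular multiplier that will drop out when taking moduli, and the phase $e^{-iy\cdot\xi}$ comes out as the Fourier variable.

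Next, I apply Plancherel in $\xi$ for fixed $x$: since $\|\widehat{g_x}\|_{L^2(\RR^d_\xi)}^2 = (2\pi)^d \|g_x\|_{L^2(\RR^d_y)}^2$, the constants in front combine to give
\begin{equation*}
\int_{\RR^d} |W_{\lambda,B}u(x,\xi)|^2\,d\xi \;=\; \left(\frac{\lambda}{\pi}\right)^{d/2}\!\int_{\RR^d} e^{-\lambda|x-y|^2}\,|B(x,y)|^2\,|u(y)|^2\,dy.
\end{equation*}
This explains the precise choice of the normalizing constant $\pi^{-d/4}\lambda^{d/4}$ in the definition. I then integrate in $x$, swap the order of integration by Fubini (both integrands are nonnegative, so this is unconditional), and note that
\begin{equation*}
\left(\frac{\lambda}{\pi}\right)^{d/2}\!\int_{\RR^d} e^{-\lambda|x-y|^2}\,dx \;=\; 1,
\end{equation*}
a standard Gaussian computation independent of $y$ and $\lambda$.

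Bounding $|B(x,y)|^2 \le \|B\|_{L^\infty}^2$ inside the remaining integral gives $\|W_{\lambda,B}u\|_{L^2(\RR^{2d})}^2 \le \|B\|_{L^\infty}^2 \|u\|_{L^2(\RR^d)}^2$, which is the claimed boundedness. Specializing to $B \equiv \Id$, the $L^\infty$ bound is an equality, and the same computation yields $\|W_\lambda u\|_{L^2(\RR^{2d})}^2 = \|u\|_{L^2(\RR^d)}^2$, i.e.\ the isometry. There is no real obstacle here; the only subtlety is getting the constants in \eqref{defWB} to match so that the Plancherel factor $(2\pi)^d$ and the Gaussian normalization $(\lambda/\pi)^{d/2}$ cancel cleanly against $(2\pi)^{-d}(\lambda/\pi)^{d/2}$ coming from the definition.
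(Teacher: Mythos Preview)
Your proof is correct and follows essentially the same approach as the paper: both identify $W_{\lambda,B}u(x,\xi)$ as (up to a unimodular phase and the normalizing constant) the Fourier transform in $y$ of $g_x(y)=e^{-\frac{1}{2}\lambda|x-y|^2}B(x,y)u(y)$, apply Plancherel in $\xi$ for fixed $x$, and then integrate out the Gaussian in $x$. The only minor imprecision is that after Plancherel you should have $|B(x,y)u(y)|^2$ rather than $|B(x,y)|^2|u(y)|^2$ (since $B$ is matrix-valued), but this does not affect the bound and the isometry case $B=\Id$ is unaffected.
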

\begin{proof}
Let $\sF$ denote the Fourier transform and 
$ 
v_x (y) = B(x, y)  e^{ - \mez \lambda | x - y |^2}   u (y) . 
$. Then 
$$
W_{\lambda, B} u (x, \xi)  =   \frac{1}{ (2 \pi ) ^{ \frac{ d}{2}  } }
\left(\frac{ \lambda}{\pi} \right)^{  \frac{d}{4}} 
 e^{ i \lambda x \xi}  \sF \big( v_x \big) ( \xi) . 
$$
Therefore 
$$
\begin{aligned}
 \int \big| W_B u (x, \xi) \big|^2  dx d \xi   = 
 \left(\frac{ \lambda}{\pi} \right)^{  \frac{d}{2}}  \int \big| v_x(y) \big|^2 dx dy 
\\ 
= \left(\frac{ \lambda}{\pi} \right)^{  \frac{d}{2}}  
 \int   e^{ - \lambda | x - y |^2}   \big| B(x, y)  &  u (y) \big|^2  dx dy 
 \le \big\| B \big\|^2_{L^\infty} 
    \ \big\| u \big\|^2_{L^2 (\RR^d)}.   
\end{aligned}
$$
When $B = 1$, the inequality is an equality. 
\end{proof}
 
We will adapt the scale $\lambda$ to the size of the 
 frequency $| \xi |$.  One has 
$$
\begin{aligned}
 W_\lambda  u (x, \xi)    =  
     (2 \pi ) ^{ - d  } 
\left(\frac{ 1} {\pi \lambda} \right)^{  \frac{d}{4}}  \int e^{ i x \eta - \frac{1} {2\lambda} | \xi - \eta |^2}     \hat u (\eta) d \eta.
\end{aligned}
$$
 This shows that for a fixed  $\xi$, $W_\lambda u (\, \cdot \, ,  \xi)$  is the inverse Fourier transform of 
 $ 
 w_\xi (\eta) =   \left(\frac{ 1} {\pi \lambda} \right)^{  \frac{d}{4}}  \ e^{  - \frac{1} {2\lambda} | \xi - \eta |^2}     \hat u (\eta) . 
 $ 
 Therefore
 \begin{equation}
 \label{rel2}
 \begin{aligned}
\int  \big|  W_\lambda u (x, \xi) \big|^2 dx  & = (2\pi)^{- d} \int \big| w_\xi (\eta) \big|^2 d \eta 
\\
& =  (2\pi)^{- d}   \left(\frac{ 1} {\pi \lambda} \right)^{  \frac{d}{2}} 
\int   e^{  - \frac{1} { \lambda} | \xi - \eta |^2}    |  \hat u  (\eta) |^2 d \eta. 
 \end{aligned}
 \end{equation}
 Integrating in $\xi$, we recover the isometry of $W_\lambda$,
 but  the important point is that we use \eqref{rel2}   to localize in  
$| \xi |$. 
 
 Consider a dyadic partition of unity
 \begin{equation}
 1 = \vp_0 (\xi)  +  \sum_{j= 1}^\infty \theta_j (\xi) 
 \end{equation}
 with $\vp_0 \in C^\infty_0(\RR^d)$, supported in $\{ | \xi | \le 2 \}$ and equal to one 
 on $\{| \xi | \le 1\}$, 
 $\theta_j (\xi) = \vp_j (\xi) - \vp_{j-1} (\xi)$ and 
 $\vp_j(\xi ) = \vp_0 (2^j \xi)$ for $j \ge 1$. 
 To unify notations, we set $\theta_0 = \vp_0$ and for $j \ge 0$ we denote by 
 define $\Theta_j  $  the operator 
 $$
 \Theta_j u = \sF^{_1} \big( \theta_j \hat u\big),  
 $$
 so that 
 \begin{equation}
u  =   \sum_{j= 0}^\infty \Theta_j  u 
 \end{equation}

 \begin{prop}
 \label{proplocal1}
 For all $n$, $m$ and $\alpha$, there is a constant $C$ such that for all $j \ge 0$
 \begin{equation}
 \big\| | \xi|^m (1 - \vp_{j+2}) W_{2^j} 
 \D_y^\alpha \Theta_j u \big\|_{L^2(\RR^{2d})}  \le  
 C 2^{ -j n}   \big\|  \Theta_j u \big\|_{L^2(\RR^d)} . 
 \end{equation}
 \end{prop}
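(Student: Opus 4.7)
The plan is to exploit the fact that, up to Gaussian tails, $W_{2^j}$ is a frequency smoother of width $\sqrt{2^j}$: applied to $\Theta_j u$, whose Fourier transform is supported in the annulus $|\eta| \lesssim 2^{j+1}$, it produces a phase--space function essentially concentrated in the same annulus in the $\xi$ variable. The cutoff $1-\varphi_{j+2}$ vanishes on $\{|\xi| \le 2^{j+2}\}$, so on its support the distance from $\xi$ to the support of the Fourier transform of $\Theta_j u$ is at least of order $2^j$, and the Gaussian factor buried in the definition \eqref{defWB} of $W_\lambda$ produces super--exponential decay in $j$.

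Concretely, I would start from the identity \eqref{rel2}, applied with $\lambda = 2^j$ to $v := \D_y^\alpha \Theta_j u$. Since $\hat v(\eta) = (i\eta)^\alpha \theta_j(\eta)\, \hat u(\eta)$ is supported in $\{|\eta| \lesssim 2^{j+1}\}$, one has, for every fixed $\xi$,
$$
\int_{\RR^d} \bigl|W_{2^j} v(x,\xi)\bigr|^2 dx \;=\; (2\pi)^{-d} (\pi 2^j)^{-d/2}\!\int e^{-|\xi-\eta|^2/2^j}\,|\eta|^{2|\alpha|}\,\theta_j(\eta)^2\,|\hat u(\eta)|^2\, d\eta.
$$
Multiply by $|\xi|^{2m}(1-\varphi_{j+2}(\xi))^2$ and integrate in $\xi$; Fubini reduces the whole quantity to the study of the scalar kernel
$$
K_j(\eta) \;:=\; \int |\xi|^{2m}\,(1-\varphi_{j+2}(\xi))^2\,e^{-|\xi-\eta|^2/2^j}\, d\xi, \qquad |\eta| \lesssim 2^{j+1}.
$$

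The heart of the matter is the trivial but crucial support separation: on the support of $1-\varphi_{j+2}$ one has $|\xi| \ge 2^{j+2}$, so whenever $|\eta| \le 2^{j+1}$ one has $|\xi-\eta| \ge |\xi|/2$, and hence $e^{-|\xi-\eta|^2/2^j} \le e^{-|\xi|^2/(4\cdot 2^j)}$. Rescaling $\xi = 2^{j/2}\zeta$ transforms $K_j(\eta)$ into
$$
2^{j(m+d/2)}\!\int_{|\zeta|\ge 2^{j/2+2}}\! |\zeta|^{2m}\,e^{-|\zeta|^2/4}\, d\zeta,
$$
and the Gaussian tail past radius $2^{j/2+2}$ is majorized by any negative power of $2^j$; thus $K_j(\eta) \le C_{n'} 2^{-jn'}$ uniformly in $\eta$, for every $n'$.

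Plugging this into the previous display and using Plancherel's $\int |\eta|^{2|\alpha|}\theta_j(\eta)^2 |\hat u(\eta)|^2 d\eta \le C\, 2^{2j|\alpha|}\,\|\Theta_j u\|_{L^2}^2$, the desired $L^2(\RR^{2d})$ norm squared is bounded by $C\cdot 2^{-jd/2}\cdot 2^{-jn'}\cdot 2^{2j|\alpha|}\,\|\Theta_j u\|_{L^2}^2$; choosing $n' = 2n + 2|\alpha|$ absorbs the polynomial factors and yields the stated estimate. The case $j=0$ is identical (with $\theta_0 = \varphi_0$ supported in $|\xi|\le 2$ and $1-\varphi_2$ supported in $|\xi|\ge 4$). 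The only nonroutine step is the elementary support separation $|\xi-\eta|\ge |\xi|/2$ that converts the translated Gaussian into a genuine tail integral; once this is observed, the rest is bookkeeping.
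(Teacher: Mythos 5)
Your argument is correct and follows the same strategy as the paper: both start from the identity \eqref{rel2} applied to $\D_y^\alpha\Theta_j u$, both exploit the support separation $|\xi-\eta|\ge |\xi|/2$ on the support of $(1-\vp_{j+2})(\xi)\theta_j(\eta)$ (with $|\xi|\ge 2^{j+2}$, $|\eta|\le 2^{j+1}$), and both turn the resulting translated Gaussian into super-polynomial decay in $2^j$ that absorbs the polynomial factors coming from $|\xi|^m$ and $\eta^\alpha$. The only difference is bookkeeping: the paper splits $2^{-j}|\xi-\eta|^2\ge \tfrac12\,2^{-j}|\xi-\eta|^2+\tfrac12|\xi|$, keeping half the Gaussian to normalize the $\xi$-integral and using $e^{-|\xi|/2}$ for the decay, whereas you perform the $\xi$-integration explicitly via the kernel $K_j(\eta)$ and a rescaling; both routes are equivalent in content.
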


 \begin{proof}
 By \eqref{rel2} 
  \begin{equation*}
 \begin{aligned}
 \big\| (1 - \vp_{j+2}) W_{2^j}   \D_y^\alpha & \Theta_j u \big\|^2_{L^2(\RR^{2d})}     =   (2\pi)^{- d}   \left(\frac{ 1} {\pi 2^j} \right)^{  \frac{d}{2}} 
\\
& \int   e^{  - 2^{-j} | \xi - \eta |^2}   (1 - \vp_{j+2}(\xi))^2 | \eta^\alpha |^2
(\theta_j (\eta))^2 |  \hat u  (\eta) |^2 d \eta d \xi . 
 \end{aligned}
 \end{equation*}
On the support of $ (1 - \vp_{j+2}(\xi)) 
\theta_j (\eta)$, one has $ |\xi | \ge 2^{j+2} $, $| \eta | \le 2^{j+1}$ so that 
  $| \xi - \eta | \ge \mez | \xi| $   and therefore
$
2^{-j} | \xi - \eta |^2  \ge   \mez 2^{-j} | \xi - \eta |^2  +   \mez  | \xi | . 
$
Hence, 
 \begin{equation*}
 \begin{aligned}
 \big\| 2^{jn}  | \xi|^m  (1 - \vp_{j+2}) W_{2^j}  \D_y^\alpha \Theta_j u \big\|^2_{L^2(\RR^{2d})}   
   \le    (2\pi)^{- d}   & \left(\frac{ 1} {\pi 2^j} \right)^{  \frac{d}{2}} 
\\
 2^{j  n } 2^{( j+1) | \alpha |}  e^{ - 2^{j} }   \int    | \xi|^{2m}  e^{ - \frac{1}{2} | \xi |}  
 e^{  - \mez 2^{-j} | \xi - \eta |^2}   &
| \theta_j (\eta)  \hat u  (\eta) |^2 d \eta d \xi 
\\
&  \le  C     \big\|  \Theta_j u \big\|_{L^2(\RR^d)}^2. 
 \end{aligned}
 \end{equation*}
 \end{proof}
 
 \begin{cor}
 \label{cor103}
 \begin{equation}
 \big\| (1 - \vp_{j+2}) W_{2^j}\Theta_j  \D_{x_k}  u \big\|_{L^2(\RR^{2d})}  \le  
 C    \big\|  \Theta_j u \big\|_{L^2(\RR^d)} . 
 \end{equation}
 \end{cor}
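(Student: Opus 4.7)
The plan is to derive the corollary as an immediate specialization of Proposition~\ref{proplocal1}. Since $\Theta_j$ is a Fourier multiplier in the spatial variable, it commutes with differentiation, so $\Theta_j\D_{x_k}u = \D_{x_k}\Theta_j u$. Moreover, in the integral formula defining $W_{2^j}v(x,\xi)$ the input $v$ appears as $v(y)$ under the $y$-integral, so applying $\D_{x_k}$ to $v$ before inserting it into $W_{2^j}$ is, in the notation of Proposition~\ref{proplocal1}, the operation $\D_y^{\alpha}$ with $\alpha = e_k$ (the $k$-th unit multi-index). Hence
$$W_{2^j}\Theta_j\D_{x_k}u \;=\; W_{2^j}\D_y^{e_k}\Theta_j u,$$
and Proposition~\ref{proplocal1} applied with $m = 0$, $\alpha = e_k$, and $n = 0$ yields the stated bound directly.

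The only point requiring a brief check is that the constant in the proposition, which \emph{a priori} may depend on $n$, $m$, $\alpha$, is uniform in $j \ge 0$ for these fixed parameters. Tracing through the final display in the proof of Proposition~\ref{proplocal1}, the prefactor is a product of a normalization of size $2^{-jd/2}$, a polynomial factor $2^{j+1}$ coming from $|\eta^\alpha|^2$ on $|\eta|\lesssim 2^{j+1}$, and an exponentially small factor $e^{-2^j}$; for $j\ge 1$ the super-exponential decay dominates the polynomial growth, and $j=0$ contributes only a single finite constant, so the bound is uniform in $j$. No substantive obstacle is anticipated: the corollary is essentially a rewriting of the proposition for the case of a first-order derivative with no extra $|\xi|^m$ weight, combined with the $\Theta_j$--$\D_{x_k}$ commutation.
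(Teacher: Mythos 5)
Your proof is correct and is precisely the intended one: the corollary is the specialization $m=0$, $\alpha = e_k$, $n=0$ of Proposition~\ref{proplocal1}, after commuting the Fourier multiplier $\Theta_j$ with $\D_{x_k}$ and noting that the constant in the proposition is stated to be uniform in $j$. The paper gives no separate argument for the corollary, treating it exactly as this immediate specialization.
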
 
 
 \subsection{The main estimate} 
 
 Let 
\begin{equation}
A(x, \D_x) = \sum_{j=1}^d A_j(x) \D_{x_j}, \qquad A(x, \xi) = \sum_{j=1}^d \xi_j A_j(x) . 
\end{equation}
 We assume that we are given a matrix $S(s, \xi)$, homogeneous of degree $0$ in $\xi$ such that 
 $S(x, \xi) A(x, \xi)$ is hermitian symmetric. 
 
 Let  $\psi  \in C^\infty_0 (\RR^d)$,  and for 
 $\lambda \ge 1$ introduce  $\psi_ \lambda (\xi) = \psi   (\lambda^{-1} \xi)$.

\begin{prop}
\label{mainestloc}
Suppose that  the coefficient $A_j $ belong to $W^{2, \infty} (\RR^d)$ and that 
$S \in W^{1, \infty} (\RR^d \times S^{d-1})$.  Then, there is a constant $C$ such that for all 
$\lambda \ge 1 $ and $u$ 
 \begin{equation}
 \label{energy5}
 \Big|  \re   \Big( \psi_{\lambda } S   W_{\lambda}   u ,    W_{\lambda} \widetilde A   u  \big)_{L^2(\RR^{2d})} \Big| 
 \le C  \big\|  u \|^2_{L^2} . 
 \end{equation}

\end{prop}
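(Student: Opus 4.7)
The plan is to exploit the approximate intertwining of the first order operator $\widetilde A = A(x, \D_x)$ by the wave packet transform $W_\lambda$, which reduces the inequality to the pointwise algebraic fact that $SA$ is hermitian. Concretely, I would first derive an identity of the form
\[
W_\lambda (\widetilde A u)(x,\xi) = A(x,\xi)\, W_\lambda u(x,\xi) + R_\lambda u(x,\xi),
\]
with a remainder $R_\lambda$ bounded uniformly in $\lambda$ from $L^2(\RR^d)$ to $L^2(\RR^{2d})$. To obtain this, integrate by parts in $y$ in the defining integral \eqref{defWB}: each derivative $\D_{y_j}$ falling on the phase $i(x-y)\cdot \xi - \mez \lambda |x-y|^2$ yields either the factor $i\xi_j$ (generating the principal multiplier $A(x,\xi)$) or the factor $\lambda(x_j - y_j)$; a Taylor expansion $A_j(y) = A_j(x) + (y-x)\cdot \na A_j(x) + O_{W^{2,\infty}}(|y-x|^2)$ reorganizes the leftover contributions. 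The relevant convolution kernels then have the form $\lambda^{k/2}(x-y)^\alpha e^{-\mez \lambda |x-y|^2}$, with $L^1$ norms bounded uniformly in $\lambda$, which gives the required control on $R_\lambda$.

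With this in hand, split
\[
\re(\psi_\lambda S W_\lambda u, W_\lambda \widetilde A u) = \re(\psi_\lambda S W_\lambda u, A(x,\xi) W_\lambda u) + \re(\psi_\lambda S W_\lambda u, R_\lambda u).
\]
The principal term is handled by pointwise hermiticity of the matrix $\psi_\lambda(\xi) S(x,\xi) A(x,\xi)$ (since $\psi_\lambda$ is a real scalar and $SA$ is hermitian by hypothesis): it equals the real integral
\[
\int_{\RR^{2d}} \big\langle \psi_\lambda(\xi) (SA)(x,\xi)\, W_\lambda u, W_\lambda u \big\rangle_{\CC^N} \, dx\, d\xi,
\]
whose absolute value is at most $\|\psi_\lambda SA\|_{L^\infty}\, \|W_\lambda u\|_{L^2(\RR^{2d})}^2 \le C\|u\|_{L^2(\RR^d)}^2$ by the isometry of $W_\lambda$ (Lemma~\ref{action1}). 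The remainder term is bounded by Cauchy--Schwarz and the same isometry: $\|\psi_\lambda S\|_\infty \, \|W_\lambda u\|_{L^2} \, \|R_\lambda u\|_{L^2} \le C\|u\|_{L^2}^2$.

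I expect the main technical obstacle to be the precise identification and uniform control of $R_\lambda$. A brutal Cauchy--Schwarz on the left-hand side yields only $C\lambda \|u\|^2$, because $|A(x,\xi)| \lesssim |\xi| \lesssim \lambda$ on $\mathrm{supp}\,\psi_\lambda$; the \emph{entire} factor $\lambda$ must be recovered from the symmetry of $SA$. Consequently every term produced by the integration by parts and the Taylor expansion has to either be absorbed into the principal multiplier $A(x,\xi)$ or be shown to be of order $1$ in $\lambda$. This is precisely where the hypothesis $A_j \in W^{2,\infty}$ (rather than only $W^{1,\infty}$) is used: the second derivatives are needed to absorb the quadratic Taylor remainder $O(|y-x|^2)$ against the single factor $\lambda^{1/2}(x-y)$ produced by one integration by parts. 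The Lipschitz regularity of $S$ in $(x,\xi)$ is not strictly required for \eqref{energy5} itself, but it is natural to keep it in the hypotheses since it is what will allow the localized estimate to be summed over the dyadic pieces $\Theta_j u$ (via Corollary~\ref{cor103}) in the global energy argument.
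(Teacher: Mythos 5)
Your overall structure (intertwine $W_\lambda$ with $\widetilde A$ modulo a bounded remainder, then use a pointwise algebraic property of $SA$ on the principal term) is parallel to what the paper does via the self-adjoint Wick operator $\Sigma_\lambda = W_\lambda^* S_\lambda W_\lambda$, but the argument you give for the principal term is wrong. You bound it by $\|\psi_\lambda S A\|_{L^\infty}\|W_\lambda u\|^2$ and claim this is $\le C\|u\|^2$, yet $A(x,\xi)=\sum A_j(x)\xi_j$ is homogeneous of degree $1$ in $\xi$, and on $\mathrm{supp}\,\psi_\lambda$ one has $|\xi|\sim\lambda$, so $\|\psi_\lambda SA\|_{L^\infty}\sim\lambda$: this is exactly the ``brutal Cauchy--Schwarz'' bound of order $\lambda\|u\|^2$ that you yourself say must be improved. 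Hermiticity of $\psi_\lambda SA$ makes the quadratic form real, but realness does not give smallness. The correct mechanism is different: the quantization correspondence is $\partial_{x_j}\leftrightarrow i\xi_j$, so the intertwining reads $W_\lambda\widetilde A u \approx i A(x,\xi)W_\lambda u + R_\lambda u$, and the principal symbol is $-iA^*\psi_\lambda S$ (equivalently $i\psi_\lambda SA$), which is \emph{skew}-hermitian since $SA$ is hermitian and $S$ hermitian. A skew-hermitian multiplication operator has a purely imaginary quadratic form, so the real part of the principal term is \emph{exactly zero}, not merely $O(1)$. This vanishing, not a size estimate, is what recovers the factor $\lambda$, and it is the content of the paper's observation that $\re W_\lambda^*(-iA^*S_\lambda)W_\lambda = 0$.

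The remainder estimate is also too optimistic as written. After one integration by parts in $y$ and a first-order Taylor expansion of $A_j$, you are left with pieces such as $\xi_j\big(A_j(y)-A_j(x)\big)$ and $\lambda(x_j-y_j)A_j$, which on the support of the Gaussian are $O(|\xi|\cdot|x-y|)=O(\lambda\cdot\lambda^{-1/2})=O(\lambda^{1/2})$, not $O(1)$. The paper gains the missing $\lambda^{-1/2}$ by a \emph{second} integration by parts, using the identity $2\lambda(x_k-z_k)=Z_k\Phi_\lambda$ with $Z_k=\partial_{z_k}-i\lambda\partial_{\xi_k}$; this produces $\lambda^{-1}Z_k^*(\cdots)$, where each resulting term ($\xi_j/\lambda$ against a bounded factor, $\lambda\partial_{\xi_k}$ against the homogeneous-degree-zero $S$, etc.) is genuinely $O(1)$. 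In particular the terms $\partial_z S_\lambda$ and $\lambda\partial_\xi S_\lambda$ appear, so contrary to your last remark the $W^{1,\infty}$ regularity of $S$ in both $x$ and $\xi$ \emph{is} essential for Proposition~\ref{mainestloc} itself, not only for the dyadic summation afterwards. The $W^{2,\infty}$ hypothesis on the $A_j$ enters at the same step, because the factors $\tilde A_{j,k}$ in the Taylor identity $A_j(x)-A_j(z)=\sum_k(x_k-z_k)\tilde A_{j,k}(x,z)$ must themselves be Lipschitz to absorb the extra derivative from $Z_k^*$.
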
 

\begin{proof}
Let $S_\lambda = \vp_\lambda S$ and 
 consider the self adjoint operator 
$
 \Sigma_\lambda =  W_\lambda^* S _\lambda W_ \lambda$: 
$$
 \Sigma_\lambda u (x) = \kappa  \lambda^{  \frac{d}{2} }
 \int e^{ \Phi_\lambda   (x, y, z, \xi)}  S_\lambda ( z, \xi) u (y) dz d\xi d y    
$$
where $\kappa$ is a normallization factor  and 
$$
\Phi_\lambda (x, y, z \xi) = i (x - y) \xi - \mez \lambda (| x - z|^2 + | y-z|^2) .
$$

 The   estimate to prove is 
 \begin{equation}
\label{commut}
\Big| \re  \big(  A(x, \D_x)^*  \Sigma_\lambda u , u   \big)_{L^2} \Big|  \le C \big\| u \big\|_{L^2}^2.   
 \end{equation}
One can replace  $A(x, \D_x)^*$ by $ \tilde A^* := \sum - A_j^* \D_{x_j}$ since the difference 
is bounded in $L^2$ with norm $ O( \sup_j \| A_j \|_{W^{1, \infty}} ) $. One has 
$$
A_j^* \D_j  \Sigma_\lambda u (x)  = \kappa
\int e^{ \Phi_\lambda}  A_j^* (x) \big( i \xi_j - \lambda   (x_j - z_j) \big) S_\lambda (z, \xi) u (y) dz d\xi d y. 
$$
Therefore
\begin{equation}
\label{comm36}
\tilde A^* (x, \D_x ) \Sigma_\lambda =  W_\lambda ^* ( -i A^* S_\lambda) W_\lambda   + \sum_j  ( R_j^1 +  R_j^2 ) 
\end{equation}
where 
$$
R_j^1 u(x) =  i \kappa \int e^{ \Phi_\lambda}  \xi_j \big(A_j^* (z) - A_j^*(x)) \big) S_\lambda (z, \xi) u (y) dz d\xi d y, 
$$
$$
R_j^2 u(x) =  \kappa  \int e^{ \Phi_\lambda} \lambda  (x_j - z_j) A_j^*(x)  \big) S_\lambda (z, \xi)  u (y) dz d\xi d y.
$$
Because $S$ is a symmetrizer for $\sum A_j \xi_j$, the matrix 
$i A^*(x, \xi) S_\lambda$ is skew symmetric and thus the real part 
of the first term  in \ref{comm36} vanishes and it is sufficient to 
show that the remainders $R_j^{1, 2}$ are bounded in $L^2$. Write 
\begin{equation}
A_j^* (x) - A_j^*(z) = \sum_k    (x_k - z_k) \tilde A_{j,k}(x, z), 
\end{equation}
with $\tilde A_{j,k} \in W^{1, \infty}$, and use that 
$$
\begin{aligned}
2 \lambda (x_k - z_k)  = \D_{z_k} \Phi_\lambda  - i \lambda \D_{\xi_k} \Phi_\lambda := Z_k \Phi_\lambda 
\end{aligned}
$$
Integrating by parts in $(z, \xi)$ yields that $R_j^1 = \sum_k R^1_{j, k} $  with 
$$
R_{j,k}^1 u(x) =  - \frac{i \kappa }{\lambda}    \int e^{ \Phi_\lambda}  Z_k^* 
\big( \xi_j  A_{j,k}^*   S_\lambda \big)  u (y) dz d\xi d y.  
$$
Note that  
$
\frac{1}{\lambda} Z_k^* 
\big( \xi_j  A_{j,k}^*   S_\lambda \big)   $ is a sum a terms of the form $   B_{l } (x, z) S_l (z, \xi, \lambda ) 
$ 
where the $B_l$ and $S_l$ are uniformly bounded. Therefore 
    $R_{j,k}^1$ is of the sum of the operators 
 $W^*_{\lambda, B_{l}} S_l W _\lambda $   where the definition of 
$W_{\lambda, B_l}$ is   given in \ref{defWB}. 
Lemma~\ref{action1}  implies that the $R_j^1$  are uniformly in $\lambda$, 

The analysis of  $ R_j^2  $ is similar and  the proof of the proposition is complete. 
 \end{proof} 
 
 
 \subsection{Proof of  Theorem~\ref{L2WPLip} }
 
 Suppose that  the operator  $L$ in \eqref{geq} has $W^{2,  \infty} $ coefficients. 
 Without loss of generality, multiplying $L$ on the left by $A_0^{-1}$, 
 we assume that the coefficient of $D_t $ is $A_0 = \Id$ so that 
 $L = \D_t + A(t, x, \D_x)$. 
 We are given a Lipschtiz symmetrizer 
 $S (t, x, \xi)$  which  is  uniformly definite positive and such that 
 \begin{equation}
 S, \ \D_{t, x } S,  \  | \xi | \D_\xi S  \in L^\infty.
 \end{equation}  
 Consider the energy
 \begin{equation}
 \label{energy}
 \cE_t  (u)  = \sum_{j=0}^\infty  \big(  S(t)   W_{2^j} \Theta_j u ,    W_{2^j} \Theta_j u  \big)_{L^2(\RR^{2d})}. 
 \end{equation}
 
 \begin{lem}
 There are constants $C \ge c > 0$ such that 
 \begin{equation*}
 c \big\| u \big\|_{L^2(\RR^d)}^2  \le \cE (u)  \le  C\big\| u \big\|_{L^2(\RR^d)}^2 . 
 \end{equation*}
 \end{lem}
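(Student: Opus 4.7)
The plan is to reduce the bounds on $\cE_t(u)$ to a Littlewood--Paley square-function identity, by using two pointwise facts successively.

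First, since $S(t,x,\xi)$ is uniformly definite positive and bounded on $\RR^{d+1}\times S^{d-1}$ (extended to be homogeneous of degree $0$), one has pointwise in $(x,\xi)$
\begin{equation*}
c\, |v(x,\xi)|^2 \;\le\; \big(S(t,x,\xi) v(x,\xi), v(x,\xi)\big)_{\CC^N} \;\le\; C\, |v(x,\xi)|^2
\end{equation*}
for any $\CC^N$-valued function $v$ on $\RR^{2d}$, with $0<c\le C$ independent of $t$. Applying this to $v = W_{2^j}\Theta_j u$ and integrating in $(x,\xi)$ yields
\begin{equation*}
c\, \big\|W_{2^j}\Theta_j u\big\|_{L^2(\RR^{2d})}^2
 \;\le\;  \big(S(t) W_{2^j}\Theta_j u, W_{2^j}\Theta_j u\big)_{L^2(\RR^{2d})}
 \;\le\; C\, \big\|W_{2^j}\Theta_j u\big\|_{L^2(\RR^{2d})}^2.
\end{equation*}

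Second, the isometry statement in Lemma~\ref{action1} (the case $B\equiv\Id$) gives $\|W_{2^j}\Theta_j u\|_{L^2(\RR^{2d})} = \|\Theta_j u\|_{L^2(\RR^d)}$ for every $j$. Summing over $j\ge 0$ we obtain
\begin{equation*}
c\sum_{j=0}^\infty \|\Theta_j u\|_{L^2(\RR^d)}^2 \;\le\; \cE_t(u) \;\le\; C\sum_{j=0}^\infty \|\Theta_j u\|_{L^2(\RR^d)}^2.
\end{equation*}

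Finally, by Plancherel, $\sum_j \|\Theta_j u\|_{L^2}^2 = \int |\hat u(\xi)|^2 \bigl(\sum_j \theta_j(\xi)^2\bigr) d\xi$. Because $\theta_0+\sum_{j\ge 1}\theta_j\equiv 1$ and the supports of the $\theta_j$ have bounded overlap (at most two consecutive $\theta_j$'s overlap), there is a constant $K\ge 1$ such that $K^{-1}\le \sum_j \theta_j(\xi)^2 \le 1$ for all $\xi$. Hence $K^{-1}\|u\|_{L^2}^2 \le \sum_j\|\Theta_j u\|_{L^2}^2 \le \|u\|_{L^2}^2$, which combined with the previous display gives the stated two-sided bound. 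There is no real obstacle here; the lemma is essentially a bookkeeping step that packages the isometry of $W_\lambda$, the uniform positivity of $S$, and the $L^2$ Littlewood--Paley decomposition into a single energy equivalence, to be exploited in the time-differentiation argument that follows.
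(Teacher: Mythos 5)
Your proof is correct and follows the same route as the paper: bound the quadratic form pointwise by the uniform positivity and boundedness of $S$, pass through the isometry $\|W_{2^j}\Theta_j u\|_{L^2(\RR^{2d})}=\|\Theta_j u\|_{L^2(\RR^d)}$, and then invoke the Littlewood--Paley square-function equivalence; you simply spell out the Plancherel and bounded-overlap argument that the paper compresses into an $\approx$.
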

 \begin{proof}
 Because $S(t, x, \xi) $ is definite positive bounded from above and from below, 
  \begin{equation*}
   \cE_t  (u) \approx \sum_{j=0}^\infty  \big\|   W_{2^j} \Theta_j u \big\|^2_{L^2(\RR^{2d})}
   =  \sum_{j=0}^\infty  \big\|    \Theta_j u \big\|^2_{L^2(\RR^{2d})} \approx 
    \big\|     u \big\|^2_{L^2(\RR^{2d})}. 
    \end{equation*}
 \end{proof} 
 
 The Theorems follows from the energy estimate
 
 \begin{prop} If $u $ satisfies $L u = f$, 
 then 
 \begin{equation}
 \frac{d}{dt} \cE_t  (u(t)) \le   C   \Big( 
   \big\|    f (t)  \big\|_{L^2(\RR^{2d})}    \big\|     u (t) \big\|_{L^2(\RR^{2d})} +   \big\|     u(t) \big\|^2_{L^2(\RR^{2d})}
   \Big) . 
 \end{equation}
 
 \end{prop}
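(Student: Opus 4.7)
The plan is to differentiate $\cE_t$ in time and handle the resulting expression mode by mode, reducing each term either to a quantity controlled by $\|u\|_{L^2}^2$ via the equivalence of norms from the preceding lemma, or to one to which Proposition~\ref{mainestloc} applies with $\lambda=2^j$.

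Differentiation gives
$$
\frac{d}{dt}\cE_t(u) = \sum_j \bigl((\D_t S)\, W_{2^j}\Theta_j u, W_{2^j}\Theta_j u\bigr)_{L^2} + 2\re\sum_j \bigl(S\, W_{2^j}\Theta_j \D_t u, W_{2^j}\Theta_j u\bigr)_{L^2}.
$$
Since $\D_t S \in L^\infty$, the first sum is bounded by $C\|u\|_{L^2}^2$ using Lemma~\ref{action1} and Plancherel. Substituting $\D_t u = -A(t,x,\D_x) u + f$ from $Lu=f$, the $f$ contribution is bounded by $C\|f(t)\|_{L^2}\|u(t)\|_{L^2}$ by Cauchy--Schwarz and the boundedness of $S$ and $W_{2^j}$. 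The task is to bound the remaining sum $\sum_j \re(S\, W_{2^j}\Theta_j A u, W_{2^j}\Theta_j u)$ by $C\|u\|_{L^2}^2$.

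To do so, write $\Theta_j A = A\Theta_j + [\Theta_j, A]$. Since $A$ is first order with $W^{1,\infty}$ coefficients and the $\Theta_j$ are Littlewood--Paley projectors, the standard paradifferential commutator estimate $\sum_j \|[\Theta_j,A]u\|_{L^2}^2 \le C\|u\|_{L^2}^2$ holds; combined with Lemma~\ref{action1} and Cauchy--Schwarz on the $j$-sum, this piece contributes $O(\|u\|_{L^2}^2)$. For the principal piece $\sum_j \re(S\, W_{2^j} A\Theta_j u, W_{2^j}\Theta_j u)$, fix $\psi \in C^\infty_0(\RR^d)$ equal to $1$ on a neighborhood of the $\theta_j$-annulus at scale $1$, so that $\psi_{2^j}$ equals $1$ on the frequency support of $\theta_j$, and split $S = \psi_{2^j}S + (1-\psi_{2^j})S$. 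On the complement, the cutoff $(1-\psi_{2^j})$ localizes the $\xi$-variable of the wave packet away from the band where $W_{2^j}\Theta_j u$ is concentrated; Proposition~\ref{proplocal1}, applied with $|\alpha|=1$, $m=0$ and $n$ arbitrarily large to the factor $W_{2^j}A\Theta_j u$, absorbs the $|\xi|\sim 2^j$ from $A$ into $2^{-jn}$ decay and yields a bound by $C\|\Theta_j u\|_{L^2}^2$ that is summable. For the piece with the cutoff, applying Proposition~\ref{mainestloc} to $v=\Theta_j u$ at scale $\lambda=2^j$ gives
$$
\bigl|\re\bigl(\psi_{2^j} S\, W_{2^j}\Theta_j u, W_{2^j} A\Theta_j u\bigr)_{L^2}\bigr| \le C\|\Theta_j u\|_{L^2}^2,
$$
and Plancherel sums the right-hand side to $C\|u\|_{L^2}^2$.

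The main obstacle is the bookkeeping in the last step: one must choose the cutoff $\psi$ so that the complementary frequency region absorbs the $|\xi|\sim 2^j$ factor produced by the first-order operator $A$, while the principal region fits exactly the hypotheses of Proposition~\ref{mainestloc}. The $W^{2,\infty}$ assumption on the coefficients is consumed in Proposition~\ref{mainestloc} (via the integration by parts on the kernel), while only Lipschitz control is needed for the commutator $[\Theta_j,A]$; the Lipschitz regularity of $S$ in $(t,x,\xi)$ is used both in the main estimate and in controlling the $\D_t S$ term.
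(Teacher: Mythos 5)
Your proposal follows essentially the same architecture as the paper's proof: differentiate $\cE_t$, dispose of the $\D_t S$ and $f$ contributions directly, then reduce $\sum_j\re(S W_{2^j}\Theta_j u, W_{2^j}\Theta_j Au)$ to the form handled by Proposition~\ref{mainestloc} via (a) a Littlewood--Paley commutator estimate and (b) a frequency cutoff split. The paper does the cutoff split before the commutator, you do them in the opposite order; that is harmless. The commutator lemma you invoke as ``standard'' is exactly the paper's Lemma~\ref{lemcomloc}, proved there by paradifferential calculus.

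One step in your write-up is misstated. You claim to apply Proposition~\ref{proplocal1} ``with $|\alpha|=1$ \dots to the factor $W_{2^j}A\Theta_j u$.'' That proposition only applies to $W_{2^j}\D_y^\alpha\Theta_j u$, i.e.\ to a constant-coefficient derivative acting on a spectrally localized function; it does not apply with $A(x,\D_x)$ inside since $A$ has variable coefficients and does not commute with $\Theta_j$, so $A\Theta_j u$ is not exactly frequency-localized. The correct move (which is what the paper does) is to use that $(1-\psi_{2^j})(\xi)$ is a real scalar multiplier on $L^2(\RR^{2d})$, transfer it across the inner product to the factor $W_{2^j}\Theta_j u$, and apply Proposition~\ref{proplocal1} there with $\alpha=0$, $m=0$ and $n\ge 1$: this gives $\|(1-\psi_{2^j})W_{2^j}\Theta_j u\|\lesssim 2^{-jn}\|\Theta_j u\|$, while the other factor satisfies $\|W_{2^j}A\Theta_j u\|=\|A\Theta_j u\|\lesssim 2^j\|\Theta_j u\|$, so the product is $O(2^{-j(n-1)}\|\Theta_j u\|^2)$ and sums. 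With this correction the argument is complete and matches the paper's.
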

 
  \begin{proof} 
 One has 
 \begin{equation*}
 \begin{aligned}
  \frac{d}{dt} \cE_t  (u(t)) &  =  (\D_t \cE) (u (t) )  + 
  2 \re \widetilde \cE_t (u(t), \D_t u (t)))
  \\
  &  =  (\D_t \cE) (u (t) )  + 
  2 \re \widetilde \cE_t (u(t), f (t))) - 2 \re \widetilde \cE_t (u(t), A u (t)))
  \end{aligned}
 \end{equation*}
 where $\D_t \cE_t$ is the expression \eqref{energy} with $S$ replaced by $\D_t S$ and 
 $\widetilde \cE$ is the bilinear version of $\cE$. Because 
 $\D_t S \in L^\infty$, the first term 
 is $O (\| u(t) \|_{L^2}^2)$. 
 Similarly, the second term 
 is  $O (\| u(t) \|_{L^2} \| f(t) \|_{L^2})$ and it remains to prove that
 \begin{equation}
 \label{mainestim}
 \big|  \re \widetilde \cE_t \big(u(t), A u (t)\big)  \big\|  \le  
 C \big\| u(t) \big\|_{L^2}^2. 
 \end{equation}  
 For simplicity we drop the time from the notations, $t$ being a parameter 
 and all the estimate below being uniform in $t$. 
 
 The expression to consider is
  \begin{equation}
 \label{energy2}
\widetilde  \cE  (u, A u)  = \sum_{j=0}^\infty  \big(  S   W_{2^j} \Theta_j u ,    W_{2^j} \Theta_j  A u  \big)_{L^2(\RR^{2d})}. 
 \end{equation}
Corollary~\ref{cor103} implies that 
 \begin{equation*}
 \Big|   \big(  (1- \vp_{j+2}) S   W_{2^j} \Theta_j u ,    W_{2^j} \Theta_j  \widetilde A u  \big)_{L^2(\RR^{2d})}
 \Big| \lesssim 
 \sum_k  \big\| \Theta_j u \big\|_{L^2} 
 \big\| \Theta_j (A_k u ) \big\|_{L^2} , 
\end{equation*}
 and the sum over $j$ of these terms is $ O (  \| u \|^2_{L^2}   )$.
 Next, we  replace 
 $\Theta_j   ( A u)  $ by $  A \Theta_j   u $ using the following independent lemma  
 
 \begin{lem}
 \label{lemcomloc}
The $ g_j =  [  A, \Theta_j ]  u$ satisfy
  \begin{equation}
  \label{114}
 \sum \big\| g_j  t)  \big\|^2_{L^2}  \lesssim      \big\|     u (t) \big\|^2_{L^2}. 
 \end{equation}
 \end{lem}

Since $S$ is bounded and since the $W_\lambda$ are isometries one has 
 \begin{equation*}
 \label{energy2b}
\Big| \sum_{j=0}^\infty  \big(\vp_{j+2}    S   W_{2^j} \Theta_j u ,    W_{2^j}  g_j   \big)_{L^2(\RR^{2d})}
\Big|  \lesssim   \sum_{j=0}^\infty  \big\| \Theta_j u \|_{L^2}     \big\|  g_j   \big\|_{L^2}
\lesssim   \big\|  u \|^2_{L^2} . 
 \end{equation*}
Summing up, we have proved that 
\begin{equation*}
\widetilde  \cE  (u, A u) = 
\sum_{j=0}^\infty  \big( \vp_{j+2} S   W_{2^j} \Theta_j u ,    W_{2^j}  A  \Theta_j u  \big)_{L^2(\RR^{2d})} 
 + O( \big\|  u  \big\|^2_{L^2}). 
\end{equation*}
By Proposition~\ref{mainestloc}, the real part of the sum is 
$O (\sum \| \Theta_j u\|_{L^2}^2) = O( \| u \|_{L^2}^2)$ finishing the proof of 
\ref{mainestim} and of the proposition.  
\end{proof}
 
 \begin{proof}[Proof of Lemma~$\ref{lemcomloc}$]
  Using the para-differential calculus (see e.g. \cite{Met}) , one has 
 \begin{equation*}
 A  (x, \D_x)  = T_{ i A }  + R 
 \end{equation*}
 where $T_{i A}$ is a pardifferential  operator  of symbol 
 $i A(x, \xi)$ and $R$ is bounded from $L^2$ to $L^2$. Thus
  \begin{equation*}
 g_j  = [ T_A , \Theta_j ] u   + R \Theta_j u  -  \Theta_j Ru .
 \end{equation*}
 The last two terms  satisfy \eqref{114}.  The symbolic calculus implies that the 
 $[ T_{ i A }, \Theta_j ] $  are uniformly bounded in $L^2$  since the coefficients of 
 $A$ belong to $W^{1, \infty}$. Moreover,  the spectral properties 
 of the paradifferential calculus shows that 
 $ [ T_{i A}  , \Theta_j ]  = [ T_{i A} , \Theta_j ]  \widetilde \Theta_j $  where the cut-off function
$ \tilde \theta_j  $ is supported in the annulus 
$\{   2^{j - n}  \le | \xi | \le 2^{j+n} \} $  for some fixed $n$ if $j \ge 1$
and in the ball  $\{     | \xi | \le 2^{n} \} $    if $j = 0$. Therefore 
$ g'_j = [ T_{i A}  , \Theta_j ] u $  satisfies
$$
\big\| g'_j \big\|_{L^2}  \lesssim \big\| \widetilde \Theta_j u \big\|_{L^2} 
$$
and thus \eqref{114}. 
 \end{proof}



\section{Examples and counterexamples }
\label{sectionExamples}

 In this section, we discuss the question of the existence 
 of  symmetrizers of limited smoothness. The case of generic double eigenvalues is 
 specific, but for eigenvalues of higher order, it is easy to construct 
 examples of systems with symmetrizers which necessarily have no, or a limitied, smoothness. 
 
Second, we show on an example  that that Lipschitz smoothness is sharp, even for well posedness 
in $C^\infty$. 

\subsection{Example of non smooth  symmetrizers}

In space dimension two consider, near the origin,  a system of the form
\begin{equation}
\label{explegen0}
L_0 (x, \D_t, \D_x,  \D_y)   =  \cL_0 (\D_t , \D_x, x \D_y) = \D_t + A  \D_x + x B \D_y, 
\end{equation}
with $\cL(\tau, \xi, \eta)$   strictly hyperbolic. Consider next a perturbation 
\begin{equation}
\label{explegena}
L_a (x, \D_t, \D_x,  \D_y)   =  L_0 (x, \D_t, \D_x,  \D_y)  + x a(x) C \D_y 
= \cL (a(x), \D_t , \D_x, x \D_y)
\end{equation} 
We will give explicit examples below. 
For $a$ small, $\cL(a, \tau,\xi, \eta)$ is still strictly hyperbolic and therefore it 
has   smooth symmetrizers  $\cS(a, \xi, \eta)$ for $(\xi, \eta) \ne (0, 0)$, providing 
bounded symmetrizers for $L_a (x,  \tau, \xi, \eta) $
\begin{equation}
\label{symgena}
S(a, x, \xi, \eta) =  \cS(a, \xi,  x \eta)  
\end{equation}
for $(x, \xi) \ne (0,0) $.  On the unit sphere $\xi^2 + \eta^2 = 1$, they  are smooth   when 
  $(x, \xi) \ne (0,0) $. The definition of $S$ can be extended at  $(x, \xi) = (0,0) $, 
  but  in general they have a singularity there.  
 
 \begin{lem}
 Suppose in addition that $\cL_0$ is symmetric. Then, for $a$ small, there is a symmetrizer 
 $\cS$ of the form 
 \begin{equation}
\label{syma}
\cS (a, \xi, \eta) =  \Id  + a \cS_1(a, \xi, \eta) 
\end{equation}
with $\cS_1$ homogeneous of degree $0$ in $(\xi, \eta)$ and smooth 
in $ (a, \xi, \eta)$  for $(\xi, \eta) $ in unit sphere $\xi^2 + \eta^2 = 1$. 
 \end{lem}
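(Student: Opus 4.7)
The plan is to build the symmetrizer from the Riesz spectral projectors of $M_a(\xi, \eta) := A\xi + (B + aC)\eta$, taking advantage of the fact that at $a=0$ the matrix $M_0$ is Hermitian so its orthogonal eigenprojectors sum to $\Id$.

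Since $\cL_0$ is symmetric and strictly hyperbolic, for every $(\xi, \eta)$ on the compact circle $\xi^2 + \eta^2 = 1$ the matrix $M_0(\xi, \eta)$ has $N$ distinct real eigenvalues. By continuous dependence of the spectrum on parameters, this persists for $M_a(\xi, \eta)$ uniformly in $(\xi, \eta)$ provided $|a| \le a_0$ is small. The $j$-th spectral projector
$$
\Pi_j(a, \xi, \eta) = \frac{1}{2\pi i} \oint_{\gamma_j} (z\,\Id - M_a(\xi, \eta))^{-1}\, dz
$$
then depends smoothly on $(a, \xi, \eta)$ and is homogeneous of degree $0$ in $(\xi, \eta)$; only the homotopy class of $\gamma_j$ in the complement of the other eigenvalues matters, so the projectors are unambiguously defined on the whole circle.

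I would then set
$$
\cS(a, \xi, \eta) := \sum_{j=1}^N \Pi_j(a, \xi, \eta)^*\, \Pi_j(a, \xi, \eta).
$$
Using the commutation $\Pi_j M_a = \lambda_j \Pi_j$ with $\lambda_j$ real, a short calculation gives $\cS M_a = \sum_j \lambda_j\, \Pi_j^* \Pi_j$, which is manifestly Hermitian. Positivity follows from $\langle \cS v, v\rangle = \sum_j \|\Pi_j v\|^2$: this vanishes only when $v = 0$ since $\sum_j \Pi_j = \Id$, and compactness makes the lower bound uniform. At $a=0$ the eigenprojectors $\Pi_j^0$ of the Hermitian $M_0$ are orthogonal, so $(\Pi_j^0)^* \Pi_j^0 = \Pi_j^0$, and summing yields $\cS(0, \xi, \eta) = \Id$.

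The announced form then comes for free from Hadamard's lemma applied in the variable $a$: setting
$$
\cS_1(a, \xi, \eta) := \int_0^1 (\partial_a \cS)(ta, \xi, \eta)\, dt
$$
gives a smooth $\cS_1$, homogeneous of degree $0$ in $(\xi,\eta)$, with $\cS = \Id + a\,\cS_1$. The one place where some care is needed is the global smoothness of the $\Pi_j$ on the circle, but uniform simplicity of the eigenvalues makes the contour integral representation valid with contours that can be chosen smoothly on each piece of a finite cover, and the rest is routine.
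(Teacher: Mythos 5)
Your argument is exactly the paper's: define $\Pi_j$ as spectral projectors, set $\cS = \sum \Pi_j^* \Pi_j$, observe that at $a=0$ the projectors of the Hermitian $M_0$ are orthogonal so $\cS(0,\cdot)=\Id$, and then read off the form $\Id + a\,\cS_1$ via Taylor/Hadamard. The only difference is that you spell out the contour-integral representation and the Hadamard step, which the paper leaves implicit; the substance is identical and correct.
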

 
 \begin{proof}
 The spectral projectors $\Pi_j(a, \xi, \eta)$ are smooth in 
 $ (a, \xi, \eta)$  for $(\xi, \eta) $ in unit sphere $\xi^2 + \eta^2 = 1$ and 
 $\cS = \sum \Pi_j^* \Pi_j$ is a symmetrizer. Since $\cL_0$ is symmetric, the  $\Pi_j$ are symmetric when $a= 0$ and therefore
$\cS (0, \xi, \eta) =\Id$, implying \eqref{syma}.  
 \end{proof}
 
 Substituting in \eqref{symgena} implies the following
 
 \begin{cor}
 \label{corholdsym}
If $\cL_0$ is symmetric and strictly hyperbolic and $a(x) = | x |^\alpha$ with $0 < \alpha < 1$, 
$L_a$ admits    H\"older continuous 
symmetrizers  $S (a, x, \xi, \eta)$ of class $C^\alpha$. 

 If $a(x) = x$, it admits a Lipschtiz symmetrizer. 
 \end{cor}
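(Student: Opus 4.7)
By the preceding lemma, the symmetrizer of $\cL$ has the normal form $\cS(a,\xi,\eta)=\Id+a\,\cS_1(a,\xi,\eta)$, so that substituting into \eqref{symgena} gives
\[
S(x,\xi,\eta)=\Id+a(x)\,\cS_1\bigl(a(x),\xi,x\eta\bigr).
\]
It therefore suffices to control the regularity of $F(x,\xi,\eta):=a(x)\,\cS_1(a(x),\xi,x\eta)$ on a neighborhood of the origin, with $(\xi,\eta)\in S^1$. Since $\cS_1$ is smooth in its first argument and homogeneous of degree $0$ in $(\xi,\eta)$, it may be written as $h(a,(\xi,\eta)/\sqrt{\xi^2+\eta^2})$ with $h$ smooth; in particular $|\partial_{(\xi,\eta)}\cS_1|\ls 1/\sqrt{\xi^2+\eta^2}$. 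Precomposing with the polynomial map $(x,\xi,\eta)\mapsto(a(x),\xi,x\eta)$, we see that $\cS_1(a(x),\xi,x\eta)$ is $C^\infty$ off the singular axis $\Sigma=\{x=0,\ \xi=0\}$ (on which $\eta=\pm1$), with the pointwise bound $|\nabla_{(x,\xi,\eta)}\cS_1(a(x),\xi,x\eta)|\ls (|a'(x)|+1)/\sqrt{\xi^2+x^2\eta^2}$.

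For the \emph{Lipschitz case} $a(x)=x$, I would differentiate $F=x\,\cS_1(x,\xi,x\eta)$ directly. The three partial derivatives,
\[
\partial_x F=\cS_1+x\,\partial_a\cS_1+x\eta\,\partial_y\cS_1,\qquad \partial_\xi F=x\,\partial_\xi\cS_1,\qquad \partial_\eta F=x^2\,\partial_y\cS_1,
\]
are bounded by virtue of the elementary ratios $|x\eta|/\sqrt{\xi^2+x^2\eta^2}\le 1$, $|x|/\sqrt{\xi^2+x^2\eta^2}\le 1/|\eta|$, and $x^2/\sqrt{\xi^2+x^2\eta^2}\le |x|/|\eta|$, all uniformly bounded near $\Sigma$ since $|\eta|\ge \mez$ there. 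Away from $\Sigma$ the function is obviously smooth, so $F\in W^{1,\infty}$.

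For the \emph{Hölder case} $a(x)=|x|^\alpha$, $0<\alpha<1$, fix two points $p_1,p_2$ and set $\delta=\|p_1-p_2\|$. The plan is a case split on the size of $\min(|x_1|,|x_2|)$. If $\min(|x_1|,|x_2|)\le\delta$, then also $\max(|x_i|)\le 2\delta$, and since $F$ is bounded by $C|x|^\alpha$ it follows that $|F(p_1)-F(p_2)|\le C\delta^\alpha$. Otherwise $|x_1|,|x_2|>\delta$, so $x_1$ and $x_2$ share a sign and the segment $p_1p_2$ lies in $\{|x|>\delta\}$. Writing
\[
F(p_1)-F(p_2)=\bigl(|x_1|^\alpha-|x_2|^\alpha\bigr)\cS_1(\cdot)_1+|x_2|^\alpha\bigl(\cS_1(\cdot)_1-\cS_1(\cdot)_2\bigr),
\]
the first summand is $\le C|x_1-x_2|^\alpha\le C\delta^\alpha$, and the second is treated using the angular representation $\cS_1(a,\xi,y)=h(a,\arctan(y/\xi))$ on each half-$\xi$-plane, together with the elementary estimate $|\arctan(y/\xi_1)-\arctan(y/\xi_2)|\le |\xi_1-\xi_2|/\sqrt{\xi^2+y^2}$.

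The main obstacle is the last step: even when both $|x_i|>\delta$, the point $(\xi,x\eta)$ may approach the origin along the segment, so $\cS_1$ can swing by $O(1)$. The resolution is the algebraic inequality
\[
|x|^\alpha\min\!\Bigl(1,\tfrac{|\Delta(\xi,x\eta)|}{\min(r_1,r_2)}\Bigr)\le C\delta^\alpha,\qquad r_i=\sqrt{\xi_i^2+x_i^2\eta_i^2},
\]
which I would verify by splitting according to whether $|x|^\alpha\le\delta^\alpha$ (so the trivial $|\cS_1|\le C$ bound suffices) or $|x|>\delta$ (in which case one estimates $|x|^{\alpha-1}|\xi_1-\xi_2|\le|x|^{\alpha-1}\delta^{1-\alpha}\cdot\delta^\alpha$ and uses $|x|^{\alpha-1}\delta^{1-\alpha}\le 1$). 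This delivers the $C\delta^\alpha$ modulus of continuity claimed.
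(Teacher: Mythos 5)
The paper offers no written proof here: it presents the corollary as an immediate consequence of substituting $a=a(x)$ into \eqref{symgena} together with the normal form $\cS=\Id+a\,\cS_1$ from the preceding lemma, so there is no detailed argument to compare against. Your proof supplies exactly the omitted computation and is essentially correct. The Lipschitz case is handled by the pointwise bounds $|\partial_\xi\cS_1|,|\partial_y\cS_1|\lesssim(\xi^2+x^2\eta^2)^{-1/2}$, compensated by the factors of $x$ produced by the chain rule, and the H\"older case by the two-point estimate split on $\min(|x_1|,|x_2|)\lessgtr\delta$, which correctly trades the $O(1)$ oscillation of $\cS_1$ near $\Sigma$ against the vanishing of $|x|^\alpha$.

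One inaccuracy worth flagging: for $a(x)=|x|^\alpha$ with $\alpha<1$, the map $(x,\xi,\eta)\mapsto(a(x),\xi,x\eta)$ is neither polynomial nor smooth, and $\cS_1(a(x),\xi,x\eta)$ is \emph{not} $C^\infty$ off $\Sigma$; it inherits a $C^\alpha$ singularity from $a$ along all of $\{x=0\}$, not only on $\Sigma$. This does not damage the argument, since your H\"older estimate never differentiates in $x$ near $\{x=0\}$, but the sentence should be restricted to the Lipschitz case or reworded. Also, once you are in the subcase $|x_1|,|x_2|>\delta$ the alternative $|x|^\alpha\le\delta^\alpha$ in the final ``algebraic inequality'' is vacuous. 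A cleaner packaging is to record the two bounds $|F|\le C|x|^\alpha$ and $|\nabla F|\le C|x|^{\alpha-1}$ for $x\ne 0$, and observe that any continuous function satisfying them is $C^\alpha$: if $\min|x_i|\le\delta$ the pointwise bound gives $|F(p_1)-F(p_2)|\le C\delta^\alpha$ directly, and otherwise the segment $[p_1,p_2]$ lies in $\{|x|>\delta\}$, where $|\nabla F|\le C\delta^{\alpha-1}$ and integration finishes. This subsumes both your case split and the separation of $F(p_1)-F(p_2)$ into two summands.
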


 \noindent {\bf Example 1:  }  consider 
 \begin{equation}
 \label{exple1}
 L_a     =  \D_t  + 
 \begin{pmatrix}
 0 & \D_x  + x a  \D_y  & \ x  \D_y 
 \\ \D_x  - x a \D_y & 0 & 0
 \\
 x  (1 + a^2) ) \D_y  & 0 & 0
 \end{pmatrix} 
 \end{equation}  
In this case,  $\det \cL(a, \tau, \xi, \eta) = \tau (\tau^2 - \xi^2 - \eta^2)$, so that 
$\cL_a$ is always strictly hyperbolic.

  \begin{lem}
  If $a \ne 0 $ is a constant, there are bounded symmetrizers  $S ( x, \xi, \eta)$ for $L_a$, but no continuous 
 symmetrizers at  $(x, \xi) = (0, 0) $ when $\eta = 1$.

 \end{lem}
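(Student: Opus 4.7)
The plan is to establish the two parts of the lemma separately: the existence of a bounded symmetrizer by falling back on the structural framework of the section, and the non-existence of a continuous symmetrizer by a blow-up/linearization argument at the degenerate point $(x,\xi,\eta)=(0,0,1)$.

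For the first part, I would first verify that $L_a$ fits the setup \eqref{explegena} by writing $L_a = \cL(a,\D_t,\D_x,x\D_y)$, where
\[
\cL(a,\tau,\xi',\eta') = \tau\,\Id + \begin{pmatrix} 0 & \xi' + a\eta' & \eta' \\ \xi' - a\eta' & 0 & 0 \\ (1+a^2)\eta' & 0 & 0 \end{pmatrix}.
\]
A direct expansion of $\det(\tau\,\Id + A)$ gives the characteristic polynomial $\tau(\tau^2 - \xi^2 - x^2\eta^2)$, and analogously $\tau(\tau^2 - \xi'^2 - \eta'^2)$ for $\cL$. Hence $\cL(a,\cdot)$ is strictly hyperbolic for $(\xi',\eta')\ne 0$ and admits a smooth homogeneous symmetrizer $\cS(a,\xi',\eta')$ on the unit sphere. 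Pulling back, $S(x,\xi,\eta):=\cS(a,\xi,x\eta)$ is a bounded symmetrizer for $L_a$ wherever $(\xi,x\eta)\ne 0$; on the remaining exceptional set $\{(x,\xi)=(0,0),\;\eta\ne 0\}$ we extend $S$ arbitrarily (say by $\Id$), producing a globally bounded symmetrizer.

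For the non-existence of a continuous symmetrizer at $(0,0,1)$, I would argue by contradiction. Suppose $S$ is continuous there and set $S_0:=S(0,0,1)$, which must be symmetric and positive definite. The crucial observation is that, at fixed $\eta=1$, the matrix $A(x,\xi,1)$ depends linearly on $(x,\xi)$. For any unit direction $(\tilde x,\tilde\xi)\in S^1$, approach $(0,0)$ along $(x_n,\xi_n)=(\tilde x/n,\tilde\xi/n)$: then $A(x_n,\xi_n,1)=n^{-1}A(\tilde x,\tilde\xi,1)$, and symmetry of $S(x_n,\xi_n,1)\,A(x_n,\xi_n,1)$ multiplied by $n$ passes to the limit to yield that $S_0\,A(\tilde x,\tilde\xi,1)$ is symmetric \emph{for every direction} $(\tilde x,\tilde\xi)$ on the unit circle. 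Specializing to two directions should suffice: the direction $(\tilde x,\tilde\xi)=(0,1)$ gives $A=e_1e_2^T+e_2e_1^T$, and symmetry of $S_0 A$ forces $s_{11}=s_{22}$ and $s_{13}=s_{23}=0$; the direction $(1,0)$ then produces (among other relations) the equation $(1+a^2)s_{23}=a(s_{11}+s_{22})$. Substituting $s_{23}=0$ and using $a\ne 0$ yields $s_{11}+s_{22}=0$, and combined with $s_{11}=s_{22}$ this forces $s_{11}=s_{22}=0$, contradicting positive definiteness of $S_0$.

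The main obstacle is the second part: one has to justify carefully that continuity of $S$ at the degenerate point really does constrain $S_0$ to be a common symmetrizer of the full family $\{A(\tilde x,\tilde\xi,1):(\tilde x,\tilde\xi)\in S^1\}$, and then to pick a pair of directions whose combined algebraic constraints are incompatible with $S_0>0$. Once the linearization (blow-up) is in place and the right directions are chosen, the contradiction reduces to a short $3\times 3$ computation; the bounded-symmetrizer part is essentially a direct application of the reduction already used in the section.
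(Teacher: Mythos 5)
Your proposal is correct and follows essentially the same route as the paper: existence of a bounded symmetrizer via the pull-back $S(x,\xi,\eta)=\cS(a,\xi,x\eta)$ of the strictly hyperbolic symbol $\cL$, and non-existence of a continuous symmetrizer by passing to the limit at $(x,\xi)=(0,0)$ in the symmetry relations. The paper takes the two limits directly along $x=0$ (with $\xi\to 0$) and along $\xi=0$ (with $x\to 0$) after normalizing $s_{22}=1$, whereas you package the same step slightly more cleanly as the observation that, by linearity of $A(\cdot,\cdot,1)$ in $(x,\xi)$, continuity of $S$ forces $S_0$ to symmetrize $A(\tilde x,\tilde\xi,1)$ for every direction; specializing to $(0,1)$ and $(1,0)$ then reproduces exactly the paper's contradiction when $a\ne 0$.
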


\begin{proof} 
Fix $\eta = 1$.  If $ S (x, \xi) $ is a symmtetrizer, then its complex conjugate is also a symmetrizer, 
  so that $ S + \overline { S}$  is a symmetrizer. Thus,  it is sufficient to consider 
  the case where   $ S$ has real coefficients $s_{j, k}$. The symmetry condition reads 
  \begin{equation*}
\begin{aligned}
 (\xi + a x) s_{11}& =  (\xi - a x) s_{22} + (1+a^2) x s_{23}
\\
 \eta  s_{11} & =  (\xi - a x ) s_{23} + (1+a^2) x s_{33}
 \\
 \eta s_{12} & = (\xi + a x)   s_{13}. 
\end{aligned}
\end{equation*}
  The third condition is independent of the first two, it only involves $s_{12} $ and $s_{13}$, and
  is trivially satisfied by $s_{12}= s_{13} = 0$. 
  
  There is no restriction in assuming that $s_{22} = 1$.  Setting 
  $s'_{11} =   s_{11} -1 $,  $s'_{33} = (1+a^2) s_{33} -s_{11} $, one must have 
   \begin{equation}
\label{pds}
\begin{aligned}
 (\xi + a x ) s'_{11}& =  -2 a x  + (1+a^2) x  s_{23}
\\
x   s'_{33} & =  - (\xi - a x ) s_{23} .
  \end{aligned}
\end{equation}
 Suppose that the coefficients are continuous at $(x,\xi )= (0,0)$. Then taking $x = 0$
 and $\xi \ne 0$ in the equations above, dividing out by $\xi$ and letting $\xi$ tend to $0$ implies that 
 $s_{2,3} (0, 0) =  s'_{11} (0, 0)  = 0$. Taking $\xi = 0$ dividing out by$x$ and letting $x$ tend 
 to $0$ implies  that $   s'_{11} (0, 0)  = - 2    +(1 +a^2 ) s'_{23}(0, 0)$
 and $s'_{33}(0,0) =  a  s_{2,3} (0, 0) $. 
 These conditions can be met only if $a  = 0$. 
  \end{proof}
  
  When $a(x) = x$, by Corollary~\ref{corholdsym}, there is a Lipschitz symmetrizer, but it turns out 
  that in this specific case, one can construct a $C^\infty$ symmetrizer. 
  The next example shows that this is not always the case. 
  
  \bigbreak
  
  \noindent{\bf Example 2 : }  Consider the $4 \times 4 $ system with symbol 
 \begin{equation}
 L_a   =  \D_t   +    \begin{pmatrix} \Omega  &  a  J   \\ 0   & 2 \Omega  \end{pmatrix}, 
 \qquad \Omega = \begin{pmatrix} \xi & x\eta \\ x\eta  & - \xi \end{pmatrix}, 
 \qquad  J  = \begin{pmatrix} x \eta  & 0\\ 0  & 0 \end{pmatrix}. 
 \end{equation}
 By Corollaray~\ref{corholdsym}, when $a(x) = x$, $L_a$ has a Lispchitz  symmetrizer but

 \begin{lem}
 When $a(x) = x$, there are no $C^1$ symmetrizers for $L_a$.  
 \end{lem}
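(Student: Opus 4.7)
\emph{Proof plan.} The plan is to derive a contradiction from the hermiticity of $SA$ for a hypothetical $C^1$ symmetrizer $S$ at the singular point $(x,\xi,\eta) = (0,0,1)$. By homogeneity it suffices to work on the slice $\eta = 1$, where $\Omega = \xi\sigma_3 + x\sigma_1$ and $aJ = x^2\,E_{11}$ ($\sigma_j$ the Pauli matrices, $E_{11} = \mathrm{diag}(1,0)$). I would write $S$ in $2\times 2$ block form $S = \begin{pmatrix} P & Q \\ Q^* & R \end{pmatrix}$ with $P, R$ Hermitian and expand $SA = (SA)^*$ block by block. The diagonal constraints force $P$ and $R$ to commute with $\Omega$, and the crucial off-diagonal identity reads
\begin{equation*}
\Omega\, Q - 2Q\,\Omega \ = \ x^2\, P\, E_{11}.
\end{equation*}

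I would next pin down the two scalar data that govern the argument. For $(x,\xi)\neq 0$ the matrix $\Omega$ has distinct eigenvalues, so its commutant is $\mathrm{span}(\Id,\Omega)$ and $P = p_0\Id + p_1\Omega$ carries no $\sigma_2$ component; approaching the origin along the coordinate axes forces the $\sigma_1$ and $\sigma_3$ components of $P(0,0,1)$ to vanish, and continuity then leaves $P(0,0,1) = p\,\Id$ with $p > 0$ by positivity. Setting $x = 0$ in the off-diagonal identity and dividing by $\xi\neq 0$ yields $\sigma_3 Q(0,\xi,1) = 2 Q(0,\xi,1)\sigma_3$, an equation whose only $2\times 2$ solution (by direct inspection on each entry) is zero; hence $Q(0,\xi,1) \equiv 0$ in a neighborhood of $0$. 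In particular $Q(0,0,1) = 0$ and $\partial_\xi Q(0,0,1) = 0$, so the $C^1$ expansion simplifies to $Q(x,\xi,1) = x\,Q_x + o(|x|+|\xi|)$ with $Q_x := \partial_x Q(0,0,1)$.

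To close the argument I would restrict the off-diagonal identity to the ray $\xi = tx$ for a real parameter $t$, divide twice by $x$, and let $x\to 0$. Using the $C^1$ expansion above and the continuity of $P$, the limit reads
\begin{equation*}
(\sigma_1 + t\sigma_3)\,Q_x \ - \ 2Q_x\,(\sigma_1 + t\sigma_3) \ = \ p\,E_{11}
\end{equation*}
and must hold for every $t$ in an interval. Matching the coefficient of $t$ in this affine identity gives $\sigma_3 Q_x = 2Q_x\sigma_3$, which the same Pauli computation as before forces to be $Q_x = 0$; substituting back into the $t^0$ term yields $0 = p\,E_{11}$, contradicting $p > 0$.

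The main obstacle is extracting two independent constraints on $Q_x$ from $C^1$ regularity alone: a plain Taylor argument would require $C^2$. The device of sliding along the family of rays $\xi = tx$ packages the full first-order data into a polynomial identity in $t$, and the numerical factor $2$ coming from the eigenvalue ratio between the blocks $\Omega$ and $2\Omega$ is exactly what makes these constraints incompatible (the factor $1$, i.e.\ equal diagonal blocks, would give a solvable commutator equation and no obstruction).
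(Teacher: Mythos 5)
Your proof is correct and takes essentially the same route as the paper: both reduce to the Sylvester-type identity $\Omega S_{12} - 2S_{12}\Omega = x^2 S_{11}J_0$ coming from the off-diagonal block of the symmetry condition, and both exploit the disjointness of the spectra of $\Omega$ and $2\Omega$ together with a Taylor expansion of $S_{12}$ at the origin to reach the contradiction; your ray $\xi = tx$ device is simply a careful way to justify matching the quadratic coefficients using only $C^1$ regularity (which the paper states more tersely), and your direct observation that $Q\equiv 0$ on the $\xi$-axis efficiently packages $\Sigma_0=\Sigma_2=0$. One small slip worth noting, though harmless here: the $(2,2)$ block does not force $R$ to commute with $\Omega$ --- it yields $2[R,\Omega] = x^2\bigl(J_0 Q - Q^*J_0\bigr)$, a constraint coupling $R$ to $Q$ --- but you never actually use any constraint on $R$.
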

 
 \begin{proof}
 Fix  $\eta = 1$. Suppose that $S (\xi,  x) $ is a  $C^1$ symmetrizer near $(x, \xi) = (0,0)$.  
  We can assume that $S$ has real coefficients. Using the block notation 
 \begin{equation}
S  = \begin{pmatrix} S_{11} &  S_{12}  \\  S_{21}   & S_{22}\end{pmatrix}, 
\end{equation}
the symmetry conditions imply  
 \begin{equation}
 \label{exple2b} 
 \Omega  S_{12}   -   2 S_{12} \Omega  = x^2 J_0 S_{11} , 
 \qquad J_0 = \begin{pmatrix} 1  & 0\\ 0  & 0 \end{pmatrix}.
  \end{equation}
This is a linear system in $S_{12}$ and since 
$\Omega $ and $2 \Omega$ have no common eigenvalue it has a unique solution. 
 
 If $S_{12}$ is $C^1$ near the origin, plugging  its Taylor expansion
 $\Sigma_0 + x \Sigma_1 + \xi \Sigma_2$ in \eqref{exple2b}
 and using the notation   $\Omega = x \Omega_1 + \xi \Omega_2$, yields at first order
  \begin{equation*}
  \Omega_1 \Sigma_0 -  2 \Sigma_0 \Omega_1 = \Omega_2 \Sigma_0 -  2 \Sigma_0 \Omega_2 = 0 
\end{equation*}
  which implies that $\Sigma_0 = 0$. The term in $\xi^2$ is
   \begin{equation*}
\Omega_2 \Sigma_2 -  2 \Sigma_2 \Omega_2 = 0 
\end{equation*}
  showing that $\Sigma_2 = 0$. The term in $x \xi$ is then 
  \begin{equation*}
  \Omega_2 \Sigma_1 -  2 \Sigma_1 \Omega_2 = 0 
\end{equation*}
  implying that $\Sigma_1 = 0$, which is incompatible with the equation given by
  the term in $x^2$: 
  \begin{equation*}
\Omega_1 \Sigma_1 -  2 \Sigma_1 \Omega_1 =  - 2 J_0 S_{11} (0, 0)  \ne 0
\end{equation*}
  since $S_{11}(0,0)$ must be definite positive. 
 \end{proof}



 \subsection{Existence of smooth symmetrizers for generic double eigenvalues}
 
 Consider a symbol $ \tau \Id  + A(a, \xi)  $  which is strongly hyperbolic in the time direction, thus admitting a bounded 
 symmetrizer $S(a, \xi)$. 
 At  $(\underline a, \underline \xi)$, $\underline \xi \ne 0$, the characteristic polynomial   $p(a, \tau, \xi) = \det (\tau \Id  + A(a, \xi) ) $
 has roots $\underline \tau_j$ of multiplicity $m_j$.  Near this point,  
 can be smoothly factored 
 \begin{equation}
\label{factor}
 p(a, \tau, \xi)  = \prod_j  p_j (a, \tau , \xi)
\end{equation}
with $p_j $ of order $m_j $. 
 
 \begin{ass} In a neighborhood of 
 $(\underline a, \underline \xi)$, $ \underline \xi \ne 0$, the roots of $p$  are either of constant multiplicity 
 or of multiplicity at most two. 
 
 In the second case, we assume that the multiplicity is two on a smooth manifold $\cM$. Denoting by 
 $p_j$ the corresponding factor in $\eqref{factor}$, we further assume that either  
 
 i)  $\cM$ has codimension one and the the discriminant of   $p_j$ vanishes on $\cM$ at finite order, 
 
 \noindent or
 
 ii) $\cM$ has codimension one and the the discriminant of   $p_j$ vanishes on $\cM$ exactly at order two. 
 
 \end{ass} 
 
 \begin{theo}
 \label{theodouble}
 Under these assumptions, there is a smooth symmetrizer $S(a, \xi)$ on a 
 a neighborhood of 
 $(\underline a, \underline \xi)$. 
 \end{theo}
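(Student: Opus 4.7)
My plan is to reduce, via a smooth block-diagonalization of $\tau\Id+A$, to constructing a smooth symmetrizer for each factor $p_j$ in \eqref{factor} separately. Since the $p_j$ are pairwise coprime smooth functions of $(a,\xi)$, the Riesz projectors
\begin{equation*}
\Pi_j(a,\xi)\;=\;\frac{1}{2\pi i}\oint_{\gamma_j}\bigl(z\Id+A(a,\xi)\bigr)^{-1}\,dz,
\end{equation*}
with $\gamma_j$ a small loop enclosing the roots of $p_j$, depend smoothly on $(a,\xi)$ and yield an $A$-invariant decomposition $\CC^N=\bigoplus_j E_j(a,\xi)$. Choosing smooth bases of the $E_j$ gives a smooth change of basis $P(a,\xi)$ with $P^{-1}AP=\mathrm{diag}(A_1,\ldots,A_r)$, and a smooth symmetrizer for each block $A_j$ lifts to one for $A$ via $P^{-*}\mathrm{diag}(S_j)P^{-1}$. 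For a constant-multiplicity factor, the block $A_j-\lambda_j\Id$ is nilpotent; strong hyperbolicity forces $A_j$ to be diagonalizable, hence $A_j=\lambda_j\Id$ and $S_j=\Id$ trivially works.

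\textbf{Double-eigenvalue block.} For a factor of order two, write $A_j=\lambda_j\Id+M$ with $\mathrm{tr}\,M=0$, so $\det M=-\delta$ where $4\delta$ is the discriminant of $p_j$. Strong hyperbolicity makes $M$ pointwise diagonalizable over $\RR$, hence $\delta\ge 0$ and $M$ vanishes identically on $\cM$. In case (ii), $\delta$ vanishes exactly to order two on $\cM=\{x_1=0\}$, so $\delta=x_1^2 h$ with $h>0$ smooth, and one reads off $M=x_1 N$ with $N=\D_{x_1}M|_{\cM}$ smooth and $\det N=-h<0$. In case (i), the order of vanishing is some even integer $2m$, and I seek a smooth factorization $M=x_1^m N$ with $\det N=-h<0$. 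Once this is in hand, $N$ has two distinct real eigenvalues at every point, its eigenprojectors $\Pi_\pm$ are smooth, and $T:=\Pi_+^*\Pi_++\Pi_-^*\Pi_-$ is a smooth symmetric definite-positive matrix with $TN$ symmetric. Then $TA_j=\lambda_j T+x_1^m\,TN$ is symmetric, so $S_j:=T$ is the desired block symmetrizer.

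\textbf{Main obstacle.} The hard step is promoting the pointwise bound $\|M\|\le C\sqrt{\delta}\sim x_1^m$, which follows from the uniform $\RR$-diagonalizability supplied by the bounded symmetrizer, to the smooth factorization $M=x_1^m N$ with $N$ of full rank in case (i). I would expand $M=\sum_{k\ge 1}x_1^k M_k$ along $\cM$ and show inductively that $M_1=\cdots=M_{m-1}=0$: each partial sum $\sum_{k=1}^{\ell}x_1^k M_k$ inherits a strong-hyperbolicity condition at the appropriate scaling, which translates into $\det\bigl(\sum_{k=1}^{\ell}x_1^k M_k\bigr)\le 0$ in a neighborhood of $\cM$ for every $\ell$, forcing the leading nonzero Taylor coefficient of $M$ along $\cM$ to occur exactly at order $m$. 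Granting this factorization, the construction above produces a smooth symmetrizer on a neighborhood of $(\underline a,\underline\xi)$ and completes the proof.
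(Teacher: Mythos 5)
Your block reduction via Riesz projectors and your treatment of the finite-order, codimension-one case are essentially the paper's argument: one factors $M = A_j - \lambda_j\Id = \vp^{m} N$ with $N$ of full rank near $\underline\rho$, and $T := \Pi_+^*\Pi_+ + \Pi_-^*\Pi_-$ built from the spectral projectors of $N$ gives a smooth block symmetrizer. A small remark on your ``main obstacle'': once you have the pointwise bound $\|M\|\le C|\vp|^{m}$ with $M$ smooth, the factorization $M=\vp^{m}N$ with $N$ smooth is immediate from the Taylor--Hadamard lemma (the $x_1$-derivatives of $M$ of orders $<m$ vanish on $\cM$), and $\det N\ne 0$ near $\underline\rho$ follows from $\vp^{2m}\det N = \det M = -\delta$ together with the exactness of the order; your proposed induction on leading Taylor coefficients is superfluous.

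The genuine gap is the codimension-two case, which your proposal does not address at all. The statement of Assumption~4.3(ii) contains a typo (both items read ``codimension one''), but the proof makes the intended reading unambiguous: case (ii) concerns a double crossing set $\cM=\{\vp=\psi=0\}$ of codimension two, with the discriminant vanishing exactly to second order, i.e.\ $\Delta\ge\eps_1(\vp^2+\psi^2)$ together with the complementary upper bound. Reading (ii) literally as codimension one, as you did, makes it a degenerate subcase ($m=1$) of (i) and gives it no independent content. In the actual codimension-two case your argument breaks: one obtains $M=\vp A_1+\psi A_2$ where $A_1,A_2$ are traceless with distinct real eigenvalues at $\underline\rho$, but they need not commute, so $M$ is not of the form $\vp^m N$ with $N$ invertible and there is no smooth eigenbasis and no smooth spectral projectors of $M$ near $\cM$. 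Consequently $T=\Pi_+^*\Pi_++\Pi_-^*\Pi_-$ is not available. The paper handles this by a sequence of normalizations---conjugate $A_1$ to $\mathrm{diag}(-1,1)$, replace $\vp$ by $\vp-\re a_2\,\psi$ to force $\re a_2=0$, use the reality of $\Delta$ (equation~\eqref{realdelta}) and a diagonal conjugation to make $b_2$ real---and then writes down the explicit Hermitian symmetrizer $\begin{pmatrix}\re c_2 & i\,\im a_2\\ -i\,\im a_2 & b_2\end{pmatrix}$, whose positivity is precisely the quantitative strong-hyperbolicity bound \eqref{positiv22}. That construction, and the observation that one may normalize away the off-diagonal real parts, is the heart of the theorem and is missing from your proposal.
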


 \begin{proof}
 The construction is local in $ \rho =  (a, \xi)$ and one can perform a block reduction 
 of $A$ near $\underline \rho$ and it is sufficient to construct a symmetrizer 
 for each. The blocks are either diagonal an thus symmetric, or of dimension  two. 
 Eliminating the trace, it is sufficient to consider matrices 
 \begin{equation}
A (\rho) =   \begin{pmatrix} - a & b \\ c & a \end{pmatrix}  . 
 \end{equation}
 The hyperbolicity condition is that  the discriminant 
 $\Delta =  a^2  + b c $ is real and non negative. 
 Strong  hyperbolic, holds if and only if  there is $\eps > 0$ such that 
 \begin{equation}
 \label{stronghyp22}
 \Delta = a^2  + b c \ge \eps (|a| ^2 + | b |^2 + |c|^2). 
 \end{equation}
 Our assumption is that $\Delta$ vanishes on a manifold $\cM$, 
 at finite order if  $\mathrm{codim\,} \cM = 1$ and at order two if $\mathrm{codim \,}  \cM = 2$.

 { \bf a) } If $\Delta $ vanishes at finite order on a manifold of codimension $1$ of equation $\{ \vp = 0\}$, then 
 \eqref{stronghyp22} implies that for some integer $k$, 
 \begin{equation}
 A = \vp^k  A_r 
 \end{equation}
 with $\det A_r \ne 0 $ and still strongly hyperbolic. Thus $A_r$ has distinct real eigenvalues and is 
 therefore smoothly diagonalizable. 
 
 \bigbreak
 
 {\bf b) } Suppose that $\Delta$ vanishes exactly at second order on $\cM$
 given by the equations $\{ \vp = \psi = 0\}$.  This means that 
 $\Delta \ge \eps_1 (\vp^2 + \psi^2)$. 
 Together whith \eqref{stronghyp22}, this  implies that $A$ vanishes 
 on $\cM$ and that  
 \begin{equation}
A = \vp  A_1  + \psi A_2
\end{equation}
and  $A_1$ and $A_2$ have distinct real eigenvalues at $\underline \rho$. We can smoothly conjugate $A_1$ 
to a real diagonal and traceless form and changing $\vp$ we are reduced to the case where 
$$
A_1 = \begin{pmatrix}
   -1   &0    \\
     0 &  1
\end{pmatrix}, \qquad A_2 = \begin{pmatrix}
   - a_2     & b_2     \\
    c_2  &    a_2
\end{pmatrix}.
$$
  Moreover, changing $\vp$ to $\vp - \re a_2 \psi$, we can assume that  $\re a_2 = 0$. 
  Since $\Delta $ is real 
 \begin{equation}
\label{realdelta}
  2 \vp    \im a_2  + \psi  \im (b_2 c_2 ) = 0 
\end{equation}
  Moroever,  \eqref{stronghyp22} implies for $\vp = 0$
 \begin{equation}
\label{positiv22}
  \re (b_2 c_2) >  (\im a_2)^2
\end{equation}
  and this remains true in a neighborhood of $\underline \rho$.  In particular 
  $b_2 (\underline \rho ) \ne 0$ and conjugating by a diagonal matrix with diagoanal entries
  $b_2 / | b_2 |$ and $1$ changes $b_2$ into $|b_2$, meaning that we can assume that 
  $b_2$ is real. Having performed this reductions, one easily checks using  \eqref{realdelta}  that 
  \begin{equation}
\begin{pmatrix}
     \re c_2  &   i \im a_2 \\
   - i \im a_2   &   b_2 
\end{pmatrix}
\end{equation}
  is a smooth symetrizer for $\vp A_1 + \psi A_2$, which is definite positive by \eqref{positiv22}.  
\end{proof}


\subsection{Ill posedness for non Lipschitz symmetrizers}

Consider the system \eqref{exple1} with $a= a(x) = | x|^\alpha$.   For $\eta$ large and $\beta   $ to be determined, we look for solutions of $L_a U = 0$ 
of the form
\begin{equation}
\label{illp1}
U(t, x, y) = e^{ i \beta \sqrt \eta t + i y \eta}  \begin{pmatrix}
      u(\sqrt \eta x)    \\ v(\sqrt \eta x) \\ w(\sqrt \eta x) 
\end{pmatrix} . 
\end{equation}
With $\eps = \eta^{-\alpha/2}$,  
the equation $LU = 0$ is equivalent to 
\begin{equation}
\label{illp2}
v (x) = \frac{i }{\beta} ( \D_x -  i \eps x a  ) u (x)  , \qquad  w 
=  - \frac{1 }{\beta} ( 1  +  \eps^2    a^2  ) u (x)
\end{equation}
and the scalar equation for the first component is 
\begin{equation*}
\Big( \beta^2   +  ( \D_x +  i \eps x a  )   ( \D_x -  i \eps x a (x) )  - x^2 (1+ \eps^2 a^2) \Big) u = 0 
\end{equation*}
that is, since $\D_x(xa) =  (\alpha+1) a$, 
\begin{equation}
\label{ill3}
\Big( \beta^2   +  \D_x ^2   - x^2  - i \eps (\alpha+1)  a \Big) u = 0 . 
\end{equation}
The example has been cooked up precisely to get an eigenvalue problem for 
a perturbation of the harmonic oscillator.

When $\alpha = 0$, $\eps = 1$ and  $u(x) =  e^{ - \mez x^2} $ is a  a solution when 
\begin{equation}
\label{ill4}
\beta^2 = i -1. 
\end{equation}
Choosing the root with negative imaginary part, this yieds exact solutions of $L_a U = 0$ of the form 
\begin{equation}
\label{illp5}
U_\lambda (t, x, y) =  \int   e^{ i \beta \sqrt \eta t + i y \eta}  e^{- \mez \eta x^2}
( U_0 + \sqrt \eta x U_1)  \vp (\eta / \lambda) d \eta 
\end{equation}
with constant vectors $U_0 $ and $U_1$ not equal to $0$ and 
$\vp \in C^\infty_0 (\RR)$ with support in the interval $[1, 2]$. 
 The exponential growth of 
 $e^{ i t \sqrt \eta \beta}$ implies that there is no control 
 of any $H^{-s}$ norm at positive time by an $H^{s'}$ norm of the initial data. 
 This can be localized in $(x, y)$ and
 
 \begin{prop}
When $a = 1$, $L_a$ has bounded symmetrizers but the Cauchy problem for \eqref{exple1} is ill posed in $L^2$ but also in 
$C^\infty$
 \end{prop}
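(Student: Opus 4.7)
The first half of the claim — existence of bounded symmetrizers for $L_1$ — is already the content of the lemma on Example~1 with constant nonzero $a$, so no new argument is needed there.

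For the ill-posedness the plan is to cash in the construction \eqref{illp1}--\eqref{illp5}. With $a\equiv 1$ (hence $\alpha=0$, $\eps=1$) the scalar reduction \eqref{ill3} becomes $(\beta^2 + \D_X^2 - X^2 - i)u = 0$, and the Gaussian ground state $u(X) = e^{-X^2/2}$ of the harmonic oscillator satisfies it with $\beta^2$ as in \eqref{ill4}. Taking the root with $\im \beta = -\kappa < 0$ gives $|e^{i\beta\sqrt\eta\,t}| = e^{\kappa\sqrt\eta\,t}$, which grows in $\eta$ for every $t>0$. The companion components $v,w$ from \eqref{illp2} are polynomials in $X$ times the same Gaussian and hence Schwartz, so \eqref{illp5} produces, for each $\lambda\ge 1$, an exact solution $U_\lambda$ of $L_1U=0$ which is Schwartz in $(x,y)$. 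A direct Plancherel computation — using the $y$-frequency support $\eta\in[\lambda,2\lambda]$ and the fact that $e^{-\eta x^2/2}$ has $L^2$-mass of order $\eta^{-1/4}$ — then gives
\begin{equation*}
\big\| U_\lambda(0)\big\|_{H^{s'}(\RR^2)} \lesssim \lambda^{s'+1/4},
\qquad
\big\| U_\lambda(t)\big\|_{L^2(\RR^2)} \gtrsim \lambda^{1/4}\, e^{\kappa\sqrt\lambda\,t}.
\end{equation*}
Since $e^{\kappa\sqrt\lambda\,t}$ beats any power of $\lambda$, no estimate $\|u(t)\|_{L^2} \le C\|u_0\|_{H^{s'}}$ can hold, and \eqref{estim2} fails. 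This is $L^2$ ill-posedness.

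To upgrade to $C^\infty$ ill-posedness I would form a lacunary superposition. With $\lambda_k = 4^k$ the $y$-frequency bands $[\lambda_k,2\lambda_k]$ are pairwise disjoint, so Sobolev norms are additive across $k$; choosing $c_k = e^{-\lambda_k^{1/4}}$ the series $U_0 := \sum_k c_k U_{\lambda_k}(0)$ converges in every $H^{s'}$ and defines a Schwartz initial datum. For every $t>0$, however,
\begin{equation*}
\big\| c_k U_{\lambda_k}(t)\big\|_{L^2}^2 \gtrsim \lambda_k^{1/2}\, \exp\bigl(2\kappa\sqrt{\lambda_k}\,t - 2\lambda_k^{1/4}\bigr) \longrightarrow + \infty,
\end{equation*}
because $\sqrt{\lambda_k} = 2^k$ eventually dominates $\lambda_k^{1/4} = 2^{k/2}$. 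By the same Fourier disjointness, the partial sums $\sum_{k\le N} c_k U_{\lambda_k}(t)$ are thus unbounded in every Sobolev space, so any Fr\'echet-continuous solution semigroup on $C^\infty$ starting from $U_0$ would yield a contradiction via linearity. Local ill-posedness follows by multiplying $U_0$ with a smooth compactly supported cutoff in $(x,y)$; the Schwartz decay of each $U_{\lambda_k}$ makes the commutator error $O(\lambda_k^{-\infty})$ and the conclusion is unaffected.

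The single real obstacle is the bookkeeping of Sobolev exponents in the two norms of $U_\lambda$ and the simultaneous calibration of $(\lambda_k,c_k)$ to give a smooth datum but uncontrolled growth at every positive time; the underlying mechanism — a complex eigenvalue of the frozen-coefficient reduction producing a WKB-type growing mode — is by now a standard Lax--Mizohata device.
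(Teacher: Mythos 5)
Your argument is correct and follows the paper's proof essentially verbatim: the same family of exact solutions $U_\lambda$, the same observation that $\im\beta<0$ produces growth $e^{\kappa\sqrt\eta\,t}$, and the same conclusion that this destroys any $H^{s'}\to H^{-s}$ estimate. The only real difference is that you flesh out the $C^\infty$ ill-posedness with an explicit lacunary superposition $\sum_k c_k U_{\lambda_k}$ yielding a concrete smooth datum whose partial-sum solutions blow up, whereas the paper simply notes that the exponential growth of $U_\lambda$ defeats all Sobolev estimates and invokes (implicitly) the closed-graph characterization of $C^\infty$ well-posedness -- your lacunary series is a correct elaboration but is not logically needed for the stated proposition.
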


Consider now the case $\alpha \in ]0, 1[$. By standard pertubation theory the eigenvalue problem 
\eqref{ill3}
\begin{equation}
\label{ill5}
u = e^{ - \mez x^2} + \eps u_1 , \qquad \beta^2 = 1 + i  \eps \lambda_1 + O(\eps^2)
\end{equation}
with 
\begin{equation}
\label{ill6}
\lambda_1 \int  e^{ -  x^2} dx =   (\alpha+1) \int  a(x) e^{ -   x^2} dx  > 0 
\end{equation}
so that $\lambda_1 > 0$. Therefore one can choose 
$
\beta = - 1 - \frac{i}{2} \eps \lambda_1 + O(\eps^2)
$
\begin{equation}
\im (\beta \sqrt \eta) \sim \mez \lambda_1  \eta^{\mez (1- \alpha) } < 0
\end{equation}
which is arbitrarily large  if $\alpha < 1$.  This provides solutions of $L_a U = 0$, 
with exponentially amplified   $L^2$ norms implying the following proposition.

 \begin{prop}
When $a = | x|^\alpha$ with $0< \alpha < 1$, $L_a$ has $C^\alpha$  symmetrizers 
but the Cauchy problem for \eqref{exple1} is ill posed in $L^2$. 
 \end{prop}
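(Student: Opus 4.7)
I would first invoke Corollary~\ref{corholdsym} for the symmetrizer claim. The system \eqref{exple1} has the form $\cL(a(x), \D_t, \D_x, x\D_y)$, and one checks directly that $\cL_0$ (the case $a=0$) is Hermitian symmetric. The determinant computation
\begin{equation*}
\det \cL(a, \tau, \xi, \sigma) = \tau^3 - \tau\bigl[\sigma^2(1+a^2) + \xi^2 - a^2\sigma^2\bigr] = \tau(\tau^2 - \xi^2 - \sigma^2)
\end{equation*}
shows that $\cL$ is strictly hyperbolic uniformly in $a$, so Corollary~\ref{corholdsym} with $a(x) = |x|^\alpha$ produces a $C^\alpha$ symmetrizer of $L_a$.

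For the $L^2$ ill-posedness, my plan is to exhibit, for each sufficiently large frequency $\eta$, an exact solution of $L_a U = 0$ of the wave-packet form \eqref{illp1}--\eqref{illp2} whose temporal amplification factor diverges to $+\infty$ with $\eta$. Following the reduction carried out around \eqref{ill3}, this reduces to solving the non-self-adjoint eigenvalue problem
\begin{equation*}
H_\eps u := \bigl(-\D_x^2 + x^2 + i\eps(\alpha+1)|x|^\alpha\bigr) u = \beta^2 u, \qquad \eps = \eta^{-\alpha/2},
\end{equation*}
for $\beta^2$ close to $1$. I would view $(H_\eps)_\eps$ as a Kato analytic perturbation of type (A) of the quantum harmonic oscillator $H_0 = -\D_x^2 + x^2$: the pointwise bound $|x|^\alpha \le 1 + x^2$ makes the multiplication by $i\eps(\alpha+1)|x|^\alpha$ relatively $H_0$-bounded with arbitrarily small relative bound as $\eps \to 0$. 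Since the ground state eigenvalue $1$ of $H_0$ is simple and isolated, the Riesz projector / contour integral argument produces a simple eigenvalue of $H_\eps$ with Rayleigh--Schr\"odinger expansion
\begin{equation*}
\beta^2(\eps) = 1 + i \eps \lambda_1 + O(\eps^2), \qquad \lambda_1 = \frac{\alpha+1}{\sqrt{\pi}} \int_\RR |x|^\alpha e^{-x^2}\,dx > 0,
\end{equation*}
with eigenfunction $u_\eps \in L^2(\RR)$ converging in $L^2$ to $e^{-x^2/2}$ as $\eps \to 0$.

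Selecting the square root $\beta(\eps) = -1 - \tfrac{i}{2}\eps\lambda_1 + O(\eps^2)$ so that $\im \beta(\eps) < 0$ gives
\begin{equation*}
\im\bigl(\beta(\eps)\sqrt\eta\bigr) = -\tfrac{1}{2}\lambda_1 \eta^{(1-\alpha)/2} + O\bigl(\eta^{(1-2\alpha)/2}\bigr) \longrightarrow -\infty
\end{equation*}
as $\eta \to \infty$, since $\alpha < 1$. The corresponding exact solution $U_\eta$ of $L_a U_\eta = 0$ then satisfies, on any fixed cube $K \subset \RR^2_{x,y}$,
\begin{equation*}
\frac{\|U_\eta(t, \cdot)\|_{L^2(K)}}{\|U_\eta(0, \cdot)\|_{L^2(K)}} \gtrsim \exp\bigl(\tfrac{1}{2}\lambda_1 \eta^{(1-\alpha)/2}\,t\bigr) \longrightarrow +\infty.
\end{equation*}
To turn this into a failure of $L^2$ well-posedness for genuinely localized initial data, I would form a narrow $\eta$-wave packet (as in \eqref{illp5}) and truncate in $(x,y)$ with a $C^\infty_0$ cutoff, using the Gaussian decay of $u_\eps$ to ensure that the truncation error is subdominant to the exponential amplification factor. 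This rules out any bound $\|u(t)\|_{L^2} \le C_t\|u_0\|_{L^2}$. The main technical obstacle is justifying the perturbation theory for $H_\eps$ with the non-smooth potential $|x|^\alpha$: however, $|x|^\alpha$ is bounded on compacts and grows sublinearly at infinity, so it is $H_0$-form-small, and the standard analytic perturbation of an isolated simple eigenvalue applies; the Rayleigh--Schr\"odinger computation and the spectral projector argument must simply be carried out in the genuinely non-self-adjoint setting, which is a standard exercise once relative boundedness is established.
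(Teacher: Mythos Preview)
Your proposal is correct and follows essentially the same route as the paper: the $C^\alpha$ symmetrizer comes from Corollary~\ref{corholdsym}, and the ill-posedness is obtained by the wave-packet ansatz \eqref{illp1}--\eqref{illp2}, reduction to the perturbed harmonic oscillator eigenvalue problem \eqref{ill3}, first-order perturbation of the ground state to get $\beta^2 = 1 + i\eps\lambda_1 + O(\eps^2)$ with $\lambda_1>0$, and then the square-root choice yielding $\im(\beta\sqrt\eta)\sim -\tfrac{1}{2}\lambda_1\eta^{(1-\alpha)/2}\to -\infty$. Your treatment is in fact more careful than the paper's, which simply invokes ``standard perturbation theory'' without discussing the non-self-adjoint, non-smooth-potential issues you address.
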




\section{Strong hyperbolicity of first order symbols}
 \label{sectionstroghyp}

 In this section we introduce the notion of strong hyperbolicity and show that is is equivalent 
 to the existence of symmetrizers. Next we discuss the existence of smooth 
 symmetrizers. We show that these notions are preserved by a change of  the time direction. 
For the convenience of the reader, we postpone to the appendix  
the proof of several independent results on matrices.

 \subsection{Basic properties} 
We denote by $\tilde x  \in \RR^{1+d}$ the time-space variables and by 
 $\tilde \xi  $ the dual variables.  
We consider  $N \times N$ first order system systems $  \sum_{j=0}^d  A_j \D_{\tilde x_j} + B $. 
Their characteristic determinant is $p(\tilde \xi) = \det\big(   \sum_{j=0}^d  i  {\tilde \xi_j}A_j + B\big) $, 
the principal part of which is $p_N (\tilde \xi) = \det\big(   \sum_{j=0}^d  i  {\tilde \xi_j}A_j \big) $

\begin{defi}

i)    $  \sum_{j=0}^d  A_j \D_{\tilde x_j} + B $  is said to be hyperbolic 
 in the direction $\nu  \in \RR^{1+d} $ if the characteristic determinant 
   is, that is  if 
$p_N (\nu) \ne 0$ and there is $\gamma_0$ such that  
 $p(i \tau \nu + \tilde \xi) \ne  0 $ for all $\xi \in \RR^{1+d}$ and all real   $| \tau |   >  \gamma_0$. 
 
ii)  $L =  \sum_{j=0}^d  A_j \D_{\tilde x_j} $ is strongly hyperbolic  in the direction $\nu$ if and only if for all matrix $B$,  $L+B$ is hyperbolic in the direction $\nu$.
 \end{defi}

The classical definition of hyperbolicity is that the roots of $p(i \tau \nu + \xi) \ne  0 $ are located in $\tau   <   \gamma_0$.  But, since hyperbolicity  in the direction $\nu$ implies hyperbolicity   in the direction $- \nu$, the definition above is equivalent to the usual one.

\begin{prop} 
\label{invhyp1}
 $L =  \sum_{j=0}^d  A_j \D_{\tilde x_j} $ is strongly hyperbolic  in the direction $\nu$ if and only if  
there is a constant $C $ such that 

\quad i) for all $\tilde \xi\in \RR^{1+d} $ and all 
matrix $B$, the roots  of $\det \big( L(\tilde \xi + \lambda \nu) + B\big) = 0 $ are located in the strip
$| \im \lambda | \le C | B|$,  

  With the same constant $C$, this condition is equivalent to 

\quad ii)   for all $(\gamma, \tilde \xi, u) \in \RR \times \RR^{1+d} \times \CC^N$: 
\begin{equation}
\label{resolvp1}
\big|  \gamma   u \big| \le C \big| L(\tilde \xi + i \gamma \nu) u \big| .  
\end{equation}

\end{prop}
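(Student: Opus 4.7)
The proposition contains two equivalences: strong hyperbolicity $\Leftrightarrow$ (i), and (i) $\Leftrightarrow$ (ii) with the \emph{same} constant $C$. The implication (i) $\Rightarrow$ strong hyperbolicity is automatic, since (i) supplies, for every matrix $B$, the explicit bound $\gamma_0(B) = C|B|$ on the imaginary parts of the roots, which is precisely the hyperbolicity of $L + B$ in the direction $\nu$.

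For the equivalence (i) $\Leftrightarrow$ (ii) with a shared constant, my plan is a rank-one perturbation trick. For (i) $\Rightarrow$ (ii), given $(\gamma, \tilde\xi, u)$ with $u \ne 0$, set $v = L(\tilde\xi + i\gamma \nu) u$. If $v = 0$, then $\lambda = i\gamma$ is a root of $\det L(\tilde\xi + \lambda \nu) = 0$, so (i) applied with $B = 0$ forces $\gamma = 0$ and (ii) holds trivially. Otherwise, form the rank-one matrix $B = -v u^*/|u|^2$, which has operator norm $|B| = |v|/|u|$ and satisfies $(L(\tilde\xi + i\gamma \nu) + B) u = 0$; thus $\lambda = i\gamma$ is a root of $\det(L(\tilde\xi + \lambda \nu) + B) = 0$ at the real covector $\tilde\xi$, and (i) gives $|\gamma| \le C|B| = C|v|/|u|$, i.e.\ (ii). Conversely, for (ii) $\Rightarrow$ (i), any root $\lambda = \sigma + i\gamma$ of the perturbed pencil admits an eigenvector $u \ne 0$ satisfying $L(\tilde\xi' + i\gamma \nu) u = -B u$ with $\tilde\xi' = \tilde\xi + \sigma \nu \in \RR^{1+d}$, and (ii) gives $|\gamma| |u| \le C|Bu| \le C|B| |u|$.

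The substantive direction is strong hyperbolicity $\Rightarrow$ (i). I proceed by contradiction: if no such $C$ exists, there are sequences $\tilde\xi_n, B_n, \lambda_n$ with $\lambda_n$ a root for $(\tilde\xi_n, B_n)$ and $|\im \lambda_n| > n |B_n|$. Under the joint scaling $(\tilde\xi, B, \lambda) \mapsto (t \tilde\xi, t B, t \lambda)$, which preserves the characteristic equation up to a factor $t^N$, I normalize $|B_n| = 1$ and extract a subsequence so that $B_n \to B_\infty$ on the unit sphere of matrices. If the $\tilde\xi_n$ remain bounded, continuity of polynomial roots in their coefficients produces a limiting root of $\det(L(\tilde\xi_\infty + \lambda \nu) + B_\infty) = 0$ with infinite imaginary part, contradicting the hyperbolicity of $L + B_\infty$ guaranteed by strong hyperbolicity. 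In the remaining case $s_n = |\tilde\xi_n| \to \infty$, setting $\eta_n = \tilde\xi_n / s_n$ and $\mu_n = \lambda_n / s_n$, the $\mu_n$ are eigenvalues of $M(\eta_n) + L(\nu)^{-1} B_n / s_n$, an $O(1/s_n)$ perturbation of $M(\eta_n) := L(\nu)^{-1}L(\eta_n)$.

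The main obstacle is this last scenario: eigenvalues of matrices can depend only H\"older continuously on perturbations near a nontrivial Jordan block, so a priori $|\im \mu_n|$ could decay more slowly than $1/s_n$ and allow $s_n|\im \mu_n|$ to blow up. The resolution is a Kreiss-matrix-theorem style argument: strong hyperbolicity, applied to the whole two-parameter family of perturbations $M(\eta) + L(\nu)^{-1} B'$ as $B'$ ranges over matrices of small norm, forces each $M(\eta)$ to be diagonalizable with condition number bounded uniformly in $\eta$ on the unit sphere. This uniform diagonalizability upgrades the dependence of eigenvalues on the perturbation from H\"older to Lipschitz with a uniform constant, yielding $|\im \mu_n| = O(1/s_n)$ and hence $s_n |\im \mu_n| = O(1)$, contradicting $s_n |\im \mu_n| > n$. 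Establishing this uniform Kreiss bound from the bare assumption that $L+B$ is hyperbolic for every $B$ is the technical heart of the argument.
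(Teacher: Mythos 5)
Your treatment of (i) $\Leftrightarrow$ (ii) is correct and nicely direct; the rank-one construction $B = -vu^*/|u|^2$ is essentially the same device the paper packages as Lemma~\ref{leminv} (invertibility vs.\ resolvent norm), and the converse via an eigenvector of the perturbed pencil is clean. The implication (i) $\Rightarrow$ strong hyperbolicity also holds, with the small extra remark (which the paper makes) that \eqref{resolvp1} at $\tilde\xi = 0$ forces $L(\nu)$ invertible, so that $p_N(\nu) \ne 0$ is included in the conclusion.

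The problem is the hard direction, strong hyperbolicity $\Rightarrow$ (i). Your compactness reduction correctly isolates the only serious case, $|\tilde\xi_n| \to \infty$, and you correctly diagnose that one then needs a uniform Kreiss-type resolvent bound for the family $M(\eta) = L(\nu)^{-1}L(\eta)$ on the unit sphere to upgrade H\"older dependence of eigenvalues to Lipschitz. But this is precisely where the argument becomes circular: a uniform bound $|(M(\eta)-z)^{-1}| \le C/|\im z|$ over $|\eta| = 1$ is, after rehomogenizing, exactly the estimate \eqref{resolvp1}, i.e.\ condition (ii) itself. Pointwise semisimplicity of each $M(\eta)$ does follow from strong hyperbolicity by your Jordan-block argument, but a uniform condition-number bound on the diagonalizing matrices does \emph{not} follow from the bare qualitative statement that $L+B$ is hyperbolic for every $B$ by any soft compactness argument --- the conditioning could degenerate along a sequence $\eta_n$ in exactly the way your contradiction hypothesis posits. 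Announcing the Kreiss bound as ``the technical heart'' and then not supplying it leaves the main implication unproved.

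The paper fills this gap by a genuinely different mechanism. It exploits the algebraic identity $\det\bigl(L(\tilde\xi) + sB_{j,k}\bigr) = \det L(\tilde\xi) + s\,m_{jk}(\tilde\xi)$, where $B_{j,k}$ is the elementary matrix and $m_{jk}$ the corresponding cofactor. Strong hyperbolicity then says the hyperbolic polynomial $\det L$ stays hyperbolic under the one-parameter additive perturbations $s\,m_{jk}$, and a theorem on hyperbolic polynomials (H\"ormander, Thm.~12.4.6, going back to G\aa rding) yields the domination $|m_{jk}(\tilde\xi + i\nu)| \le C\,|\det L(\tilde\xi + i\nu)|$. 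Since the entries of $L^{-1}$ are cofactors over the determinant, this directly gives the resolvent bound \eqref{resolvp1} for $\gamma = 1$, and homogeneity does the rest. There is no compactness or contradiction argument, and the quantitative uniformity comes entirely from the polynomial inequality, not from linear algebra. To repair your proof you would need either to import this polynomial-theoretic input or to produce an independent proof of the uniform Kreiss bound for the family $M(\eta)$; the latter is not, to my knowledge, attainable by elementary perturbation arguments alone.
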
  

Other equivalent formulations can be deduced from Proposition~\ref{propstronghyp}  below.

 \begin{proof}
{\bf a) }
 By homogeneity,   $ii)$  is equivalent to the condition 
 \begin{equation}
  \label{eqC2prime}
  \qquad  \big| \im \lambda | \ge C \quad \Rightarrow \quad
 \big|   L(\tilde \xi +  \lambda \nu )^{-1} \big| \le   1 .       
 \end{equation} 
By Lemma~\ref{leminv} below, this is equivalent to the condition 
that for all matrix $B$ such that $| B| < 1$,  $L(\tilde \xi +  \lambda \nu ) + B$ is invertible   
when $| \im \lambda | \ge C $. This is equivalent to saying  that the roots of  
$\det\big(L(\tilde \xi + \lambda \nu)  + B \big)  = 0$ are contained in $\{ | \im \lambda | < C\} $. 
By homogeneity, this is equivalent to $i)$.  

{\bf b) }    Note that \eqref{resolvp1}  applied to $\tilde \xi=0$ 
 implies that $L(\nu) $ is invertible. It is then clear that $i)$ implies 
 strong hyperbolicity.  Conversely, assume that $L$ is strongly hyperbolic. 
 Consider the matrix $ B_{j, k}$ with all entries equal to zero, except the entry 
of indices $(j, k)$ equal to one. Then 
\begin{equation*}
\det \Big( L(\tilde  \xi ) + B_{j,k} \Big) = 
\det  L(\tilde  \xi )  +  m_{j k}(\xi )
\end{equation*} 
where $m_{j,k}$ is the cofactor of indices $(j, k)$ in the matrix $L(\tilde  \xi )$. 
Following   Theorem 12.4.6 in \cite{Ho1}, the hyperbolicity condition 
implies that   there is a constant $C$ such that 
  \begin{equation*}
\label{estimqp} 
\big|   m_{j, k}  (\tilde \xi + i   \nu) \big| \le  C    \big| \det L (\tilde \xi + i  \nu) \big|. 
\end{equation*}
Since 
$L(\tilde \xi + i \nu)^{-1}  = (\det L(\tilde \xi + i \nu))^{-1} \widetilde M(\tilde \xi + i \nu)$ where 
$\widetilde M$ is the matrix with entries $(-1)^{j+k} m_{k,j}$, 
this implies that there is another constant $C$ such that \eqref{resolvp1} is satisfied for $\gamma =1$. By homogeneity, it is also satisfied for all 
$ \gamma$.
   \end{proof}

 When  $p$ is hyperbolic in the direction $\nu$, then  the component of 
$\nu $ in the set $\{ p _N   \ne 0 \}$ is a convex open cone, which we denote by   $\Gamma (\nu) $  
and $p$ is hyperbolic in any direction $\nu' \in \Gamma (\nu)$. 
This property is also true for strong hyperbolicity. We give a quantitative 
version of this result, as we will need is later on. 

\begin{lem}
\label{lemopencone}
Suppose that $L$ is hyperbolic in the direction $\nu$.  For all  $\nu'\in \Gamma(\nu)$, 
the ball centered at $\nu'$ of radius $\eps : =     |p_N( \nu') | / K |\nu'|^{N-1}$  is contained in 
$\Gamma(\nu)$, where 
$ K = \max_{| \tilde \xi | \le 2}  | \na_{\tilde \xi}  \det L(\tilde \xi) | $.
\end{lem}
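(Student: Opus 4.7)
The plan is to use homogeneity of $p_N = \det L$ to reduce to the case $|\nu'|=1$, and then apply a first-order gradient estimate on the ball. Since $\Gamma(\nu)$ is by definition the connected component of $\nu$ in $\{p_N \ne 0\}$, it is enough to show that the open ball $B(\nu',\eps)$ lies entirely in $\{p_N \ne 0\}$: being connected and containing $\nu' \in \Gamma(\nu)$, it will then automatically sit in the same component $\Gamma(\nu)$.

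First I would normalize. Because $p_N$ is homogeneous of degree $N$, setting $\mu = \nu'/|\nu'|$ gives $p_N(\mu) = |\nu'|^{-N} p_N(\nu')$, and the announced radius rescales as $\eps/|\nu'| = |p_N(\mu)|/K$. Since both the hypothesis $\nu' \in \Gamma(\nu)$ and the conclusion $B(\nu',\eps) \subset \Gamma(\nu)$ are invariant under positive scaling of $\nu'$, I may assume $|\nu'|=1$, so that $\eps = |p_N(\nu')|/K$. Next I would verify that the ball stays in the region $\{|\tilde\xi| \le 2\}$ where the gradient bound defining $K$ is available. By Euler's identity applied to the homogeneous polynomial $p_N$, $N p_N(\nu') = \nu' \cdot \nabla p_N(\nu')$, hence $|p_N(\nu')| \le |\nabla p_N(\nu')|/N \le K/N \le K$, which gives $\eps \le 1/N \le 1$. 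Therefore for every $\tilde\xi \in \overline{B(\nu',\eps)}$ one has $|\tilde\xi| \le |\nu'|+\eps \le 2$, and $|\nabla p_N| \le K$ along the whole segment from $\nu'$ to $\tilde\xi$.

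Finally, the mean-value inequality yields, for any $\tilde\xi$ with $|\tilde\xi - \nu'| < \eps$,
$$|p_N(\tilde\xi) - p_N(\nu')| \le K\,|\tilde\xi - \nu'| < K\eps = |p_N(\nu')|,$$
so $p_N(\tilde\xi) \ne 0$ throughout $B(\nu',\eps)$, and the connectedness argument above closes the proof. There is no serious obstacle here: the argument is just a first-order expansion of the characteristic polynomial combined with the topological characterization of $\Gamma(\nu)$, and the only thing to watch is the homogeneity bookkeeping so that the specific radius $|p_N(\nu')|/(K|\nu'|^{N-1})$ comes out cleanly.
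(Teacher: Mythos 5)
Your proof is correct and follows essentially the same route as the paper's: normalize to $|\nu'|=1$ by homogeneity, use Euler's identity to bound $|p_N(\nu')|$ by $K$ (hence $\eps\le 1$, keeping the ball inside the region where the gradient bound applies), and then conclude by the mean-value inequality. You are a bit more explicit than the paper about the factor $N$ in Euler's identity and about the connectedness step, but the argument is the same.
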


\begin{proof}
By homogeneity, one can assume that 
$|\nu' | = 1$. In this case,  
$|p_N(\nu '')| >  | p_N(\nu')|  - K | \nu'- \nu"|  $ if $\nu" - \nu'| \le 1$. 
Noticing that $| p_N (\nu') | = | \nu' \na_{\tilde \xi} p_N(\nu') | \le K$, this implies that 
 $|p_N(\nu '')| > 0  $ if $|\nu" - \nu'| \le \eps $. 
\end{proof}

\begin{prop}
\label{propinvnu}
Suppose that $L(\tilde \xi ) $ satisfies \eqref{resolvp1} and let $\nu' \in \Gamma(\nu) $  such that 
$\big| \det L(\nu') \big| \ge c  > 0$.  Then 
\begin{equation}
\label{resolvp2}
\big|  \gamma   u \big| \le C_1 \big| L(\tilde \xi + i \gamma \nu') u \big| .  
\end{equation}
with $C_1 =  K  C | \nu| /  c | \nu'| $  and 
$ K = \max_{| \tilde \xi | \le 2}  | \na_{\tilde \xi}  \det L(\tilde \xi) | $
 \end{prop}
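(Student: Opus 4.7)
The plan is to rework the argument of Proposition~\ref{invhyp1}(b) in the direction $\nu'$ in place of $\nu$ and to read off the constants. The cofactor formula $L(\tilde\xi)^{-1} = \widetilde M(\tilde\xi)/\det L(\tilde\xi)$ makes the conclusion \eqref{resolvp2} equivalent (up to an innocuous combinatorial factor) to the pointwise estimate $|m_{j,k}(\tilde\xi+i\gamma\nu')| \le (C_1/|\gamma|)\,|\det L(\tilde\xi+i\gamma\nu')|$ for each cofactor $m_{j,k}$ of $L$; similarly, \eqref{resolvp1} is equivalent to the corresponding estimate in direction $\nu$. The whole problem is thereby reduced to a statement about the scalar polynomial $\det L$ together with its $N^2$ cofactors.

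To obtain such a cofactor estimate I would use exactly the device of part~(b): the matrix $B_{j,k}$ whose only nonzero entry equals one gives $\det(L+B_{j,k}) = \det L + m_{j,k}$, and by condition (i) of Proposition~\ref{invhyp1} each perturbed polynomial $\det L + m_{j,k}$ is hyperbolic in the direction $\nu$. Because $\nu'\in\Gamma(\nu)$, these polynomials are automatically hyperbolic in direction $\nu'$ as well, and Theorem~12.4.6 of \cite{Ho1} — the Phragm\'en--Lindel\"of bound for hyperbolic polynomials already invoked in the proof of Proposition~\ref{invhyp1}(b) — controls the ratio $m_{j,k}/\det L$ along imaginary shifts in direction $\nu'$. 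The lower bound $|\det L(\nu')| \ge c$ normalizes the leading coefficient of the one-variable polynomial $\lambda \mapsto \det L(\tilde\xi+\lambda\nu')$, and the bound $K = \max_{|\tilde\xi|\le 2}|\nabla_{\tilde\xi}\det L|$ controls its subordinate coefficients; together these two structural parameters pin down the constant produced by Hörmander's theorem. The ratio $|\nu|/|\nu'|$ finally enters by homogeneity when one rescales $\nu'$ to unit norm to apply the theorem and then scales back.

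The main obstacle will be the explicit bookkeeping in the Phragm\'en--Lindel\"of step so that the constant emerges in the precise form $C_1 = KC|\nu|/(c|\nu'|)$ rather than as an unspecified function of $\nu'$. A naïve perturbation writing $L(\tilde\xi+i\gamma\nu') = L(\tilde\xi+i\gamma\nu) + i\gamma L(\nu'-\nu)$ does not succeed, because the correction $i\gamma L(\nu'-\nu)$ has norm of order $|\gamma|$ and cannot be absorbed in a Neumann series unless $|\nu'-\nu|$ is small; iterating such a bound along a path joining $\nu$ to $\nu'$ in $\Gamma(\nu)$ would only yield a constant with an exponential dependence on the path length. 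The Phragm\'en--Lindel\"of route, which treats the whole hyperbolicity cone at once through a single complex-analytic estimate on the ratio of polynomials, is precisely what allows the final constant to come out in the clean algebraic form stated in the proposition.
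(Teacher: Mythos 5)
Your proposal takes a genuinely different route from the paper's proof, and the route has a real gap.

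The paper does \emph{not} re-apply H\"ormander's Theorem~12.4.6 (the Phragm\'en--Lindel\"of bound) in the new direction $\nu'$. Instead, it reuses the hypothesis \eqref{resolvp1} with its given constant $C$ via the equivalence of Proposition~\ref{invhyp1}: \eqref{resolvp1} is equivalent to the statement that all roots of $\det\big(L(\tilde\xi + \lambda\nu) + B\big)=0$ lie in the strip $|\im\lambda| \le C|B|$. Starting from that root-location estimate in the direction $\nu$, the paper propagates it to the direction $\nu'$ by G\aa rding's intermediate-direction device: it picks $\nu'' = \nu' - \eps\nu$ with $\eps = c/K$, which Lemma~\ref{lemopencone} guarantees lies in $\Gamma(\nu)$, and then argues by a root-counting/continuity argument that $p(\tilde\xi + i\gamma\nu + i\sigma\nu'')\ne 0$ whenever $\gamma > C|B|$ and $\sigma\ge 0$; writing $\gamma\nu + \sigma\nu'' = \sigma\nu'$ by taking $\gamma = \eps\sigma$, this yields nonvanishing of $p(\tilde\xi + i\sigma\nu')$ for $\sigma > C|B|/\eps = KC|B|/c$. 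Converting back via Proposition~\ref{invhyp1} gives \eqref{resolvp2} with the stated constant, and $|\nu|/|\nu'|$ enters by homogeneity. The constants $c$ and $K$ therefore appear only through the quantitative opening of the cone in Lemma~\ref{lemopencone}, and $C$ is exactly the constant already supplied by the hypothesis; nothing is ever re-extracted from Theorem~12.4.6.

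The gap in your proposal is precisely the step you flag as the ``main obstacle'': you propose to invoke Theorem~12.4.6 a second time, now in the direction $\nu'$, and to claim that the resulting constant is pinned down by $c$, $K$ and $|\nu|/|\nu'|$. But Theorem~12.4.6 gives an existence statement for a constant depending on the polynomials and the direction; there is no mechanism in that theorem that outputs a constant in the clean algebraic form $KC|\nu|/(c|\nu'|)$, and the hand-wave about ``normalizing the leading coefficient'' and ``controlling the subordinate coefficients'' does not close this. You correctly rule out the naive Neumann-series/path argument, but you replace it with a second appeal to a qualitative theorem that would only yield some unspecified constant $C_1(\nu')$. The content of Proposition~\ref{propinvnu} is exactly the uniformity and explicit form of $C_1$, and that is obtained in the paper not by another Phragm\'en--Lindel\"of estimate but by transporting the already-given estimate from $\nu$ to $\nu'$ along the cone. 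To repair your argument you would essentially have to rediscover the deformation step (G\aa rding's device together with Lemma~\ref{lemopencone}) and at that point the cofactor/Phragm\'en--Lindel\"of detour becomes unnecessary.
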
 

\begin{proof}
Let $p (\tilde \xi) = \det ( L(\tilde \xi) + B)$ and 
$p_N = \det L(\xi)$ its principal part. 
Suppose that $|\nu | = | \nu' | = 1$. The general case follows immediately. 
By Proposition~\eqref{invhyp1}, one has 
\begin{equation}
\label{eq501}
p(\tilde \xi +  i \gamma \nu) \ne 0  \quad \mathrm{for} \   | \gamma | > C | B| . 
\end{equation}

We choose $\nu'' =  \nu' - \eps \nu $  with $\eps =  c / K$. 
 By Lemma~\ref{lemopencone}  $\nu'' \in \Gamma(\nu)$ and  following   
 \cite{Gard1, Ho1}, (see  e.g. \cite{Ho1}  vol 2, chap 12),  one has
 \begin{equation}
\label{eq51}
p( \tilde \xi + i \gamma \nu + i \sigma \nu'') \ne 0  \quad \mathrm{for} \    \gamma   > C ,  \   \sigma \ge 0.  
\end{equation}
and also for $  \gamma < -  C |B| $ and $\sigma \le 0$.  
Indeed,   all the roots of  $ p_N(t \nu + \nu'')$ are real and negative: 
\begin{equation}
\label{eq50}
  p_N (t\nu + \nu'') = 0
\quad \Rightarrow  \quad  t < 0. 
\end{equation}
By \eqref{eq501},   
$p(\tilde \xi + i \gamma \nu + z\nu'') = 0$  has no root on the real axis, so that 
the number of roots in $\{ \im z \ge  0 \} $ is independent of 
$\tilde \xi$ and $\gamma > C | B| $. Taking $\tilde \xi = 0$ and letting  $\gamma  $ tend to 
$+ \infty$,   \eqref{eq50}  this implies that this number is equal to zero implying \eqref{eq51}. 
The proof for $\gamma \le - C | B|$ and $\sigma \le 0$ is similar. 

 Substituting $\nu'' = \nu' - \eps \nu$ in \eqref{eq51} and choosing $\gamma = \eps' \sigma$ 
 we conclude that 
 $$
 p(\tilde \xi + i \sigma \nu' ) \ne 0 
 $$
  if $  \eps | \sigma |  > C | B| $.  Applying again  Proposition~\ref{invhyp1},  
  \eqref{resolvp2} follows. 
 \end{proof}

In most applications, the coefficients of the system $L$ and even the direction $\nu$ may depend 
on parameters, such as the space time variables, the unknown itself etc. The direction $\nu$ itself 
can be seen as a parameter.  
This leads to consider families of systems, $L(a, \D_{\tilde x})$ and directions $\nu_a$,  depending on 
 parameters $a \in \cA$. Their symbol is 
  \begin{equation}
 \label{gesymb}
 L (a, {\tilde \xi})   = \sum_{j = 0}^d  \tilde \xi_j A_j  (a)  . 
 \end{equation} 
 When considering such families, we always assume that 
the matrices $A_j(a)$  and the directions $\nu_a$ are  uniformly bounded,

 \begin{defi} 
We say that the family $ L(a, \, \cdot\, )$   is uniformly strongly hyperbolic  in the direction $\nu_a$ for 
$a \in \cA$ if ,

i)  $c_\cA:=  \inf_{a \in \cA}|  \det L(a, \nu_a) |  > 0$, 

ii) the equivalent conditions $i)$ and  $ii) $ of Proposition~$\ref{invhyp1}$  are satisfied with a constant 
$C$ independent of $a \in \cA$.   
 
  \end{defi}
 
The next result is a immediate consequence of  Proposition~\ref{propinvnu}. 
It shows that  one can enlarge the set of strongly hyperbolic direction, preserving uniformity: 
let  $\Gamma_a$ denote the component of $\nu_a$ in $\{\det L(a, \tilde \xi) \ne 0\}$; for  $c \in ] 0, c_{\cA}] $  and $C >0$ 
introduce the set 
\begin{equation}
\label{tildecA}
\widetilde \cA = \big\{ (a, \nu) ;   a \in \cA,  \nu \in \Gamma_a,  | \nu | \le C   ,  | \det L(a, \nu ) |  \ge c
\big\}. 
\end{equation}

\begin{prop} 
\label{prophyppol}
Suppose that  $ L(a, \, \cdot\, )$   is uniformly strongly hyperbolic  in the direction $\nu_a$ for 
$a \in \cA$.  
Then  $ L(a, \, \cdot\, )$   is uniformly strongly hyperbolic  in the direction $\nu$, for 
$(a, \nu) \in \widetilde \cA$. 
  
\end{prop}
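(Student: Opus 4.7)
The plan is to apply Proposition~\ref{propinvnu} pointwise in the parameter $a \in \cA$ to the symbol $L(a, \cdot\,)$ and then verify that the resulting constant is uniform in $(a, \nu) \in \widetilde \cA$. Fix $(a, \nu) \in \widetilde \cA$. By hypothesis, $L(a, \cdot\,)$ satisfies estimate \eqref{resolvp1} in the direction $\nu_a$ with a constant $C$ independent of $a$. Since $\nu \in \Gamma_a$ and $|\det L(a, \nu)| \ge c$, Proposition~\ref{propinvnu} applies (with $\nu_a$ playing the role of $\nu$ and $\nu$ playing the role of $\nu'$) and yields
\begin{equation*}
|\gamma u| \le C_1(a, \nu)\,\bigl| L(a, \tilde \xi + i \gamma \nu) u \bigr|,
\qquad
C_1(a, \nu) = \frac{K_a\, C\, |\nu_a|}{c\, |\nu|},
\end{equation*}
where $K_a = \max_{|\tilde \xi| \le 2} |\nabla_{\tilde \xi} \det L(a, \tilde \xi)|$.

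The remaining task is to check that each factor in $C_1(a, \nu)$ is uniformly controlled. The constant $C$ is uniform by the hypothesis, and $c > 0$ is a fixed parameter of the definition of $\widetilde \cA$. The standing assumption that $A_j(a)$ are uniformly bounded ensures that the coefficients of the polynomial $\det L(a, \tilde \xi)$ are uniformly bounded, and hence $K_a$ is uniformly bounded. Likewise, $|\nu_a|$ is uniformly bounded above by the same standing assumption.

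The one point that deserves attention is the uniform lower bound on $|\nu|$, which is not part of the definition of $\widetilde \cA$ but follows from homogeneity. Since $\det L(a, \tilde \xi)$ is homogeneous of degree $N$ in $\tilde \xi$, the uniform boundedness of the $A_j(a)$ produces a constant $M < \infty$ such that $|\det L(a, \tilde \xi)| \le M |\tilde \xi|^N$ for all $a \in \cA$ and all $\tilde \xi$. Combined with $|\det L(a, \nu)| \ge c$, this gives $|\nu| \ge (c/M)^{1/N} > 0$, uniformly on $\widetilde \cA$. Together with the upper bound $|\nu| \le C$ built into the definition, this yields the required two-sided control and hence a uniform bound on $C_1(a, \nu)$.

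I do not anticipate any real obstacle: the analytic content sits entirely in Proposition~\ref{propinvnu}, whose proof uses the Lax--G{\aa}rding convex-cone argument. The present statement is a bookkeeping exercise in tracking the dependence of constants on the parameter and extracting a uniform lower bound on $|\nu|$ from the determinant condition.
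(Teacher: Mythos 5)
Your proof is correct and follows the same route as the paper, which simply declares the proposition to be an immediate consequence of Proposition~\ref{propinvnu}; you have just spelled out the uniformity bookkeeping that the paper leaves implicit. The observation that the determinant condition $|\det L(a,\nu)|\ge c$ combined with the homogeneity bound $|\det L(a,\tilde\xi)|\le M|\tilde\xi|^N$ forces a uniform lower bound on $|\nu|$ is exactly the right point to make explicit, and the rest of the constant-tracking is sound.
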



\subsection{Symmetrizers} 

We start with the notion of \emph{full symmetrizer} introduced in \cite{FriLa1}.    

\begin{defi}
A full  symmetrizer for $L(\tilde \xi) = \sum \tilde \xi_j A_j$   is a bounded 
matrix ${\bf S} (\tilde \xi) $, homogeneous of degree $0$ 
on $\RR^{1+d} \backslash\{0\}$, such that 
$\bS(\tilde \xi) L(\tilde \xi)$ is self adjoint. 
It is positive in the direction $\nu \ne 0$ if there is a constant $c> 0$ such that 
for all $\tilde \xi \ne 0$ : 
\begin{equation}
\label{posfullsym}
u \in \ker L(\tilde \xi) \quad \Rightarrow \quad 
\re \big( \bS(\tilde \xi) L( \nu ) u, u \big) \ge c | u|^2 . 
\end{equation}
Given a family of systems $L(a, \, \cdot)$ and directions $\nu_a$, a bounded family 
of full symmetrizers  $\bS(a, \, \cdot \,) $ for $a\in \cA$ is said to be uniformly positive 
in the direction $\nu_a$ if the constant $c$ above can be chosen independent of $a$. 
\end{defi} 

In \eqref{posfullsym}, $ (f, u ) $ denotes the hermitian scalar product in $\CC^N$. 
More intrinsically, it should be thought as the antiduality between covectors $f$ and vectors $u \in \CC^N$,  
 so that the adjoint $P^*$ of a the matrix $P$ satisfies
 $(f, Pu) = (P^*f, u)$.

 Note that away from the 
the characteristic variety, the symmetrizer can be chosen arbitrarily and thus 
contains no information. 

\bigbreak

The may be more familiar notion of symmetrizer depends on the choice 
of a time direction $\nu$. Choosing  a space $\EE $ such that $ \RR^{1+ d} = \EE \oplus \RR \nu$
the symmetrizer is seen as a function of frequencies $\xi \in \EE$.  
Since the open cone  $\Gamma(\nu)$ is strictly convex, one can also require that 
$\Gamma(\nu) \cap \EE = \emptyset$. 
 In a more intrinsic  definition, it  can be seen as a symmetrizer 
invariant by translation in the direction $\nu$, or defined on $\RR^{1/d} / \RR \nu$.
To avoid technicalities, we choose the first option choosing a space $\EE$. 
and when considering families $(L(a), \nu_a)$, we assume that we can choose  $\EE$  
in such a way  that there is a compact set $K$ such that 
\begin{equation}
\label{unifdecomp}
\forall a \in \cA,  \nu_a \in K \qquad \mathrm{and} \qquad    K \cap \EE =  \emptyset . 
\end{equation}
This condition can always be met locally. In particular, uniformly in $a \in \cA$: 
\begin{equation}
\label{unifdecomp2}
c  (| \xi | + | \tau|)  \le  | \xi + \tau \nu_a |   \le C  ( | \xi | + | \tau|), \qquad \xi \in \EE, \ \tau \in \RR. 
\end{equation}

\begin{defi}
A   symmetrizer for $L(\tilde \xi) = \sum \tilde \xi_j A_j$  in the direction $\nu$  is a  bounded 
matrix $S( \xi) $, homogeneous of degree $0$ in  $\xi \in \EE$ such that  
  $S( \xi) L(  \xi)$  and $S( \xi) L(\nu)$ are
self adjoint  for all $  \xi $  and there is $c > 0$ such that : 
\begin{equation}
\label{possym}
\forall \xi \in \EE\backslash\{0\}, \ \forall  u  \in \CC^N \, , \qquad    \big( S(\xi) L( \nu ) u, u \big) \ge c | u|^2 . 
\end{equation}
Given a family of systems $L(a, \, \cdot)$ and directions $\nu_a$ satisfying \eqref{unifdecomp}, a uniform   family 
of  symmetrizers  $S(a, \, \cdot \,) $ for $\{L(a, \cdot), \nu_a\} $, $ a\in \cA$, is a bounded family $S(a, \cdot)$ of symmetrizers 
for $L(a, \cdot)$  in the direction$ \nu_a$, such that the constant $c$  can be chosen independent of $a$. 
\end{defi} 

\begin{rem}
\label{remdefsym}
\textup{ $\bS (\xi + \tau \nu) = S(\xi)$, is almost a full symmetrizer, except that it not necessarily 
defined   on the line $\RR \nu$. But  \eqref{possym} implies that 
$L(\nu ) $ is invertible and one  can always choose 
 $\bS(\nu)$ so that  
$\bS(\nu) L(\nu)$ is definite positive. 
This modification can be extended to a conical neighborhood of $\nu$
where $L (\tilde \xi))$ remains invertible.  This construction 
obviously preserves positivity. 
 This remains true for families and uniformity can be preserved}. 
\end{rem}

The existence of symmetrizers is equivalent to strong hyperbolicity, in the 
following sense.

\begin{theo}
\label{theostronghyp}
Consider a family $\{L(a, \, \cdot\,), \nu_a, a \in \cA\} $.

i) Assuming \eqref{unifdecomp}, $L(a,\cdot )$ is strongly hyperbolic in the direction $\nu_a$ 
if and only if  there exists a  uniform  family of symmetrizers   $S(a, \cdot) $.

ii)  $L(a,\cdot )$ is strongly hyperbolic in the direction $\nu_a$ if and only if 
  
\qquad a)  is hyperbolic in the direction $\nu_a$ and $\inf_{a \in \cA} | \det L(a, \nu_a) | > 0$

\qquad b) there is a   bounded  family of full symmetrizer $\bS(a, \cdot) $  which is 
uniformly  positive in the direction $\nu_a$.

\end{theo}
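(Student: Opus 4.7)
\bigbreak

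\textbf{Proof plan.} I will prove (i) first, then bootstrap it to obtain (ii).

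\emph{Part (i), symmetrizer $\Rightarrow$ strong hyperbolicity.} Given a uniform family $S(a,\xi)$, I decompose $\tilde\xi = \xi + \tau\nu_a$ along $\EE \oplus \RR\nu_a$ and write
\[
S(a,\xi)\,L(a,\tilde\xi + i\gamma\nu_a) = S(a,\xi)L(a,\xi) + \tau S(a,\xi)L(a,\nu_a) + i\gamma\, S(a,\xi)L(a,\nu_a).
\]
The first two summands are self-adjoint and the third is $i$ times the uniformly positive definite matrix $S(a,\xi)L(a,\nu_a)$. Taking the imaginary part of the inner product with $u$ yields $|\gamma|\,c|u|^2 \le \|S\|_\infty\,|L(a,\tilde\xi + i\gamma\nu_a)u|\,|u|$, which is \eqref{resolvp1} with a constant independent of $a$.

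\emph{Part (i), strong hyperbolicity $\Rightarrow$ symmetrizer.} For $\xi\in\EE$ set $M(a,\xi) := L(a,\nu_a)^{-1}L(a,\xi)$. The resolvent estimate \eqref{resolvp1}, rewritten via $\tilde\xi + i\gamma\nu_a = L(a,\nu_a)(M(a,\xi) + (\tau+i\gamma)\Id)$ and the lower bound on $|\det L(a,\nu_a)|$, is equivalent to uniform bounds on the resolvent of $M(a,\xi)$ in the upper and lower half-planes; this is precisely uniform diagonalizability (the paper's appendix furnishes the matrix-theoretic equivalence referenced by Proposition~\ref{propstronghyp}). The classical Kreiss-type symmetrizer $T(a,\xi) := \sum_j \Pi_j(a,\xi)^* \Pi_j(a,\xi)$, built from the spectral projectors of $M(a,\xi)$, is then uniformly bounded and uniformly positive definite, and $TM$ is self-adjoint. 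Setting $S(a,\xi) := T(a,\xi) L(a,\nu_a)^{-1}$ yields $S\,L(a,\xi) = TM$ self-adjoint and $S\,L(a,\nu_a) = T$ uniformly positive definite, i.e.\ a uniform symmetrizer.

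\emph{Part (ii), $\Rightarrow$.} Strong hyperbolicity implies both parts of (a): hyperbolicity is immediate, and $\inf_a |\det L(a,\nu_a)| > 0$ is built into Definition~4.4. For (b), apply part (i) to obtain $S(a,\xi)$ and define $\bS(a,\tilde\xi) := S(a,\pi_\EE\tilde\xi)$, where $\pi_\EE$ is the projection onto $\EE$ along $\nu_a$; by \eqref{unifdecomp2} this is a bounded, $0$-homogeneous matrix. Then $\bS(a,\tilde\xi) L(a,\tilde\xi) = S L(\xi) + \tau\,S L(\nu_a)$ is self-adjoint, and $\re(\bS(a,\tilde\xi)L(a,\nu_a) u,u) = (S(a,\xi)L(a,\nu_a)u,u) \ge c|u|^2$ on all of $\CC^N$, so uniform positivity in $\nu_a$ holds (a fortiori on $\ker L(a,\tilde\xi)$). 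The singular line $\RR\nu_a$ is handled as in Remark~\ref{remdefsym}.

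\emph{Part (ii), $\Leftarrow$.} This is the main obstacle, because positivity of $\bS$ is only guaranteed on $\ker L(a,\tilde\xi)$. My plan is to establish the resolvent bound \eqref{resolvp1} by splitting $u = u_0 + u_1$ with $u_0 \in \ker L(a,\tilde\xi)$ and $u_1$ in a complement on which $L(a,\tilde\xi)$ is uniformly invertible; quantitative control of this decomposition will come from (a) together with the G{\aa}rding factorization of $\det L(a,\tilde\xi + \lambda\nu_a)$ into real linear factors in $\lambda$, which yields $|\det L(a,\tilde\xi + i\gamma\nu_a)| \ge c|\gamma|^N$. For $u_0$, the identity
\[
\im\bigl(\bS(a,\tilde\xi) L(a,\tilde\xi + i\gamma\nu_a) u_0, u_0\bigr) = \gamma \re\bigl(\bS(a,\tilde\xi) L(a,\nu_a) u_0, u_0\bigr) \ge c\gamma|u_0|^2
\]
uses self-adjointness of $\bS L(\tilde\xi)$ and positivity on the kernel. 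For $u_1$, a direct bound $|L(a,\tilde\xi)u_1| \gtrsim |u_1|$ controls $|L(a,\tilde\xi + i\gamma\nu_a) u_1|$ after absorbing the $\gamma L(\nu_a)$ contribution. Cross terms are handled by Cauchy-Schwarz and absorption for $|\gamma|$ below a threshold, while for $|\gamma|$ large the trivial bound using $L(a,\nu_a)$ uniformly invertible takes over. All constants depend only on the uniform bounds on $\bS$, $L(a,\nu_a)^{-1}$, and the positivity constant, so strong hyperbolicity holds uniformly in $\cA$. The technical crux — the quantitative geometry of $\ker L(a,\tilde\xi)$ and its complement in terms of $\det L$ — is where I expect to lean most heavily on the appendix's matrix lemmas.
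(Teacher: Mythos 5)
Your arguments for part (i) in both directions and for the forward implication of part (ii) are sound and essentially coincide with the paper's: part (i) reduces to $A(a,\xi) = L(a,\nu_a)^{-1}L(a,\xi)$ and to Proposition~\ref{propstronghyp} (the imaginary-part computation you perform is the proof of (iv)$\Rightarrow$(iii) there, and the Kreiss-type $T = \sum\Pi_j^*\Pi_j$ is the paper's construction in step~(b)), and the passage from $S$ to $\bS$ is Remark~\ref{remdefsym}.

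The gap is in (ii)$\Leftarrow$. You plan to control the splitting $u = u_0 + u_1$ with $u_0 \in \ker L(a,\tilde\xi)$ quantitatively, and you claim this control comes from (a) ``together with the G{\aa}rding factorization of $\det L(a,\tilde\xi+\lambda\nu_a)$ into real linear factors.'' That will not work: hyperbolicity plus a lower bound on $\det L(\nu_a)$ only give $|\det L(a,\tilde\xi+i\gamma\nu_a)| \ge c|\gamma|^N$, which by Cramer's rule yields $|L(a,\tilde\xi+i\gamma\nu_a)^{-1}| \lesssim |\gamma|^{-N}$ and says nothing about the angle between $\ker L(a,\tilde\xi)$ and a complement (a nilpotent Jordan block is compatible with all of this). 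The uniform bound on the projector onto $\ker L$ parallel to the range of $L(\nu_a)^{-1}L(\tilde\xi)$ comes from the \emph{positivity hypothesis} (b), not from (a); the precise mechanism is Proposition~\ref{propPSP}, which shows that self-adjointness of $\bS L$ together with positivity of $\re(\bS L(\nu_a)\cdot,\cdot)$ on $\ker L$ forces $0$ to be a semi-simple eigenvalue of $L(\nu_a)^{-1}L(\tilde\xi)$ with eigenprojector bounded by $|\bS L(\nu_a)|/c$. Once you have that (applied at every shifted point $\tilde\xi + \tau\nu_a$, so that every real eigenvalue is semi-simple with uniformly bounded projector, and hyperbolicity from (a) guarantees all eigenvalues are real), the Cauchy--Schwarz/absorption machinery you sketch becomes unnecessary: the equivalence (ii)$\Leftrightarrow$(iii) of Proposition~\ref{propstronghyp} delivers the resolvent estimate directly. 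So the missing idea is precisely that the quantitative kernel/range geometry is a consequence of (b), established via Proposition~\ref{propPSP}, and not something extractable from (a) and determinant bounds.
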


\begin{proof}
{\bf i) } If $L(a, \cdot) $ is uniformly strongly hyperbolic in the direction $\nu_a$, 
then $L(a, \nu_a)$ and $L(a, \nu_a)^{-1}$ are uniformly bounded, 
Similarly, \eqref{possym} implies that   $L(a, \nu_a)^{-1}$ is bounded. 

In both case,  $ A(a,  \xi) =  L(a, \nu_a)^{-1} L(a,  \xi)$ for $|  \xi | = 1$ is bounded, and 
strong hyperbolicity is equivalent to  the existence of a constant $C$ such that 
for all $\lambda$, $a$, $ \xi \in \EE $ and $u$: 
\begin{equation}
\label{resolvestp11}
| \im \lambda |  \big| u \big| \le C  \big| A(a,   \xi) u  - \lambda  u \big| . 
\end{equation}
By Proposition~\ref{propstronghyp} this is equivalent to the existence of
 a symmetric matrix 
 $S_A(a,  \xi)$, bounded and uniformly definite positive, such that $S_A A$ is symmetric. 
 This is equivalent to the condition that   $S (a,   \xi ) =  S_A(a,  \xi) L(a, \nu_a)^{-1}$  is a symmetrizer 
 for $L(a, \cdot) $ bounded and uniformly positive in the direction $\nu_a$.

 \medbreak
 
 {\bf ii) } Strong hyperbolicity implies the existence of a symmetrizer, thus of a full positive symmetrizer 
 by Remark~\ref{remdefsym}. Hence it only remains to prove the converse part of $ii)$.

Let $\bS(a, \cdot) $ be a full symmetrizer for $L(a, \cdot) $, positive in the direction 
$\nu_a$. Suppose in addition that $L(a, \cdot)$ is hyperbolic in this direction, 
so that $L(a, \nu)$ is invertible. Then,  Proposition~\ref{propPSP} implies that 
when $\ker L(a, \tilde \xi) \ne \{0\}$, $0$ is a
semi simple  eigenvalue of $ A(a, \tilde \xi) = L(a, \nu_a)^{-1} L a, \tilde \xi)$. Moreover,  the spectral projectors, 
that is the projectors on $\ker A = \ker L $ parallel to the range of $A$ , are uniformly bounded. 
 Applied to $\tilde \xi + \tau \nu$, this implies that all the real eigenvalues of  
 $A(a, \tilde \xi)$ are semi-simple and all the corresponding spectral projectors are uniformly 
 bounded. Since $L$ is hyperbolic, all the eigenvalues are real and with Proposition~\ref{propstronghyp}
 this implies that  \eqref{resolvestp11} 
 is satisfied.  
\end{proof}



\subsection{Smooth symmetrizers}

We now consider a family $\{L(a, \, \cdot\,), \nu_a, a \in \cA\} $ where $\cA$ is an open set of 
some space $\RR^m$. We assume that the condition \eqref{unifdecomp} is satisfied.

\begin{theo}
\label{theomaj} 
Suppose that the coefficients of $L(a, \cdot)$ are continuous, [resp. $W^{1, \infty}$]
[resp. $C^\infty$] on $\cA$ and that the mapping $a \mapsto \nu_a$ is 
continuous, [resp. $W^{1, \infty}$]
[resp. $C^\infty$].  Then, there exists a  full symmetrizer $\bS(a, \tilde \xi)$ 
which is  continuous, [resp. $W^{1, \infty}$]
[resp. $C^\infty$] on $\cA \times S^d$ and uniformly positive in the direction $\nu_a$, 
if and only if there is a symmetrizer $S(a, \xi)$ 
which is  continuous, [resp. $W^{1, \infty}$]
[resp. $C^\infty$] on $\cA \times S^{d-1}$ and uniformly positive in the direction $\nu_a$, 
\end{theo}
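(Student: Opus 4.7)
The theorem asserts, for each of the regularity classes $C^0$, $W^{1,\infty}$, and $C^\infty$, an equivalence between existence of a positive full symmetrizer $\bS$ and existence of a symmetrizer $S$ in the direction $\nu_a$ of matching smoothness. The proof splits into two implications of very different character.

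Forward direction ($S \Rightarrow \bS$), by partition of unity. Using the decomposition $\tilde \xi = \xi + \tau \nu_a$ with $\xi \in \EE$ allowed by \eqref{unifdecomp}, and letting $\pi_\EE$ denote the projection on $\EE$ parallel to $\nu_a$, I would set $\bS_{\EE}(a, \tilde \xi) := S(a, \pi_\EE \tilde \xi)$ off the line $\RR \nu_a$. This is homogeneous of degree zero, and since $SL(\xi)$ and $SL(\nu_a)$ are both self-adjoint, so is $\bS_{\EE} L(\tilde \xi) = S L(\xi) + \tau SL(\nu_a)$. On a uniform conic neighborhood $U$ of $\RR \nu_a$ where $L(a, \tilde \xi)$ is invertible (such a $U$ exists by continuity from the invertibility of $L(a, \nu_a)$), take $\bS_0(a, \tilde \xi) := L(a, \tilde \xi)^{-1}$, so $\bS_0 L \equiv \Id$ is trivially self-adjoint. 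With a smooth $[0,1]$-valued homogeneous cutoff $\chi$ supported in $U$ and equal to $1$ near $\RR \nu_a$, the combination $\bS := \chi\, \bS_0 + (1-\chi)\, \bS_{\EE}$ inherits regularity from its pieces, and $\bS L$ is self-adjoint as a real linear combination of self-adjoint matrices. Positivity: wherever $\ker L \ne \{0\}$ we have $\tilde \xi \notin U$, so $\chi = 0$ and $\bS = \bS_{\EE} = S$, and the uniform positive-definiteness of $SL(\nu_a)$ on all of $\CC^N$ yields the restricted positivity of $\re(\bS L(\nu_a) u, u)$ required for $u \in \ker L$.

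Reverse direction ($\bS \Rightarrow S$), the substantive half. The hypothesis implies, via Theorem~\ref{theostronghyp}, that $A(a, \xi) := L(a, \nu_a)^{-1} L(a, \xi)$ has real, semi-simple spectrum with uniformly bounded spectral projectors. The goal is to produce a smooth self-adjoint positive-definite $R(a, \xi)$ with $RA = A^*R$, inheriting the regularity of $\bS$; then $S(a, \xi) := R(a, \xi) L(a, \nu_a)^{-1}$ is a symmetrizer of the stated regularity (both $SL(\xi) = RA$ and $SL(\nu_a) = R$ are self-adjoint, and $SL(\nu_a)$ is uniformly positive). The natural but generally non-smooth candidate, guided by the positivity hypothesis, is the spectral sum
\begin{equation*}
R(a,\xi) = \sum_k P_k(a,\xi)^* \, H\bigl(a,\, \xi - \lambda_k(a,\xi)\, \nu_a\bigr) \, P_k(a,\xi), \qquad H(a,\tilde \xi) := \re\bigl(\bS(a, \tilde \xi)\, L(a,\nu_a)\bigr),
\end{equation*}
with $P_k$ the spectral projector onto the $\lambda_k$-eigenspace of $A$. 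Semi-simplicity gives $R = R^*$ and $RA = A^*R$; positivity on each eigenspace follows from the identity $\ker(A - \lambda_k I) = \ker L(\xi - \lambda_k \nu_a)$ together with the positivity hypothesis for $\bS$ on this kernel; and uniform positivity of $R$ on $\CC^N$ follows by decomposing $u = \sum_k P_k u$ and invoking the uniform projector bound.

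The main obstacle is that the individual $P_k$ and $\lambda_k$ fail to be smooth at eigenvalue coalescences, even though the spectral sum above is globally well-defined. I would resolve this by rewriting the sum as a Riesz functional-calculus integral: a contour integral around the spectrum of $A$ of $H(\xi - z\nu_a)$ (extended off the real axis by an almost-holomorphic extension in $z$) flanked by the resolvent $(zI - A)^{-1}$ and its adjoint. Since the spectrum of $A$ is uniformly bounded, the contour can be chosen locally constant in $(a,\xi)$; the resolvent is smooth in $(a,\xi)$ away from the contour, so the integrand has the regularity of $\bS$, and the integral inherits this regularity. The technical heart is verifying that the contour integral collapses to the desired spectral sum at each $(a,\xi)$, so that uniform positivity is preserved through the coalescences via the uniform projector bound and the uniform positivity of $H$ on each $\ker L(\xi - \lambda_k \nu_a)$.
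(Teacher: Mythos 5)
Your forward direction ($S \Rightarrow \bS$) is essentially the paper's Remark~\ref{remdefsym}: pull $S$ back along the projection onto $\EE$, and modify on a conic neighborhood of $\RR\nu_a$ where $L$ is invertible so that $\bS L(\nu_a)$ stays definite positive there. That part is fine.

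The reverse direction has the right candidate formula --- the paper's Theorem~\ref{theomajmod} constructs exactly $\sum_\tau \Pi(\tau)^*\bS(\tau)\Pi(\tau)$ --- but your plan for proving its regularity has two genuine gaps. First, a sum of the form $\sum_k P_k^* H(\lambda_k) P_k$ is \emph{not} reproduced by a Riesz/Helffer--Sj\"ostrand integral ``flanked by the resolvent and its adjoint''. Writing $(\,\bar z - A^*)^{-1} = \sum_j (\bar z-\lambda_j)^{-1}P_j^*$ and expanding, the $j\neq k$ cross terms $\int \D_{\bar z}\tilde H\,(\bar z-\lambda_j)^{-1}(z-\lambda_k)^{-1}$ do not vanish, and the diagonal term $\int \D_{\bar z}\tilde H\,|z-\lambda_k|^{-2}$ is not the Cauchy integral that reproduces $H(\lambda_k)$. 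The paper sidesteps this with the identity \eqref{eqPiSPi} of Proposition~\ref{propPSP}, $\Pi^*\bS J\Pi = \Pi^*\bS J$, which together with self-adjointness collapses the two-sided sandwich to a one-sided functional-calculus expression $\sum_k \bS^*(\lambda_k)\Pi_k$. This reduction is the key algebraic step your sketch is missing, and it relies on the positivity hypothesis through Proposition~\ref{propPSP} (semi-simplicity plus the relation between $\ker L$, $\range L$, and $\bS$).

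Second, and more fundamentally, even for a genuinely one-sided spectral sum the almost-holomorphic-extension contour argument only proves the $C^\infty$ case. For $W^{1,\infty}$ data the best one can arrange is $\D_{\bar z}\tilde H$ bounded but not vanishing as $\im z\to 0$, and since the resolvent of a strongly hyperbolic matrix only gives $|(z-A)^{-1}|\lesssim |\im z|^{-1}$, the area integral $\int |\D_{\bar z}\tilde H|\,|\im z|^{-1}\,dx\,dy$ diverges logarithmically near the real axis. So the contour approach cannot deliver the Lipschitz case, which is precisely the case of interest in Theorems~\ref{L2WPLip} and \ref{invt}. The paper's Theorem~\ref{theoFC} handles each regularity separately, and the Lipschitz case rests on a nontrivial combinatorial estimate (Lemma~\ref{mainlem}) bounding $\sum_j S_j\,p_j$ in terms of eigenvalue gaps, with the increments $p_j$ of the spectral projectors controlled by Proposition~\ref{proplipproj}. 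You need a substitute for that machinery; the Helffer--Sj\"ostrand heuristic does not supply it.
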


\begin{proof}
By Remark~\ref{remdefsym} passing from a symmerizer to a full symmetrizer 
is immediate. The converse statement follows from a more general 
result given in Theorem~\ref{theomajext} where the construction 
is extended to other directions $\nu \in \Gamma(\nu_a)$. 
\end{proof}

 
 \subsection{Invariance by change of time}
 
Proposition~\ref{prophyppol} shows that strong hyperbolicity, thus the existence of bounded symmetrizers or 
of full symmetrizers, extends from $\nu_a$ to all directions in the  cone of hyperbolicity $\Gamma_a$, 
preserving uniformity in sets such as \eqref{tildecA}. 
We now prove that this is also true for smooth symmetrizers. 
The key point, is to prove that for a continuous full symmetrizer, 
positivity extends from $\nu_a$ to $\Gamma (\nu_a)$.

\begin{prop}
\label{propextpos}
Consider a family $\{L(a, \, \cdot\,), \nu_a, a \in \cA\} $ and assume that 
 $L(a, \cdot)$ is uniformly strongly hyperbolic in the direction $\nu_a$. 
For $c>0$ and $C$ given, let $\widetilde \cA$ as in \eqref{tildecA}. 

 Suppose that $\bS(a, \cdot )$ is a full symmetrizer of $L(a, \cdot)$ which depends continuously 
on $ \tilde \xi  \in \RR^{1+d} \backslash\{0\}$, such that  that $\bS(a, \cdot) $ is uniformly positive in the direction 
$\nu_a$. Then, $  \bS(a, \cdot) $  is uniformly positive in the direction $\nu$ for 
$(a, \nu) \in \widetilde \cA$. 
\end{prop}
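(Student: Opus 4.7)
The plan is to prove pointwise positivity of $\bS(a,\tilde\xi)$ in every direction $\nu\in\Gamma_a$ by a connectedness argument, then upgrade to uniformity by compactness. Fix $a\in\cA$ and a characteristic point $\tilde\xi$ (otherwise the statement is vacuous), and introduce the real-valued quadratic form
\[
q_\nu(u)\;=\;\re\bigl(\bS(a,\tilde\xi)L(a,\nu)u,\,u\bigr),\qquad u\in\ker L(a,\tilde\xi),
\]
which depends linearly and continuously on $\nu\in\RR^{1+d}$. The hypothesis reads $q_{\nu_a}(u)\ge c\,|u|^2$ uniformly in $(a,\tilde\xi)$, and the set
\[
\Omega \;=\; \{\nu\in\Gamma_a:q_\nu\text{ positive definite on }\ker L(a,\tilde\xi)\}
\]
is open and contains $\nu_a$. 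Since $\Gamma_a$ is convex (hence connected) by G\aa rding, it suffices to show that $\Omega$ is closed in $\Gamma_a$.

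For the closedness, suppose $\nu_n\in\Omega$ tend to $\nu^*\in\Gamma_a$ but $\nu^*\notin\Omega$. Then $q_{\nu^*}\ge 0$ in the limit, and some nonzero $u^*\in\ker L(a,\tilde\xi)$ satisfies $q_{\nu^*}(u^*)=0$. By Proposition~\ref{prophyppol}, $L(a,\cdot)$ is strongly hyperbolic in direction $\nu^*$, so Proposition~\ref{propPSP} makes $0$ a semi-simple eigenvalue of $A_{\nu^*}:=L(a,\nu^*)^{-1}L(a,\tilde\xi)$, giving the splitting $\CC^N=\ker L(a,\tilde\xi)\oplus \range A_{\nu^*}$. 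The aim is to derive $u^*\in \range A_{\nu^*}$, contradicting semi-simplicity. First I upgrade the real vanishing to the full Hermitian vanishing $(\bS(a,\tilde\xi)L(a,\nu^*)u^*,u^*)=0$: along $\eta=\tilde\xi+z\nu^*$, the full-symmetrizer property makes $(\bS(\eta)L(\eta)u^*,u^*)$ real, and by continuity of $\bS$ at $\tilde\xi$ together with $L(\tilde\xi)u^*=0$ this quantity equals $z(\bS(\tilde\xi)L(\nu^*)u^*,u^*)+o(z)$ as real $z\to 0$; reality for all small real $z$ forces $(\bS(\tilde\xi)L(\nu^*)u^*,u^*)\in\RR$, so by the real-part hypothesis it must actually vanish. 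Second, the self-adjointness of $\bS(a,\tilde\xi)L(a,\tilde\xi)$ provides the orthogonality $\range(\bS L(\tilde\xi))\perp \ker L(\tilde\xi)$; combined with the semi-simple decomposition above and an explicit choice of test vector dual to $u^*$ in that decomposition, this produces $v$ with $L(a,\tilde\xi)v=L(a,\nu^*)u^*$, i.e.\ $u^*\in\range A_{\nu^*}$, the required contradiction.

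Uniformity of the bound on $\widetilde\cA$ follows from a compactness argument: $(a,\nu,\tilde\xi,u)\mapsto q_\nu(u)$ is jointly continuous, and the uniform bounds on the spectral projectors of $A_{\nu}$ granted by Proposition~\ref{propPSP} keep the constraint $u\in\ker L(a,\tilde\xi)$, $|u|=1$, stable under limits on the relevant compact parameter set, so that the infimum of $q_\nu(u)$ is attained and is strictly positive by the pointwise statement just established. I expect the main difficulty to be the second half of the closedness argument, namely turning the scalar equality $(\bS L(\nu^*)u^*,u^*)=0$ into the algebraic statement $L(a,\nu^*)u^*\in\range L(a,\tilde\xi)$: the only available orthogonality comes from $\bS L(\tilde\xi)$ being self-adjoint, and it must be leveraged against the \emph{non-orthogonal} semi-simple decomposition through a judicious choice of test vector built from the spectral projector of $A_{\nu^*}$.
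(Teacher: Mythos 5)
Your overall plan is close in spirit to the paper's: the paper also reduces to showing that the Hermitian form $u\mapsto(\bS(a,\tilde\xi)L(a,\nu)u,u)$ on $\ker L(a,\tilde\xi)$ stays definite as $\nu$ moves inside the cone, with the obstruction ruled out by semi-simplicity of the zero eigenvalue (the paper packages this as Proposition~\ref{positivcone}). You have also correctly identified where the real work lies. But precisely at that point the proposal has a gap. What you call ``the only available orthogonality,'' namely $\range\big(\bS L(\tilde\xi)\big)\perp\ker L(\tilde\xi)$, is the \emph{easy} inclusion that follows from $\bS L(\tilde\xi)$ being Hermitian. From $\bS L(a,\nu^*)u^*\perp\ker L(a,\tilde\xi)$ this inclusion alone does \emph{not} yield $L(a,\nu^*)u^*\in\range L(a,\tilde\xi)$: you need the reverse implication, $\{f:\bS f\perp\ker L\}\subset\range L$, which is exactly statement \eqref{ortho} in Proposition~\ref{propPSP}. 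That reverse implication is not a formal consequence of the symmetry; it is obtained by a dimension count that uses the positivity of $\bS L(\nu_a)$ on $\ker L(\tilde\xi)$ (one shows $L(\nu_a)\ker L\cap\{f:\bS f\perp\ker L\}=\{0\}$ and then matches dimensions). No ``judicious choice of test vector'' built only from the spectral projector of $A_{\nu^*}$ will substitute for this, because $\bS$ need not be injective and the inclusion $\bS\range L\subset(\ker L)^\perp$ may a priori be strict. So you must invoke Proposition~\ref{propPSP} twice: once in direction $\nu_a$ to get \eqref{ortho}, and once at $\nu^*$ (after passing through Proposition~\ref{propinvnu} to get strong hyperbolicity there) for the semi-simple splitting.

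Two further remarks. First, your ``upgrade to Hermitian vanishing'' only produces reality of the scalar $(\bS L(\nu^*)u^*,u^*)$; what you actually need for the Cauchy--Schwarz step is the full Hermitian symmetry of $\bS(\tilde\xi)L(\tilde\eta)$ restricted to $\ker L(\tilde\xi)$, which the paper obtains by polarizing the identity $\big(\bS(\tilde\xi+s\tilde\eta)L(\tilde\xi+s\tilde\eta)u,v\big)=\big(u,\bS(\tilde\xi+s\tilde\eta)L(\tilde\xi+s\tilde\eta)v\big)$ for $u,v\in\ker L(\tilde\xi)$, dividing by $s$, and letting $s\to0$; your argument should be carried out for the bilinear form, not just the diagonal. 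Second, the compactness argument for uniformity does not go through as stated: $\cA$ is an arbitrary (possibly unbounded) parameter set, so there is no compact set on which to take infima, and you cannot conclude the infimum is attained. The paper sidesteps this entirely by being quantitative: using Lemma~\ref{lemopencone} it pushes $\nu$ slightly further along the ray to a $\nu'\in\Gamma_a$ with uniformly controlled $|\det L(a,\nu')|$, writes $\nu=t\nu_a+(1-t)\nu'$ with $t\ge t_0>0$ uniform over $\widetilde\cA$, and Proposition~\ref{positivcone} then yields the explicit lower bound $t_0\,c$ for the positivity constant at $\nu$, independent of $a$. You should replace the compactness step by this quantitative one.
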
 

\begin{proof}
The symmetry of  $\bS(a, \tilde \xi + s \eta) L(a, \tilde \xi + s\tilde \eta)$ applied 
to $u$ and $v$ in $\ker L(a, \tilde \xi)$ implies that 
\begin{equation*}
\big( \bS(a, \tilde \xi + s \eta) L(a,\tilde \eta) u, v \big) 
= \big(u,  \bS(a, \tilde \xi + s \eta) L(a,\tilde \eta) v  \big) .
\end{equation*}
Letting $s$ tend to $0$, shows that for all $\eta$, 
the matrices  $\bS(a, \tilde \xi ) L(a,\tilde \eta) $ are symmetric on $\ker L(a, \tilde \xi)$. 

By Lemma~\ref{lemopencone}, there is $\eps$ such that for all $(a, \nu) \in \cP$, 
the ball centered at $\nu$ and radius $\eps$ is contained in $\Gamma_a(\nu_a)$. 
Therefore, there is $t_0 \in ]0, 1[$ such that for all $(a, \nu)\in \cP$, there is 
$\nu' \in \Gamma_a(\nu_a)$ on the line joining $\nu_a$ and $\nu$ such that 
$\nu = t \nu_a + (1-t) \nu'$ with $t \in [t_0, 1[$. 
By Proposition~\ref{propinvnu}, $L(a, \dot) $ is strongly hyperbolic
in the direction $\nu'$, implying that $L(a, \nu')^{-1} L(a, \tilde \xi)$ has 
only real an semi-simple eigenvalues. 
Therefore, the result follows from Proposition~\ref{positivcone} applied to
$J_t =  (1-t) L(a, \nu_a) + t  L(a, \nu) $
\end{proof}

We are now ready that the existence of a regular full symmetrizer
implies the existence of  symmetrizer, having the same smoothness, 
in all directions $\nu\in \Gamma (\nu_a)$. In particular, this finishes the proof of 
 Theorem~\ref{theomaj}. 
 Consider a strongly hyperbolic family $\{L(a, \, \cdot\,), \nu_a, a \in \cA\} $.  Assume that the condition \eqref{unifdecomp} holds. 
For  $c > 0$  and $C >0$ and $\cO$ an open neighborhood of $K$ such that $\overline \cO \cap \EE = \emptyset$
let 
\begin{equation}
\label{tildecA0}
\widetilde \cA_0 = \big\{ (a, \nu) ;   a \in \cA,  \nu \in \Gamma_a \cap \cO,  | \nu | \le C   ,  | \det L(a, \nu ) |  \ge c
\big\}. 
\end{equation}
 
 \begin{theo}
\label{theomajext}
 
 Suppose that the coefficients of $L(a, \cdot)$ are continuous, [resp. $W^{1, \infty}$]
[resp. $C^\infty$] on $\cA$ and that the mapping $a \mapsto \nu_a$ is 
continuous, [resp. $W^{1, \infty}$]
[resp. $C^\infty$].  Suppose that $S(a, \xi)$ is a uniform bounded family of symmetrizers  for $\{L(a, \, \cdot\,)$  
in the directions $\nu_a$ for $a\in \cA$, which is  continuous, [resp. $W^{1, \infty}$]
[resp. $C^\infty$] on $\cA \times \EE$. Then, there exist   a continuous, [resp. $W^{1, \infty}$]
[resp. $C^\infty$]  uniform family of symmetrizers   $S (a, \nu, \xi) $  for $L(a, \cdot)$ in the direction 
$\nu$, which is continuous, [resp. $W^{1, \infty}$]
[resp. $C^\infty$] on $\widetilde \cA_0 \times \EE$.

 \end{theo}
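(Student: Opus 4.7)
The plan is to establish the theorem by routing through a full symmetrizer, using Remark~\ref{remdefsym} and Proposition~\ref{propextpos} as the main tools already in place.

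\emph{Step 1: Full symmetrizer of the prescribed regularity.} From the given $S(a, \xi)$ on $\cA \times \EE$, apply Remark~\ref{remdefsym} parametrically: set $\bS(a, \xi + \tau \nu_a) := S(a, \xi)$ for $\xi \in \EE$ and $\tau \in \RR$, and smoothly modify $\bS$ in a conical neighborhood of $\RR_+ \nu_a$ (where $L(a, \cdot)$ stays invertible) so that $\bS(a, \nu_a) L(a, \nu_a)$ is uniformly definite positive. The uniform splitting~\eqref{unifdecomp} and the regularity of $a \mapsto \nu_a$ ensure that every ingredient of the construction (the projection onto $\EE$ along $\nu_a$, the cutoff) can be chosen with the same regularity as $S$. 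Hence $\bS(a, \tilde\xi)$ is a homogeneous-degree-zero full symmetrizer for $L(a, \cdot)$ on $\cA \times (\RR^{1+d} \setminus \{0\})$, of class continuous / $W^{1,\infty}$ / $C^\infty$, and uniformly positive in direction $\nu_a$. Then Proposition~\ref{propextpos} upgrades the positivity automatically to every direction $\nu$ with $(a, \nu) \in \widetilde\cA_0$, at no cost in regularity.

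\emph{Step 2: Reconstruction in direction $\nu$.} Set $A_\nu(a, \xi) := L(a, \nu)^{-1} L(a, \xi)$ for $\xi \in \EE$. By the argument of Theorem~\ref{theostronghyp}(ii) applied with $\bS$ positive in direction $\nu$ (Step~1), the matrix $A_\nu$ has only real, semi-simple eigenvalues, with spectral projectors uniformly bounded on $\widetilde\cA_0 \times \EE$. The goal is to build a Hermitian, uniformly positive definite matrix $S_{A_\nu}(a, \nu, \xi)$ with $S_{A_\nu} A_\nu$ Hermitian, of the same regularity as $\bS$; then
\begin{equation*}
S(a, \nu, \xi) \;:=\; S_{A_\nu}(a, \nu, \xi) \, L(a, \nu)^{-1}
\end{equation*}
satisfies $S(a, \nu, \xi) L(a, \xi) = S_{A_\nu} A_\nu$ Hermitian and $S(a, \nu, \xi) L(a, \nu) = S_{A_\nu}$ Hermitian and uniformly positive definite, so it is a symmetrizer for $L(a, \cdot)$ in direction $\nu$ with the required regularity on $\widetilde\cA_0 \times \EE$ (the factor $L(a, \nu)^{-1}$ is smooth and uniformly bounded thanks to the lower bound $|\det L(a, \nu)| \geq c$ in the definition of $\widetilde\cA_0$).

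\emph{Main obstacle.} The delicate point is constructing $S_{A_\nu}$ by a formula which is manifestly continuous / $W^{1,\infty}$ / $C^\infty$ in $(a, \nu, \xi)$. A construction based on individual spectral projectors of $A_\nu$ is not available, since those projectors are singular at eigenvalue crossings, which cannot be ruled out globally. One must instead build $S_{A_\nu}$ directly out of $\bS$, $L(a, \cdot)$, and $L(a, \nu)^{-1}$, exploiting that $\bS(a, \tilde\xi) L(a, \tilde\xi)$ is Hermitian for every $\tilde\xi$ and that $\bS$ is uniformly positive in direction $\nu$ on $\ker L(a, \tilde\xi)$. A natural candidate is a symmetrized conjugation such as $\tfrac12 \bigl( L(a, \nu)^* \bS(a, \xi) + \bS(a, \xi)^* L(a, \nu) \bigr)$, corrected in the degenerate locus $\{\det L(a, \xi) = 0\}$ via a smooth cutoff so that positivity passes from $\ker L(a, \xi)$ (given by Step~1) to all of $\CC^N$ (using that $L(a, \nu)^{-1} L(a, \xi)$ has only real semi-simple spectrum with bounded projectors). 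The Hermiticity of $S_{A_\nu} A_\nu$ is then a direct consequence of the full-symmetrizer identity, and the regularity is inherited from that of its data-level ingredients. Once $S_{A_\nu}$ is in hand, the conclusion of Theorem~\ref{theomajext} follows.
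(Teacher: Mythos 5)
Your Step 1 tracks the paper exactly: pass from $S$ to a positive full symmetrizer $\bS$ via Remark~\ref{remdefsym}, then use Proposition~\ref{propextpos} to extend positivity from $\nu_a$ to all $\nu$ with $(a,\nu)\in\widetilde\cA_0$. Your Step~2 also sets up the right reduction: reparametrize in the direction $\nu$ by $\tilde L(\tilde a,\tau,\xi)=L(a,\xi+\tau\nu)$, $\tilde\bS(\tilde a,\tau,\xi)=\bS(a,\xi+\tau\nu)$, reduce to constructing a positive Hermitian $S_{A_\nu}$ symmetrizing $A_\nu=L(a,\nu)^{-1}L(a,\xi)$.

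The gap is in what you call the ``Main obstacle.'' You dismiss the formula
\begin{equation*}
S_{A_\nu}(a,\nu,\xi) \;=\; \sum_{\tau\in\Sigma}\; \Pi(\tau,a,\nu,\xi)^* \,\tilde\bS(\tau,a,\nu,\xi)\, \Pi(\tau,a,\nu,\xi)
\end{equation*}
on the grounds that the individual spectral projectors $\Pi(\tau,\cdot)$ are singular at eigenvalue crossings. But this is precisely the formula the paper uses (Theorem~\ref{theomajmod}), and the whole point of Appendix~A (Theorem~\ref{theoFC}, together with Propositions~\ref{proplipvp}, \ref{proplipproj}, and Lemma~\ref{mainlem}) is that while the individual $\Pi(\tau,\cdot)$ blow up at crossings, the specific combination $\sum_\tau S(\tau,\cdot)\Pi_\tau$ with a continuous/$W^{1,\infty}$/$C^\infty$ weight $S(\tau,\cdot)$ inherits exactly that regularity. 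This is the key nontrivial ingredient of the theorem, and without it your reduction in Step~2 stalls. Your suggested replacement --- a symmetrized conjugation $\tfrac12(L(\nu)^*\bS+\bS^*L(\nu))$ plus an unspecified cutoff --- is not an argument: this quantity is only controlled on $\ker L(\xi)$, you have not shown how a cutoff turns it into something definite positive on $\CC^N$ while keeping $S_{A_\nu}A_\nu$ Hermitian, and none of those properties is automatic. In short, you identified where the difficulty lies but did not supply the functional-calculus lemma (or anything substituting for it) that resolves it.
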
 
  
  \begin{proof}
  For $\tilde a = (a, \nu) \in \widetilde \cA_0$, consider 
 \begin{equation*}
\tilde L(\tilde a, \tau, \xi) = \tau L(a, \nu) + L(a, \xi ) = L (a, \xi + \tau \nu). 
\end{equation*}
Then $\tilde \bS (\tilde a, \tau, \xi) = \bS(a, \xi + \tau \nu)$  symmetrizes $\tilde L(\tilde a, \tau, \xi)$. 
By \eqref{unifdecomp2}, $\tilde L$ and $\tilde \bS$ are 
continuous, [resp. $W^{1, \infty}$]
[resp. $C^\infty$] functions on $\widetilde \cA_0 \times \RR \times S^{d-1} $. 
The positivity condition 
\begin{equation}
\re \big( \tilde \bS (\tilde a, \tau, \xi) L(a, \nu) u, u \big) \ge c | u|^2
\end{equation} 
on $\ker \tilde L(\tilde a, \tau, \xi) = \ker L (a, \xi + \tau \nu)$ follows from Proposition~\ref{propextpos}
and and the construction of a symmetrizer 
$S(a, \nu, \xi)$, with the same smoothness as $\tilde \bS$,   is given by Theorem~\ref{theomajmod}. 
  \end{proof}


 
\section{Appendix A : Strongly hyperbolic matrices} 
 \label{SecStrHypMat}
 
We collect here the various technical results on matrices which have been used 
in the previous section. Changing slightly the notations, for instance  including $\xi$ 
or  $\nu$ among the parameters, we consider a family of $N \times N$ matrices, $A(a)$ depending on parameters $ a \in \Omega$ 
 where $\Omega$ is an open subset of $\RR^n$. We denote by $\Sigma(a)$ the spectrum of 
 $A(a)$. 
 
 \subsection{Definition and properties} 
 
 \begin{prop}
 \label{propstronghyp}
 The following properties are equivalent 
 
 i)  There is a real  $C_1$ such that
  \begin{equation}
  \label{eqC1}
 \forall t \in \RR,  \forall a \in \Omega \ : \qquad  
 \big| e^{ i t A(a)  } \big| \le C_1 . 
 \end{equation}

 ii) All the the eigenvalues $\lambda$ of  $A(a)$  are real  and semi-simple and 
  there is a real $C_2$ such that all the 
 eigen-projectors $\Pi_\lambda (a)$  satisfy 
  \begin{equation}
  \label{eqC3}
  \forall   a \in \Omega \ : \qquad  
 \big| \Pi_\lambda (a)  \big| \le C_2 . 
 \end{equation}     
 
 iii) $A(a)- \lambda \Id $ is invertible when $\im \lambda \ne 0$ and 
 there is a real $C_3$ such that
  \begin{equation}
  \label{eqC2}
 \forall \lambda \notin \RR \   \forall  a \in \Omega \ : \qquad  
 \big|  \big(  A(a)- \lambda  \Id \big)^{-1} \big| \le C_3  \big| \im \lambda |^{-1}. 
 \end{equation} 
 
  iv) There are definite positive 
  matrices $S(a)$ and there are constants  
 $C_4$ and  $c_4 > 0$ such that for all $a \in \Omega$,  $S(a) A(a) $  is   symmetric, 
   and   
 \begin{equation}
 \big| S( a) \big| \le C_4 , \qquad S( a)   \ge c_4 \Id. 
 \end{equation}  
 
 v)  There is a real $C_5$  such that for all matrix $B$, all $a\in \Omega$ and all 
 $\rho  \in \RR$, the eigenvalues   of $\rho A(a) + B$ are located in 
 $\{ | \im \lambda | < C_5 | B | \}$. 
     
 \end{prop}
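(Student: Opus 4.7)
The plan is to establish the cycle $(i) \Rightarrow (iii) \Rightarrow (ii) \Rightarrow (iv) \Rightarrow (i)$ and then close $(v)$ via $(iv) \Rightarrow (v) \Rightarrow (iii)$. The only substantive issue throughout is to carry out each step with constants uniform in $a \in \Omega$.

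For $(iv) \Rightarrow (i)$, the symmetrizer makes $A(a)$ self-adjoint for the inner product $\langle u,v\rangle_{S(a)} := (S(a)u,v)$, so $e^{itA(a)}$ is $S(a)$-isometric, and the sandwich $c_4 \Id \le S(a) \le C_4 \Id$ converts this into the uniform Euclidean bound $|e^{itA(a)}| \le \sqrt{C_4/c_4}$. For $(i) \Rightarrow (iii)$ I would use the Laplace identity
\[
(\lambda\Id - A(a))^{-1} = -i \int_0^{\infty} e^{it(\lambda\Id - A(a))}\,dt, \qquad \im \lambda > 0,
\]
bound the integrand by $C_1 e^{-t\im \lambda}$, and integrate; the lower half-plane is symmetric. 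For $(iii) \Rightarrow (ii)$: non-real eigenvalues would produce resolvent singularities off $\RR$, which is forbidden; a Jordan block of size $k \ge 2$ at a real $\lambda_0$ would force the resolvent to grow like $\varepsilon^{-k}$ along $z = \lambda_0 + i\varepsilon$, incompatible with the $C_3/\varepsilon$ bound; and the projector bound $|\Pi_\lambda(a)| \le C_3$ then follows from the limit formula
\[
\Pi_\lambda(a) = \lim_{\varepsilon \to 0^+} (-i\varepsilon)\bigl(A(a) - (\lambda + i\varepsilon)\Id\bigr)^{-1}.
\]
Finally, for $(ii) \Rightarrow (iv)$ I would take the canonical symmetrizer $S(a) := \sum_\lambda \Pi_\lambda(a)^* \Pi_\lambda(a)$: the product $S(a)A(a) = \sum_\lambda \lambda\,\Pi_\lambda^*\Pi_\lambda$ is Hermitian by reality of the spectrum, the upper bound $|S(a)| \le N C_2^2$ is trivial, and expanding $u = \sum_\lambda \Pi_\lambda(a)u$ and applying Cauchy--Schwarz gives $|u|^2 \le N(S(a)u,u)$, hence $S(a) \ge (1/N)\Id$.

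The pair involving $(v)$ is more elementary. For $(iv) \Rightarrow (v)$: if $(\rho A(a) + B)u = \mu u$ with $|u| = 1$ and $\rho \in \RR$, then $\rho(SAu,u)$ is real, so $|\im \mu|\,c_4 \le |\im(SBu,u)| \le C_4 |B|$. Conversely, for $(v) \Rightarrow (iii)$, given $\lambda \notin \RR$ and a unit vector $u$, the rank-one perturbation $B := -(A(a) - \lambda\Id)u \otimes u^*$ makes $\lambda$ an eigenvalue of $A(a) + B$ with $|B| = |(A(a) - \lambda\Id)u|$; applying $(v)$ with $\rho = 1$ and taking the infimum over unit $u$ yields $|(A(a) - \lambda\Id)^{-1}| \le C_5/|\im \lambda|$.

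I expect $(iii) \Rightarrow (ii)$ to be the main obstacle. From a single resolvent estimate one must simultaneously extract reality of the spectrum, semisimplicity at every real eigenvalue, and a projector bound that is uniform in both $a$ and $\lambda$. The limit formula for $\Pi_\lambda(a)$ delivers the projector bound cleanly \emph{once} semisimplicity is in hand, so the argument must be organized so that Jordan blocks are excluded first, via the local pole-order analysis of the resolvent along $z = \lambda_0 + i\varepsilon$, before writing down any spectral resolution of $A(a)$.
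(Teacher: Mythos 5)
Your proof is correct, and it follows a genuinely different route through the five conditions than the paper does. The paper establishes $i)\Leftrightarrow ii)$ first (using the time-averaging formula $\Pi_j=\lim_{T\to\infty}\frac{1}{2T}\int_{-T}^{T}e^{it(A-\lambda_j\Id)}\,dt$ to extract the projectors from the bounded semigroup), then closes $ii)\Rightarrow iv)\Rightarrow iii)\Rightarrow ii)$ (with $iv)\Rightarrow iii)$ done by a short Cauchy--Schwarz estimate), and handles $iii)\Leftrightarrow v)$ via a separate abstract lemma (Lemma~\ref{leminv}) relating the norm of an inverse to invertibility under small perturbations. You instead run $i)\Rightarrow iii)$ directly by the Laplace-transform identity, $iv)\Rightarrow i)$ directly by observing that $A$ is self-adjoint in the $S$-inner product so that $e^{itA}$ is an $S$-isometry, and $iv)\Rightarrow v)\Rightarrow iii)$ by an explicit imaginary-part estimate and a rank-one perturbation. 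The pieces you share with the paper are $ii)\Rightarrow iv)$ (same $S=\sum\Pi_\lambda^*\Pi_\lambda$) and $iii)\Rightarrow ii)$ (same pole-order exclusion of Jordan blocks plus the limit formula for $\Pi_\lambda$). Your $(iv)\Rightarrow(i)$ is conceptually cleaner than anything in the paper (it makes the mechanism --- uniform equivalence of norms under which the flow is unitary --- completely transparent), and your rank-one construction for $(v)\Rightarrow(iii)$ is exactly what is hidden inside the paper's Lemma~\ref{leminv}, only written out for this specific purpose; the paper's version buys reusability (the same lemma is invoked again in the proof of Proposition~\ref{invhyp1}), while yours is self-contained. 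Your diagnosis that $(iii)\Rightarrow(ii)$ is the delicate step --- reality and semisimplicity must be extracted before any spectral resolution is written down --- matches the paper exactly.
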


 \begin{proof}

 {\bf a) }  $ii)$ implies that  $A(a)$ has the spectral decomposition 
 $A = \sum \lambda_j \Pi_j $ with  real $\lambda_j $'s. 
 Thus \eqref{eqC3} implies that
 $\big| e^{ i t A) } \big| =  \big| \sum e^{ it  \lambda_j } \Pi_j\big| \le N C_2$.  
 
 Conversely,  $i)$ implies that the eigenvalues $\lambda_j$ of $A(a)$ are real, and semi-simple
 and thus  that $A(a) = \sum \lambda_j \Pi_j $. 
 Moreover,  
$$
 \lim_{ T \to \infty} \frac{1}{2T} \int_{-T}^T  e^{ i t (A(a) - \lambda_j \Id)}  dt  = 
 \sum_k   \lim_{ T \to \infty} \frac{1}{2T} \int_{-T}^T  e^{ i t (\lambda_k - \lambda_j \Id)} \Pi_k  dt   = \Pi_j . 
$$ 
Thus,  $| \Pi_j | \le C_1 $ if \eqref{eqC1}  is true.

\medbreak
{\bf b) }  Suppose that $ii)$ is satisfied so that $A = \sum \lambda_j \Pi_j$ and 
$\Id = \sum \Pi_j$. Then 
 \begin{equation}
 S  (a)  = \sum \Pi_j^* \Pi_j 
 \end{equation}
is definite positive, satisfies  $S  \ge N^{-1}  \Id$, $| S | \le N C^2_2$, and 
$S  A = \sum  \lambda_j \Pi_j^* \Pi_j  $ is self adjoint. 
 \medbreak

 If $ iv)$ holds then, with $\epsilon = \mathrm{sign}  (\gamma)$ , 
 $$
 \begin{aligned}
 c_4  |  \gamma |   \big| u\big|^2 \le \re \epsilon  \big( S  (- i  A + \gamma   \Id)   u, u \big) 
   \le  C_4 \big| (A + i \gamma)   u \big|  \ \big|  u \big|
 \end{aligned}
 $$
 implying $iii)$ with $C_3 = C_4/ c_4$.

 \medbreak
 If $iii)$  is satisfied,  then  the eigenvalues of $A(a)$ are real,  and semi-simple, for if there were a nondiagonal   block in the Jordan's decomposition of $A - \lambda_j \Id $ , the norm  of   
 $ (A -  (\lambda_j - i \gamma) \Id)^{-1}  $ would be at least of order $\gamma^{-2}$
when $\gamma \to 0$.  Thus $A = \sum \lambda_j \Pi_j$  and 
 $$
 \lim_{   \gamma  \to 0}  i \gamma  \big(A - ( \lambda_j - i \gamma) \Id)^{-1}  = 
 \sum_k  \lim_{  \gamma  \to 0} \frac{ i \gamma  }{(\lambda_k - \lambda_j + i  \gamma } \Pi_k = \Pi_j, 
$$
hence    $| \Pi_j | \le  \ C_3  $.

\medbreak
{\bf c) }  By homogeneity,   $iii)$  is equivalent to the condition 
 \begin{equation*}
 \forall  a \in \Omega, \forall \rho \in \RR,  \   \qquad  \big| \im \lambda | \ge C_3 \quad \Rightarrow \quad
 \big|  \big( \rho  A(a)- \lambda  \Id \big)^{-1} \big| \le   1 .       
 \end{equation*} 
By Lemma~\ref{leminv} below, this is equivalent to the condition 
that for all matrix $B$ such that $| B| < 1$, $\rho A - \lambda \Id + B$ is invertible f  
when $| \im \lambda | \ge C_3$, meaning that the spectrum of 
$\rho A + B$ is contained in $\{ | \im \lambda | < C_3$. 
By homogeneity, this is equivalent to $v)$ with $C_5 = C_3$. 

The proof of the proposition is now complete. 
   \end{proof}
  
  \begin{lem}
  \label{leminv}
  The matrix $A$ is invertible with $| A^{-1}| \le \kappa $ if and only if 
  $A+B$ is invertible for all $B$ such that $|B | < \kappa ^{-1} $. 
  \end{lem}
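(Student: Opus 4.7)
The plan is to prove both implications using the standard Neumann series on one side and an explicit rank-one perturbation on the other.

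For the forward direction, assume $A$ is invertible with $\|A^{-1}\| \le \kappa$. I would write $A + B = A(\Id + A^{-1}B)$ and observe that if $\|B\| < \kappa^{-1}$, then $\|A^{-1}B\| \le \|A^{-1}\|\|B\| < 1$, so $\Id + A^{-1}B$ is invertible by the Neumann series (with inverse given by $\sum_{k\ge 0}(-A^{-1}B)^k$). Consequently $A+B$ is invertible. This is the routine half.

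For the converse, I argue by contrapositive: assume the conclusion $\|A^{-1}\| \le \kappa$ fails. If $A$ itself is not invertible, take $B = 0$, which has $\|B\| = 0 < \kappa^{-1}$, and we are done. Otherwise $A$ is invertible but $\|A^{-1}\| > \kappa$, so there exists a unit vector $v \in \CC^N$ with $|u| > \kappa$, where $u := A^{-1}v$. The key construction is a rank-one perturbation: define $B \in \mathrm{End}(\CC^N)$ by
\begin{equation*}
B w = - \frac{(u, w)}{|u|^2} v,
\end{equation*}
so that $Bu = -v$ and $\|B\| = |v|/|u| = 1/|u| < \kappa^{-1}$. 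Then $(A+B)u = Au + Bu = v - v = 0$ with $u \ne 0$, so $A+B$ is not invertible, contradicting the hypothesis.

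This is a short argument with no real obstacle; the only point to watch is the strictness of the inequality $\|B\| < \kappa^{-1}$, which is guaranteed by the strict inequality $|u| > \kappa$ coming from the assumption $\|A^{-1}\| > \kappa$ (the supremum in the operator norm may not be attained, but one can always choose $v$ so that $|A^{-1}v|/|v|$ is strictly greater than $\kappa$ whenever $\|A^{-1}\| > \kappa$).
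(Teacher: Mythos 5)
Your proof is correct and takes essentially the same approach as the paper's: the Neumann series for the forward direction and a rank-one perturbation killing an invertibility-witnessing vector for the converse. The only cosmetic differences are that you split the converse into two cases (the paper handles $A$ singular and $|A^{-1}|>\kappa$ uniformly by choosing a unit vector $\underline u$ with $|A\underline u|<\kappa^{-1}$) and that you realize the linear functional via the inner product with $u$, whereas the paper uses an abstract $\ell$ with $\ell(\underline u)=1$, $|\ell|=1$.
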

  
  \begin{proof}
  If $| A^{-1}| \le \kappa $, then $ A + B = A^{-1} (\Id + A^{-1} B)$ is invertible 
  for all $B$ such that  $| A^{-1} B| \le \kappa | B| < 1$. 
  
  Conversely, if $A$ is not invertible or if $| A^{-1}| > \kappa $, there is $\underline u$ such that 
  $| \underline u|  = 1$ and $| A \underline u | < \kappa^{-1} $. Pick a linear form 
  $\ell$ such that $\ell (\underline u ) = 1$ and $| \ell| = 1$. 
  Then the matrix $B$ defined by $B u = \ell(u) A \underline u$ 
  satisfies $| B | = | A \underline u| < \kappa^{-1} $ but $A-B$ is not invertible since 
  $\underline u$ is in its kernel. 
  \end{proof}


\subsection{Lipschitz dependence of the eigenvalues }

     \begin{ass}
     \label{assSHM}
   The  family  $\{ A(a), a \in \Omega\}$ of $N \times N$ matrices,
 is uniformly strongly hyperbolic in the sense that the equivalent properties of Proposition 
 $\ref{propstronghyp}$ are satisfied. 
   \end{ass} 
 
\begin{prop}
\label{proplipvp}  Suppose that $A(\cdot) \in W^{1, \infty} (\Omega)$ satisfies Assumption~$\ref{assSHM}$. 
Denote by $\lambda_j(a)$, $1 \le j \le N$,  the eigenvalues of $A(a)$, labelled in the increasing order
and repeated accordingly to their multiplicity.
Then, the functions  $\lambda_j$ belong to $ W^{1, \infty} (\Omega)$
\end {prop}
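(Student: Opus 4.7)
The plan is to reduce, at each point $a_0 \in \Omega$, the family $A(a)$ to a small perturbation of a Hermitian matrix by means of the symmetrizer; apply the Bauer--Fike theorem to obtain a uniform Hausdorff bound between the unordered spectra; and then upgrade this to a pointwise Lipschitz bound on the sorted eigenvalues using the continuity of spectral projectors, a continuity which rests on the semisimplicity guaranteed by Proposition~\ref{propstronghyp}~(ii).

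Fix $a_0 \in \Omega$ and set $T_0 := S(a_0)^{1/2}$, where $S(a_0)$ is the symmetrizer of Proposition~\ref{propstronghyp}~(iv). The uniform bounds $c_4\Id \le S(a)$ and $|S(a)| \le C_4$ give $|T_0|\,|T_0^{-1}| \le \sqrt{C_4/c_4}$, and $H_0 := T_0 A(a_0) T_0^{-1}$ is Hermitian with the same spectrum as $A(a_0)$. For $a$ close to $a_0$ I would write
\[
T_0 A(a) T_0^{-1} = H_0 + E_a, \qquad |E_a| \le \sqrt{C_4/c_4}\,\Lambda\,|a - a_0|,
\]
where $\Lambda$ is the Lipschitz constant of $A$. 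Since $H_0$ is Hermitian, the Bauer--Fike theorem (whose condition number on a Hermitian matrix is one) yields that every eigenvalue of $A(a)$ lies within $\sqrt{C_4/c_4}\,\Lambda\,|a - a_0|$ of some eigenvalue of $A(a_0)$. Exchanging the roles of $a$ and $a_0$ and using $S(a)^{1/2}$ gives the reverse inclusion with the same constant, so the \emph{set-}Hausdorff distance between the two spectra is controlled by $|a - a_0|$.

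The main obstacle is to promote this Hausdorff bound to a pointwise estimate on the sorted eigenvalues, since set-Hausdorff distance in general ignores multiplicities. This is exactly where strong hyperbolicity enters. Let $\mu_1 < \cdots < \mu_m$ be the distinct eigenvalues of $A(a_0)$ with multiplicities $d_1, \dots, d_m$, and let $g > 0$ be half of $\min_j(\mu_{j+1} - \mu_j)$. For $|a-a_0|$ so small that the Bauer--Fike radius is smaller than $g$, pick disjoint disks $\cD_j \subset \CC$ of radius $g$ centred at $\mu_j$; their boundaries then avoid the spectra of both $A(a_0)$ and $A(a)$. The spectral projectors
\[
P_j(a) := \frac{1}{2\pi i} \oint_{\partial \cD_j} (z\Id - A(a))^{-1}\,dz
\]
depend continuously on $a$ and therefore have locally constant rank equal to $d_j$, so $A(a)$ has exactly $d_j$ eigenvalues in $\cD_j$, all real by hyperbolicity and hence lying in $[\mu_j - g, \mu_j + g]$. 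Since the disks are disjoint, sorting the spectra of $A(a_0)$ and $A(a)$ matches them group by group, which yields
\[
|\lambda_k(a) - \lambda_k(a_0)| \le \sqrt{C_4/c_4}\,\Lambda\,|a - a_0|
\]
for every $k$, with a constant independent of $a_0$. A standard covering argument along segments then extends this local Lipschitz estimate to the global statement $\lambda_k \in W^{1,\infty}(\Omega)$.
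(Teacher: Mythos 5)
Your proposal is correct, and it takes a genuinely different route from the paper's. The paper works from characterization~(iii) of Proposition~\ref{propstronghyp}: using the semi-simplicity and the uniform bound on the eigenprojectors, it writes the resolvent estimate $\big|(A(\underline a)- z\Id)^{-1}\big| \le C |z-\underline\lambda|^{-1}$ near each eigenvalue $\underline\lambda$, deduces that $A(a)-z\Id$ stays invertible whenever $CK|a-\underline a| < |z-\underline\lambda|$, and then applies Rouch\'e's theorem directly to a disk of radius $KC|a-\underline a|$ to localize the $m+1$ relevant eigenvalues of $A(a)$ there. In other words, the paper's contour argument simultaneously delivers the multiplicity matching and the $O(|a-\underline a|)$ rate in a single stroke. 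You instead work from characterization~(iv): conjugate by $S(a_0)^{1/2}$ to reduce to a Hermitian-plus-small-perturbation matrix, invoke Bauer--Fike (with condition number one) for the Hausdorff rate, and then run a separate contour/rank argument on fixed disks of radius $g$ to sort out multiplicities. Both approaches exploit equivalent facets of strong hyperbolicity, and both give constants independent of $a_0$. The paper's version is marginally tighter (one contour argument instead of Bauer--Fike plus a contour argument), while yours makes the role of the symmetrizer more visible and is perhaps easier to motivate. Two small remarks: your reverse Bauer--Fike inclusion, while harmless, is not actually needed once the group-by-group matching via spectral projectors is in place; and the ``standard covering argument along segments'' you gesture at is precisely what the paper isolates and proves as a separate lemma (Lemma~\ref{lemLip}), a step worth spelling out since $\Omega$ is merely an open set and the local Lipschitz estimate must be glued carefully to yield a genuine $W^{1,\infty}$ bound.
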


\begin{proof}
The continuity of the roots of a polynomial with respect to the coefficients is well known. 
The Lipschitz smoothness with respect to parameters of the roots of hyperbolic polynomials is true in general, provided that the coefficients are smooth enough (see \cite{Bron}). The proposition says that  in the case 
of the characteristic determinant of 
strongly hyperbolic systems, the Lipschitz smoothness of the coefficients is sufficient. 

 Fix $\underline a \in \Omega$ and an eigenvalue 
 $\underline \lambda= \lambda_p (\underline a) = \lambda_{p+ m} (\underline a) $ of $A(\underline a)$
 of multiplicity $m+1$. Let $\delta  >  0$ denote the distance of $\underline \lambda$ to the remainder part 
 of the spectrum of $A(\underline a)$. By   Assumption~\ref{assSHM}, there
 is $C$ which depends only on an upper bound of the norms of the spectral projectors, thus independent of 
 $\underline a$, such that 
\begin{equation*}
\forall z \in \CC  , \ 
| z - \underline \lambda | \le \delta/2 , \qquad  \big| \big(A(\underline a ) - z \Id \big)^{-1} \big| \le C | z - \underline \lambda | ^{-1} 
 \end{equation*}
  Therefore,  $A - z \Id$ is invertible when  $    | A - A(\underline a ) | <      | z - \underline \lambda |  \le \delta/2C$.

 Let $\Omega_1 \subset \Omega $ denote a convex open neighborhhood of $\underline a$. Because $A \in W^{1, \infty} (\Omega)$
for $a$ and $a' \in \Omega_1$ there holds 
 \begin{equation}
 \label{LipestA}
   \big| A(a) - A(a') \big| \le K | a - a '|
 \end{equation} 
 with $K = \big\| \nabla_a A \big\|_{L^\infty(\Omega)}$.  
 Therefore,   
 $ A(a) - z \Id$ is invertible if $a \in \Omega_1$ and  
 $ C K | a - \underline a  |  \le | z - \underline \lambda | \le \delta/2$. 
 By Rouché's theorem,  this implies that 
 $A(a)$ has $m$ eigenvalues (counted with their multiplicity) in the disk 
$\{ | \lambda - \underline \lambda | \le K C | a - \underline a|\}$. They must be real by assumption, and by continuity 
they are $\{ \lambda_p(a) , \ldots, \lambda_{p+m} (a) \}$. Hence, 
$ | \lambda_j(a) - \underline \lambda | \le  K C | a - \underline a|$ for $p\le j\le p+m$, 
 provided that $a \in \Omega_1$ and 
$  K C | a - \underline a | < \delta /2$. 

 Gluing these estimates together, we have proved that  there are constants $C$ and $K$  such that : 
for all  $\underline a \in \Omega$,  there is a convex neighborhood $\omega$  of $\underline a$, such that 
for $a \in \omega $ and all $j$,
$$
 | \lambda_j (a)  -  \lambda_j (\underline a)  | \le  C K | a - \underline a|. 
 $$
The following independent  implies that 
$ \big| \nabla_a \lambda_j  \big|_{L^\infty(\Omega)} \le CK$ and the proposition follows. 
\end{proof}

\begin{lem}
\label{lemLip}
Suppose that  $K $ is positive real number  and   $f$ is a function defined on the open set $\Omega \subset \RR^n$ such that 
for all  $  \underline a \in \Omega$,  there is a neighborhood $\omega$  of $ \underline a$, such that 
\begin{equation} 
\label{youpi} 
 \forall  a   \in \omega, \qquad  | f(a )  -  f( \underline a)  | \le K | a - \underline a|. 
\end{equation}
Then   $ \big\| \nabla_a f  \big\|_{L^\infty(\Omega)} \le K$. 
\end{lem}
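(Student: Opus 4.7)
The plan is to upgrade the pointwise local Lipschitz condition into a Lipschitz estimate along line segments contained in $\Omega$, and then to conclude the gradient bound by passing to directional difference quotients. The only nontrivial step is the first one, a continuation argument along a segment.

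Concretely, I would fix two points $a, b \in \Omega$ for which the segment $[a,b]$ lies in $\Omega$, parametrize it by $\gamma(t) = a + t(b-a)$, and consider
\begin{equation*}
T = \sup\Big\{ t \in [0,1] \, : \, |f(\gamma(s)) - f(a)| \le K s |b-a| \ \text{for all } s \in [0,t] \Big\}.
\end{equation*}
Since the hypothesis $(\ref{youpi})$ at each point implies continuity of $f$, the set defining $T$ is closed, so the estimate is attained at $T$. The hypothesis at $a$ yields $T > 0$. If $T < 1$, applying the hypothesis at $\gamma(T)$ furnishes $\delta > 0$ such that $|f(\gamma(T+s)) - f(\gamma(T))| \le K s |b-a|$ for $s \in [0,\delta]$; combining with the bound already attained at $T$ gives the defining inequality on $[0, T+\delta]$, contradicting the maximality of $T$. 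Hence $T=1$, which proves $|f(b) - f(a)| \le K |b-a|$.

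Having obtained a uniform Lipschitz constant $K$ along all segments contained in $\Omega$, the bound on the gradient is essentially routine. By Rademacher's theorem $f$ is differentiable at almost every point $a \in \Omega$, and at such a point, for any unit vector $v$, the segment $[a, a+hv]$ lies in $\Omega$ for small $h$, so the segment estimate yields $|f(a+hv) - f(a)| / |h| \le K$. Letting $h \to 0$ gives $|v \cdot \nabla f(a)| \le K$ for every unit $v$, hence $|\nabla f(a)| \le K$ almost everywhere, which is the claimed $L^\infty$ bound.

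The main obstacle is the continuation step. If one prefers to avoid invoking Rademacher, the same segment estimate controls $\nabla f$ in the distributional sense: testing against $\phi \in C^\infty_c(\Omega)$ and using a translation argument,
\begin{equation*}
- \int f(a) \, \partial_v \phi(a) \, da = \lim_{h \to 0^+} \int \frac{f(a) - f(a-hv)}{h} \, \phi(a) \, da,
\end{equation*}
and for $h$ small enough the difference quotient is bounded by $K$ uniformly on a fixed convex neighborhood of $\mathrm{supp}\,\phi$ (by the segment estimate), giving $|\langle \partial_v f, \phi \rangle | \le K \|\phi\|_{L^1}$ and hence the same conclusion.
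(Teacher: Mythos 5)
Your proof is correct and takes essentially the same approach as the paper: a continuation argument along segments to upgrade the pointwise local bound to a segment (hence local Lipschitz) estimate, followed by the standard passage from a local Lipschitz constant to an almost-everywhere gradient bound. The paper compresses the final step into a single assertion, whereas you spell out the Rademacher (or distributional) argument, but the substance is identical.
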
 
\begin{proof}
Note that \eqref{youpi} implies that $f$ is continuous at $\underline a$, thus $f$ is continuous. 

We first shown that for all convex open set 
$\Omega_1 \subset \Omega$ 
the inequality 
 \begin{equation}
\label{youpipi}
| f(b )  -  f(a)  | \le K   | b - a|. 
\end{equation} 
is satisfied  for all $a$ and $b$ in $\Omega_1$.
Indeedn let $\sT$ denote the set of real numbers 
$t \in [0, 1]$ such that 
\begin{equation} 
\label{youpi1} 
 \forall  s  \in [0, t] , \qquad  | f(a + s (b- a) )  -  f(a)  | \le K  s | b - a|. 
\end{equation}
By assumption, the property \eqref{youpipi} is satisfied on a 
neighborhood of $a$, implying  that $\sT$ is not empty. 
By definition $\sT$  is an interval, and by continuity of 
$f$ it is closed. Using the assumption \eqref{youpi}  near  $a +t (b-a)$, implies that 
$\sT$ is open so that $T = 1$ and \eqref{youpipi} is proved. 

This implies that $f$ is Lipschitz continuous on $\Omega_1$ and that
$ \big\| \nabla_a f \big\|_{L^\infty(\Omega_1)} \le K$. Since this is true for all ball $\Omega_1 \subset \Omega$, 
the lemma follows. 
\end{proof} 


\subsection{Lipschitz dependence of the  eigenprojectors}

\begin{prop}
\label{proplipproj}
Suppose that $A(\cdot) \in W^{1, \infty} (\Omega)$ satisfies Assumption~$\ref{assSHM}$. 
Let $\underline a \in \Omega$ and  consider 
$ \underline \Lambda := \{ \lambda_j (\underline a) , j \in J \}$ a subset of the spectrum 
of $A(\underline a)$. 
Let $ \underline \Lambda ' = \Sigma(\underline a) \backslash \underline \Lambda 
= \{ \lambda_j (\underline a) , j \in J' \}$  and define 
\begin{equation}
\delta = \mathrm{dist} (\underline \Lambda, \underline \Lambda') = 
\min_{ (j,j')  \in J,\times J'}  \big| \lambda_j (\underline a) - \lambda_{j'} (\underline a) \big| > 0 
\end{equation}
Let $\omega$ be a neighborhood of  $\underline a $ 
such that $|\lambda_j (a) - \lambda_j(\underline a) | \le \delta/4$ for all $j\in \{1, \ldots, N\}$ 
and $a \in \omega$. With 
$ \Lambda (a)  = \{ \lambda_j ( a) , j \in J \}$, 
consider the
  spectral projector 
\begin{equation}
\Pi_\Lambda (a) = \sum_ {\lambda \in \Lambda (a)}  \Pi_\lambda (a) . 
\end{equation} 
Then, 
$\Pi_\Lambda (a) $ is continuous on $\omega$ and 
\begin{equation}
\label{estlipproj} 
\big|   \Pi_\Lambda  (a )  - \Pi_\Lambda (  a') \big|  \le C K   \delta^{-1} \big| a -  a'\big|, 
\end{equation}
where    $C$   depends only on an upper bound of the norms of the spectral projectors of $A(\cdot)$ 
and $K = \big\| \nabla_a A \big\|_{L^\infty(\Omega)}$.  
\end{prop}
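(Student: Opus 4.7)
The natural approach is the Dunford--Riesz contour integral representation of the spectral projector. For each $a \in \omega$, pick once and for all a contour $\gamma$ (for instance a union of small circles of radius roughly $\delta/3$ centered at the points of $\underline \Lambda$) that encloses exactly $\Lambda(a)$ and avoids $\Lambda'(a)$; the hypothesis that on $\omega$ every eigenvalue has moved by less than $\delta/4$ is precisely what makes such a uniform choice possible. Then
\[
\Pi_\Lambda(a) = \frac{1}{2\pi i}\oint_\gamma (z\Id - A(a))^{-1}\,dz,\qquad a \in \omega.
\]

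The second ingredient is a uniform resolvent bound on $\gamma$. By Assumption~\ref{assSHM} combined with Proposition~\ref{propstronghyp}(ii), one has the spectral decomposition $A(a) = \sum_k \lambda_k(a)\Pi_k(a)$ with $|\Pi_k(a)| \le C_2$ uniformly in $a$, so
\[
\bigl|(z\Id - A(a))^{-1}\bigr| \le \frac{NC_2}{\mathrm{dist}(z,\Sigma(A(a)))} \le \frac{C'}{\delta}
\]
for all $z \in \gamma$ and $a \in \omega$, where $C'$ depends only on $C_2$ and $N$. Continuity of $\Pi_\Lambda$ on $\omega$ then follows from continuity of $A$ via dominated convergence inside the integral.

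For the Lipschitz estimate I would apply the second resolvent identity inside the integral,
\[
\Pi_\Lambda(a) - \Pi_\Lambda(a') = \frac{1}{2\pi i}\oint_\gamma (z\Id - A(a))^{-1}\bigl(A(a) - A(a')\bigr)(z\Id - A(a'))^{-1}\,dz,
\]
and bound the integrand using $|A(a) - A(a')| \le K|a-a'|$ together with the resolvent bound above. Since the length of $\gamma$ is proportional to $\delta$ (with a constant depending only on $N$), multiplying the three bounds yields
\[
|\Pi_\Lambda(a) - \Pi_\Lambda(a')| \le \frac{|\gamma|}{2\pi} \cdot \frac{C'^2}{\delta^2} \cdot K |a-a'| \le C K \delta^{-1} |a-a'|,
\]
which is exactly \eqref{estlipproj}.

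The only delicate point is making sure the resolvent bound is uniform in $a \in \omega$: this is precisely where Assumption~\ref{assSHM} is essential, since without the uniform bound on the projectors $\Pi_k(a)$ nearby Jordan-type degeneracies could blow up the resolvent on $\gamma$ as $a$ varies. Once the uniform spectral decomposition is in hand, the remaining steps --- choosing a contour, invoking the resolvent identity, and estimating its length --- are standard.
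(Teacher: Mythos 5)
Your proof is correct and follows essentially the same route as the paper: both represent $\Pi_\Lambda(a)$ as a Dunford contour integral over a fixed contour of total length $O(\delta)$ surrounding $\underline\Lambda$, use Assumption~\ref{assSHM} (uniformly bounded spectral projectors) to get a uniform resolvent bound $O(\delta^{-1})$ on the contour, and then apply the second resolvent identity and the Lipschitz bound $|A(a)-A(a')|\le K|a-a'|$ to obtain \eqref{estlipproj}. The only cosmetic difference is that the paper phrases the contour as a union of Jordan curves keeping distance $\delta/2$ from $\underline\Lambda'$, while you take small circles of radius $\approx\delta/3$ around the points of $\underline\Lambda$; the resulting estimates are identical.
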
 

\begin{proof} There are finitely many Jordan curves $\Gamma_k$ in the complex domain, of total length less than  $C \delta$, such that $|z - \lambda_j(\underline a) | \ge \delta /2$ for all 
$z \in \cup \Gamma_k $ and all $j \in J $,  surrounding 
$\Lambda$ so that   
$$
\Pi_\Lambda ( \underline  a) = \sum_k  \frac{1}{2i \pi} \int_{\Gamma_k}    (z \Id - A( \underline a) )^{-1} dz  
 $$
This formula extends to $a \in \omega$. 
Moreover,  using the estimate 
 $$
 \begin{aligned}
 \big|  (z \Id - A(   a) )^{-1}  -    (z \Id - A(  a') )^{-1} \big| \qquad \qquad &
 \\
\le   \big|  (z \Id - A(   a) )^{-1}  \big| \, \big| A(a)   - A(a') \big|  \, \big|   (z \Id & - A(  a') )^{-1} \big|  
\\
&   \le C K  \delta^{-2}  \big| a - a' \big|
 \end{aligned}
 $$
for $z \in \cup \Gamma_k$, implies \eqref{estlipproj}.
\end{proof}


\subsection{A piece of functional calculus}

We study the smoothness of  $f(A(a))$, given the  smoothness of $f$ and $A$.  
We extend the analysis to vector or matrix valued functions $S(\lambda, a)$ using the following definition
\begin{equation}
\label{functcalc}
S_A(a)  = \sum_{\lambda \in \Sigma(a)}  S(\lambda, a ) \Pi _\lambda (a ). 
\end{equation}

\begin{theo}
\label{theoFC}
Suppose that $A(\cdot)$ is continuous [resp. $W^{1, \infty}$] [resp. $C^\infty$]  on 
$\Omega$ and satisfies Assumption~$\ref{assSHM}$. Suppose that $S(\lambda, a)$ is  continuous [resp. $W^{1, \infty}$] [resp. $C^\infty$ ] 
on $\RR \times \Omega$. Then  
 $S_A(a)$ 
is continuous [resp. $W^{1, \infty}$] [resp. $C^\infty$ ] on $\Omega$. 
\end{theo}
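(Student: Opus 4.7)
The plan is to localize around any $\underline a\in\Omega$ and use the cluster decomposition of the spectrum of $A$. Let $\underline\lambda_1,\ldots,\underline\lambda_p$ be the distinct eigenvalues of $A(\underline a)$, let $\delta>0$ be smaller than half the minimal gap, and pick pairwise disjoint circles $\Gamma_k\subset\CC$ of radius $\delta/2$ around each $\underline\lambda_k$. By Assumption~\ref{assSHM}, the resolvent $(z\Id-A(a))^{-1}$ is uniformly bounded on $\bigcup_k\Gamma_k$ for $a$ in a small neighborhood $\omega$ of $\underline a$, so the cluster projector
\[
\Pi_{\Lambda_k}(a)=\frac{1}{2i\pi}\oint_{\Gamma_k}(z\Id-A(a))^{-1}\,dz=\sum_{\lambda\in\Lambda_k(a)}\Pi_\lambda(a)
\]
is well defined on $\omega$ and inherits the regularity of $A$: continuous from continuity of the resolvent, Lipschitz by Proposition~\ref{proplipproj}, and $C^\infty$ by iterated differentiation under the contour integral. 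Writing $S_A(a)=\sum_k T_k(a)$ with $T_k(a)=\sum_{\lambda\in\Lambda_k(a)}S(\lambda,a)\Pi_\lambda(a)$, it suffices to prove the stated regularity of each $T_k$ on $\omega$ and then glue the local estimates via Lemma~\ref{lemLip}.

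For each $k$ I would split
\[
T_k(a)=S(\underline\lambda_k,a)\,\Pi_{\Lambda_k}(a)+R_k(a),\qquad R_k(a)=\sum_{\lambda\in\Lambda_k(a)}\bigl[S(\lambda,a)-S(\underline\lambda_k,a)\bigr]\Pi_\lambda(a).
\]
The first summand has the claimed regularity since $a\mapsto S(\underline\lambda_k,a)$ inherits the regularity of $S$ and $\Pi_{\Lambda_k}$ has been treated. All the work is in $R_k$, whose summands involve the individual $\Pi_\lambda$ that may fail to be even continuous at eigenvalue crossings. The key algebraic tool, a direct consequence of the semi-simplicity of every eigenvalue (Assumption~\ref{assSHM}), is
\[
\sum_{\lambda\in\Lambda_k(a)}(\lambda-\underline\lambda_k)^n\Pi_\lambda(a)=(A(a)-\underline\lambda_k\Id)^n\,\Pi_{\Lambda_k}(a),\qquad n\in\NN,
\]
which eliminates the irregular individual projectors in favor of a product of regular objects.

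In the continuous case, the uniform bound on $\Pi_\lambda$, the continuity of the eigenvalues (Proposition~\ref{proplipvp} or the classical continuity of roots), and the local uniform continuity of $S$ give $|R_k(a)|\to 0$ as $a\to\underline a$, hence continuity of $R_k$ and of $T_k$ at $\underline a$. In the $C^\infty$ case, a Taylor expansion of $S$ in $\lambda$ around $\underline\lambda_k$ to an arbitrary order $N$, combined with the identity above, rewrites $R_k$ as
\[
\sum_{n=1}^{N}\tfrac{1}{n!}\partial_\lambda^n S(\underline\lambda_k,a)\,(A(a)-\underline\lambda_k\Id)^n\,\Pi_{\Lambda_k}(a)
\]
of $C^\infty$ functions plus an integral Taylor remainder $E_N$, which, after applying the same cluster identity to the residual factor $(\lambda-\underline\lambda_k)^{N+1}$, has $\ell$-th $a$-derivatives controlled by $|a-\underline a|^{N+1-\ell}$ (via Proposition~\ref{proplipvp}); choosing $N>\ell$ for each desired order delivers $C^\infty$ regularity of $R_k$.

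The main obstacle is the Lipschitz case, where only one Taylor coefficient of $S$ in $\lambda$ is available. My plan is to represent $R_k$ by a Helffer--Sj\"ostrand area integral
\[
R_k(a)=-\frac{1}{\pi}\int_{\CC}\bar\partial_z\widetilde S(z,a)\,\bigl[(z\Id-A(a))^{-1}\Pi_{\Lambda_k}(a)-\tfrac{\Pi_{\Lambda_k}(a)}{z-\underline\lambda_k}\bigr]\,dL(z),
\]
with $\widetilde S(x+iy,a)=[S(x,a)-S(\underline\lambda_k,a)]\chi(y)+iy\,\partial_\lambda S(x,a)\chi(y)$ cut off in $x$ near $\underline\lambda_k$: this extension is Lipschitz in $(z,a)$, compactly supported, and satisfies $\bar\partial_z\widetilde S=O(|\im z|)$ modulo a distributional contribution from $\partial_\lambda^2 S$ on $\RR$. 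The bracket vanishes as $z\to\underline\lambda_k$, killing the residual pole, and combined with the resolvent estimate $iii)$ of Proposition~\ref{propstronghyp} turns the integral into one over an absolutely integrable, Lipschitz-in-$a$ integrand. The distributional $\partial_\lambda^2 S$ contribution is handled by mollifying $S$ in $\lambda$: the mollified $S_\eps$ satisfies $\|\partial_\lambda S_\eps\|_\infty\le\|\partial_\lambda S\|_\infty$ and $\|\partial_a S_\eps\|_\infty\le\|\partial_a S\|_\infty$, the $C^\infty$-in-$\lambda$ analysis above applies to it with Lipschitz-in-$a$ bound uniform in $\eps$, and since $S_\eps\to S$ uniformly on compacts the uniform Lipschitz bound on $(S_\eps)_A$ carries over to $S_A$ in the limit. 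Gluing the local estimates by Lemma~\ref{lemLip} completes the proof.
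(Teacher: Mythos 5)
Your overall scaffold — localize at $\underline a$, use a cluster decomposition, split $S_A$ into a frozen leading part and a remainder, and glue with Lemma~\ref{lemLip} — matches the paper's, and your treatment of the continuity case is essentially the paper's argument. The $C^\infty$ and Lipschitz cases, however, take different routes from the paper and both have gaps.

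For the $C^\infty$ case, the paper replaces your Taylor-in-$\lambda$ device with an almost-holomorphic extension and a boundary-plus-area contour formula, and there is a reason: after writing $E_N(\lambda,a)=(\lambda-\underline\lambda_k)^{N+1}G_N(\lambda,a)$ with $G_N$ smooth, the cluster identity cannot be applied because $G_N$ still depends on $\lambda$, so $\sum_{\lambda\in\Lambda_k(a)} E_N(\lambda,a)\Pi_\lambda(a)$ retains the individual (non-smooth) $\Pi_\lambda$'s and does not reduce to a product of smooth matrices times $\Pi_{\Lambda_k}(a)$. Establishing that this remainder is itself differentiable, with derivatives vanishing to high order, is precisely what needs an argument, and is what the area-integral formula with $\bar\partial_z S(z,a)=O(|\im z|^m)$ supplies for free.

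The Lipschitz case is the genuinely delicate one and is where your proposal misses the paper's key idea. You assert the first summand $S(\underline\lambda_k,a)\,\Pi_{\Lambda_k}(a)$ ``has the claimed regularity since $\Pi_{\Lambda_k}$ has been treated.'' But Proposition~\ref{proplipproj} gives a Lipschitz constant for the cluster projector that scales like $\delta^{-1}$ where $\delta$ is the spectral gap around the cluster at $\underline a$, and this gap is not bounded away from $0$ uniformly in $\underline a$. Lemma~\ref{lemLip} requires a constant $K$ uniform over $\Omega$, so the glueing cannot proceed from this. Writing $p_k=\Pi_{\Lambda_k}(a)-\Pi_{\Lambda_k}(\underline a)$, each term $S(\underline\lambda_k,\underline a)\,p_k$ is only $O(|a-\underline a|/\delta_k)$, and a uniform bound on $\sum_k S(\underline\lambda_k,\underline a)\,p_k$ holds only because $\sum_k p_k=0$ and the matrices $S(\underline\lambda_k,\underline a)$ vary Lipschitz-continuously in $\underline\lambda_k$, so nearby clusters carry nearly equal weights. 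Extracting the uniform $O(|a-\underline a|)$ bound from this cancellation is exactly the content of the paper's inductive Lemma~\ref{mainlem}, which has no counterpart in your plan. Your Helffer--Sj\"ostrand fallback does not close the gap either: for a $W^{1,\infty}$ symbol, no extension achieves $\bar\partial_z\widetilde S=O(|\im z|)$, the regularized $\partial_\lambda^2 S_\eps$ term contributes $O(|\im z|/\eps)$ against the resolvent bound $O(|\im z|^{-1})$, and the resulting area integral is $O(1/\eps)$, not uniform. The claim that ``the $C^\infty$-in-$\lambda$ analysis applies with Lipschitz-in-$a$ bound uniform in $\eps$'' is precisely the uniformity that still needs proof, and assuming it there is circular.
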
 

\begin{proof}[{\bf 1) } The $C^\infty$ case]
If $S$ were holomorphic in $\lambda$ one would have
$$
S_A(a) = \frac{1}{2i \pi} \int_{\partial D}  S (z,   a) (z \Id - A( a) )^{-1} dz  
$$
where $D $ is a rectangle  $[-R, R] + i [-\delta, \delta ]$ containing $\Sigma (a) $ in its interior, 
implying the result since $(z \Id - A(  a) )^{-1}$ is smooth in $a$ for $z \in \D D$. 
In the $C^\infty$ case, we modify this proof considering 
an almost holomorphic extension of $S$ in the variable $\lambda$.  It is a $C^\infty$ function in 
    $(z,  a)  \in \CC\times \Omega $ such that, for $z$ in bounded sets, 
    \begin{equation}
 \D_{\overline z } S ( z,   a)  = O(| \im  z |^\infty)
    \end{equation} 
 Since the result is local, we can assume that $\Omega $ is bounded and fix $R$ such that for 
 $  a \in \Omega$, the spectrum of  $  A(a)$ is contained in  
 $\{| z | \le R \}$.  Let $D$ denote the disc of radius $R+1$ in $\CC$. Then, 
 \begin{equation*}
 \begin{aligned}
   S_A  (a) = \frac{1}{2i \pi} & \int_{\partial D}  S (z,   a) (z \Id - A( a) )^{-1} dz  
\\
&    + \frac{1}{2i \pi}  \int_D \D_{\overline z}  S (z,   a) (z \Id - A( a) )^{-1} dz   d \overline z 
\end{aligned}
 \end{equation*} 
The first integral is $C^\infty$ in $a$ as explained above. 
In the second, we note that 
for all $m$, $ \D_{\overline z} S (z, a)  =  (\im z )^m R_m (z,  a)$ with $R_m$ smooth and 
$(\im z)^m (z \Id - A(a) )^{-1}$ has $m-1$ uniformly bounded  derivatives in $a$,  for $z \in D \backslash \RR$,  
implying that the second integral is $C^{m-1}$ with respect to $a \in \Omega$. 
\end{proof}

\begin{proof}[{\bf 2)} Continuity] Fix $\underline a \in \Omega$ and denote by $  \mu_j , 1 \le j\le m , $ the \emph{distinct} eigenvalues  
of $A(\underline a)$ and introduce $\delta = \min_{j \ne k} | \mu_j -   \mu_k| $. 
For $a$ in a neighborhood $\omega $  of $\underline a$, the spectrum of $A(a)$ is
contained in nonoverlapping intervals  $I_j = ] \mu_j - \delta /4 ,  \mu_j + \delta /4[$. 
Write
\begin{equation}
\label{decomp}
\begin{aligned}
S_A(a) = \sum_j & S(  \mu_j, \underline a) \Pi_{I_j}(a) 
\\
&+ \sum_j \sum_{ \lambda \in I_j \cap \Sigma(a) } \Big(S(\lambda, a) - S( \mu_j, \underline a)\Big)
\Pi _ \lambda( a) 
\end{aligned}
\end{equation}
with 
 \begin{equation}
\Pi_j(a) = \sum_{   \lambda \in I_j \cap \Sigma(a) } \Pi _ \lambda( a). 
\end{equation}
Using a uniform bound for the $\Pi_\lambda(a)$ and the continuity of the eigenvalues at $\underline a$ one concludes that the second sum  in \eqref{decomp}  tends to $0$ 
as $a $ tends to $\underline a$. By Proposition~\ref{proplipproj}, $\Pi_j$ is continuous  and hence  
  $S_A$ is continuous at $\underline a$. 
\end{proof}

\begin{proof}[{\bf 3)} Lipschitz continuity] 
By Lemma~\ref{lemLip} it is sufficient to prove that there is a positive constant   $K $   such that 
for all  $  \underline a \in \Omega$,  there is a neighborhood $\omega$  of $ \underline a$, such that 
\begin{equation} 
\label{youpi2} 
 \forall  a   \in \omega, \qquad  | S_A(a )  - S_A( \underline a)  | \le K | a - \underline a|. 
\end{equation}
The proof starts as in 2) with the decomposition \eqref{decomp}. Shrinking $\omega $ if necessary, 
the Lipschitz continuity of 
$S$ and of the eigenvalues implies that   for $a \in \omega$ and 
$ \lambda \in I_j \cap \Sigma(a)$ 
$$
\Big| S(\lambda, a) - S(  \mu_j, \underline a)\Big| \le C | a - \underline a|
$$
where $C$ depends only  $  \big\| \nabla_a A \big\|_{L^\infty(\Omega)}$ and
$  \big\| \nabla_{\lambda ,a} S \big\|_{L^\infty(\RR \times \Omega)}$.   
Since the $\Pi_\lambda$ are uniformly bounded, this implies that 
the second sum in \eqref{decomp} is uniformly  $O(|a- \underline a|)$ so that it remains 
to prove that, with $S_j = S(  \mu_j, \underline a)$ and 
$ p_j(a) =  \Pi_{j}(a) - 
\Pi_{j}(\underline a)\big) $,  one has
\begin{equation}
\label{est513}
\Big|   \sum_j   S_j  p_j(a)  \Big|  \le  K | a - \underline a|.
\end{equation}
with $K$ independent of $\underline a$ and $\omega$. Since $S \in W^{1, \infty}$, the  $S_j $ satisfy 
\begin{equation}
\label{hypvarS} 
\big| S_j \big| \le  K , \qquad \big| S_j - S_k \big| \le  K_1 | \mu_j - \mu_k |.  
\end{equation} 
Moreover,  Proposition~\ref{proplipproj} implies that 
for all $J \subset \{ 1, \ldots , m \}$ with $J \neq \emptyset$  and $J \neq \{ 1, \ldots , m \}$, 
\begin{equation}
P_J = \sum_{j \in J}  p_j (a) . 
\end{equation}
satisfies, with $\eps = | a - \underline a|$: 
\begin{equation}
\label{deltaJ} 
\big| P_J \big| \le K_2  \eps  \left(\min_{ j \in J, k \notin J} | \mu_j - \mu_k | \right)^{-1}.  
\end{equation}
Moreover, when  $J   = \{ 1, \ldots , m \}$,  
\begin{equation}
\label{deltaJmax}
 P_{\{ 1, \ldots , m \}}   = 0. 
\end{equation} 
The next lemma implies   \eqref{est513},  finishing the proof of the proposition. 
 \end{proof}
 
\begin{lem}
\label{mainlem} There is a constant $C_m $ which depends only on $m$, such that 
for all   $S_j$ and $p_j$, $1\le j\le m$  satisfying \eqref{hypvarS} \eqref{deltaJ}and  \eqref{deltaJmax}, the 
sum $S = \sum S_j p_j$ satisfies 
\begin{equation}
\label{estS} 
\big|  S \big| \le C_m  K_1 K_2 \eps . 
\end{equation}
\end{lem}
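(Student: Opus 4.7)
My plan is to prove the lemma by Abel (summation-by-parts) rearrangement of $S = \sum_j S_j p_j$ after ordering the eigenvalues.

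Relabel the indices so that $\mu_1 < \mu_2 < \ldots < \mu_m$, and for $1 \le j \le m$ set
\begin{equation*}
Q_j = \sum_{k=1}^j p_k = P_{\{1,\ldots,j\}}, \qquad Q_0 = 0.
\end{equation*}
Then $p_j = Q_j - Q_{j-1}$, so Abel summation gives
\begin{equation*}
S = \sum_{j=1}^m S_j (Q_j - Q_{j-1}) = S_m Q_m - \sum_{j=1}^{m-1}(S_{j+1} - S_j) Q_j.
\end{equation*}
By hypothesis \eqref{deltaJmax}, $Q_m = P_{\{1,\ldots,m\}} = 0$, so the boundary term vanishes and
\begin{equation*}
S = -\sum_{j=1}^{m-1}(S_{j+1} - S_j)\, Q_j.
\end{equation*}

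Now I estimate each term of this telescoped sum. By the Lipschitz assumption \eqref{hypvarS},
\begin{equation*}
|S_{j+1} - S_j| \le K_1 (\mu_{j+1} - \mu_j).
\end{equation*}
For $Q_j = P_{\{1,\ldots,j\}}$ with $1 \le j \le m-1$, the partition $J = \{1,\ldots,j\}$ and $J' = \{j+1,\ldots,m\}$ realizes the minimum separation $\min_{k \in J,\, l \in J'}|\mu_k - \mu_l| = \mu_{j+1} - \mu_j$ precisely because the eigenvalues are ordered. Hypothesis \eqref{deltaJ} then gives
\begin{equation*}
|Q_j| \le \frac{K_2\, \eps}{\mu_{j+1} - \mu_j}.
\end{equation*}
The crucial cancellation of the factor $\mu_{j+1} - \mu_j$ yields $|(S_{j+1} - S_j) Q_j| \le K_1 K_2\, \eps$, and summing over $j = 1,\ldots, m-1$ gives $|S| \le (m-1) K_1 K_2\, \eps$, proving the lemma with $C_m = m-1$.

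The only subtle step is recognizing that the hypothesis \eqref{deltaJ} on $P_J$ must be used exactly on the nested family $J = \{1,\ldots,j\}$ obtained after ordering the $\mu_j$'s, so that the denominator $\min_{k\in J, l \notin J}|\mu_k - \mu_l|$ telescopes against the Lipschitz increments $|S_{j+1}-S_j|$; any other grouping would either over- or under-count. The remaining obstacle is just to check that the vanishing of the total projector $P_{\{1,\ldots,m\}}$ kills the boundary term, which is why the hypothesis \eqref{deltaJmax} is indispensable (without it one would be left with an uncontrolled $S_m \cdot P_{\{1,\ldots,m\}}$ of size $K$, not $O(\eps)$). Everything else is a routine telescoping estimate.
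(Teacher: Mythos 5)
Your proof is correct, and it takes a genuinely different and in fact cleaner route than the paper's. The paper proves the lemma by induction on $m$: it identifies the closest pair of eigenvalues, which after relabeling it can take to be $(\mu_{m-1},\mu_m)$, splits off the term $(S_m - S_{m-1})\,p_m$ (bounded by $K_1 K_2\,\eps$ via \eqref{deltaJ} applied to $J=\{m\}$ and \eqref{hypvarS}), merges $p_{m-1}$ and $p_m$ into a single $\tilde p_{m-1}$, and applies the induction hypothesis to the resulting $(m-1)$-term sum, at the cost of a triangle-inequality argument \eqref{verif1} that replaces $\eps$ by $2\eps$. The recursion $C_m = 1 + 2C_{m-1}$ produces an exponential constant $C_m = 2^{m-1}-1$. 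You instead sort the $\mu_j$, perform Abel summation against the nested partial sums $Q_j = P_{\{1,\ldots,j\}}$, observe that for the ordered nested family the denominator in \eqref{deltaJ} is exactly the nearest-neighbor gap $\mu_{j+1}-\mu_j$, and cancel it against the Lipschitz increment $|S_{j+1}-S_j|\le K_1(\mu_{j+1}-\mu_j)$; the boundary term $S_m Q_m$ dies by \eqref{deltaJmax}. This is a single pass with no induction and yields the sharper linear constant $C_m = m-1$. Both proofs exploit the same two ingredients --- the pairing of \eqref{deltaJ} with the Lipschitz bound on $S$, and the total-projector cancellation \eqref{deltaJmax} --- but your summation-by-parts organization avoids the factor-of-two loss per step. (A minor remark: both arguments implicitly use that the $\mu_j$ are pairwise distinct, as they are in the application; if $\mu_{j+1}=\mu_j$ the corresponding term vanishes because $S_{j+1}=S_j$, so nothing breaks.)
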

\begin{proof}
By homogeneity, we can assume that $K_1 = K_2  = 1$. 
The proof is by induction on $m$. When $m =1$, the condition  \eqref{hypvarS} reduces  to $|S_{1} | \le 1$, 
the condition \eqref{deltaJ} is void and $ p_{1}   = 0 $.

We assume that the lemma is proved up to order $m- 1 \ge1$ and we prove  it at the order $m$. 
 Let  
\begin{equation}
 \delta :=  \min_{ j \ne  k } | \mu_j - \mu_k |  > 0 . 
\end{equation}
Permuting the indices we can assume that  the infimum is attained 
for $(j, k) = (m-1, m)$, which means that
\begin{equation}
\label{relabel}
\forall j \ne k : \quad  | \mu_{m-1}  - \mu_m |  \le  | \mu_j - \mu_k | . 
\end{equation}
We spilt $S$ in two terms: 
\begin{equation}
\label{splitS}
S = \widetilde S  \   +   \   (S_m - S_{m-1}) p_m  
\end{equation}
with 
\begin{equation}
\label{varS1} 
\widetilde S = \sum_{j= 1}^{m-1}  S_j  \tilde p_j   
\end{equation}
where 
$ \tilde p_j = p_j $   when  $ j \le m - 2$  and   $  \tilde p_{m-1} = p_{m-1} + p_m $.  

The condition \eqref{deltaJ} applied to $ J = \{ m\}$ and \eqref{relabel} imply that 
$| p_m |  \le \eps \delta^{-1} $ while \eqref{hypvarS} and  \eqref{relabel} imply that 
$| S_{m} - S_{m-1} | \le \delta$. This shows that the second term in \eqref{splitS} satisfies 
$| (S_m - S_{m-1})  p_m    | \le \eps $. 

We now check  that 
the induction hypothesis can be applied to $ \widetilde S $. 
The condition \eqref{hypvarS} is clear, so we only have to show 
that the conditions \eqref{deltaJ}
and \eqref{deltaJmax} are satisfied for the $\tilde p_j  $. 

Consider  a non emptyl subset $\widetilde J \subset \{1, \ldots, m-1\}$.  Then
$\widetilde P_{\widetilde J}  = P_{J } $ with

\qquad 1)   $ J  = \widetilde J \cup\{m\}$  if   $m-1 \in \widetilde J$, 

\qquad 2)   $ J  = \widetilde J  $  if   $m-1 \notin \widetilde J$, 

\noindent
In particular, $\widetilde P_{ \{1, \ldots, m-1\}} =  P_{ \{1, \ldots, m\}} = 0$ 
so that \eqref{deltaJmax} for $\widetilde S$ is satisfied.

Suppose that  $\widetilde J \ne  \{ 1, \ldots, m-1\}$ and let  $J$ be as above. Introduce also 
$\widetilde K =  \{1, \ldots, m-1\} \backslash \widetilde J$ and 
$K = \{ 1, \ldots, m \} \backslash J$. One has $K = \widetilde K$ in case 1) and 
$K = \widetilde K \cup\{m \}$ in case 2). By assumption, we know that 
\begin{equation*}
\big| \widetilde P_{\tilde J }   \big| 
= \big|   P_{  J }   \big|  \le  \eps  \left(\min_{ j \in J  , k \in K }
  | \mu_j - \mu_k | \right)^{-1} .
\end{equation*}
We claim that 
\begin{equation}
\label{verif1}
\min_{ j \in \tilde J ,  k \in \tilde K  } | \mu _j - \mu_k |
\le   2 
\min_{ j \in J  , k \in  K   } | \mu_j - \mu_k |. 
\end{equation} 
Indeed, if it is true, it implies that 
\begin{equation}
\label{deltaJt} 
\big| \widetilde P_{\tilde J }   \big| \le   2 \eps  \left(\min_{ j \in \tilde J  
,  k \in \tilde K } | \mu_j - \mu_k | \right)^{-1}
\end{equation}
 so that the induction hypothesis is satisfied for $\widetilde S$ with $\eps$ replaced by $2 \eps$. Thus 
 $ 
 \big| \widetilde S \big| \le  2 C_{m-1} \eps 
 $  and \eqref{estS} follows with $C_m = 1 + 2 C_{m-1}$. 

Therefore, to complete the proof, it remains to prove the claim \eqref{verif1}.  
In this estimate, $\tilde J$ and $\tilde K$ play symmetric roles, and therefore 
we can assume that $m \in J$, that is that $m-1 \in \tilde J$.  
Comparing the sets $\tilde J \times \tilde K$ and $J \times K$, we see that the only nontrivial case concerns 
 $ | \mu_m- \mu_k | $ when $k \in K = \tilde K$. In this case, since 
$m-1 \in \tilde J$, the claim follows from the inequality 
\begin{equation*}
 | \mu_{m-1}- \mu_k |  \le  
  | \mu_{m }- \mu_k |  +  | \mu_{m-1}- \mu_m  |   \le  2  | \mu_m- \mu_k | 
\end{equation*}
where we have used \eqref{relabel}. 
The proof  of the lemma is now complete. 
\end{proof}

\bigbreak



\section{Appendix B : Symmetrizable  matrices} 

\subsection{Positivity of symmetrizers and bounds }
   
  \begin{prop}
  \label{propPSP}
 Suppose that $L$ and $\bS$ ares matrices such that $\bS L$ is hermitian symmetric. 
Suppose that $J $ is  an invertible matrix such that 
\begin{equation}
\label{positiv1}
\forall u \in \ker L  , \qquad \re  \big(  \bS J u ,  u \big)  \ge c | u |^2  . 
\end{equation}
Then, $0$ is a semi-simple eigenvalue of $J^{-1}L  $  and the associated eigenprojector 
$\Pi $ satisfies  
\begin{equation}
\label{eqPiSPi}
\Pi^*   \bS J  \Pi = \Pi^*  \bS  J  
\end{equation}
and 
\begin{equation}
\label{normePi}
\big| \Pi \big| \le  | \Sigma  J  | / c. 
\end{equation}
Moreover, 
\begin{equation}
\label{ortho}
\bS   f   \in   (\ker L) ^\perp \quad \Leftrightarrow \quad  f \in \mathrm{range} (L).
\end{equation}

  \end{prop}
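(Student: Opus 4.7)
The central observation is that the positivity hypothesis combined with the symmetry of $\bS L$ forces the semi-simplicity of the zero eigenvalue of $J^{-1}L$. Since $J$ is invertible, $\ker(J^{-1}L) = \ker L$, and semi-simplicity of $0$ is equivalent to the direct sum decomposition $\CC^N = \ker L \oplus \mathrm{range}(J^{-1}L)$, which in turn amounts to showing the intersection is trivial. So my first step is: take $u \in \ker L$ with $Ju = Lv$ for some $v$; then using $(\bS L)^* = \bS L$ and $Lu = 0$, compute $(\bS J u, u) = (\bS L v, u) = (v, \bS L u) = 0$, so $\re(\bS J u, u) = 0$ and the positivity hypothesis forces $u = 0$. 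This yields the decomposition and defines the spectral projector $\Pi$ onto $\ker L$ parallel to $\mathrm{range}(J^{-1}L)$.

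Next I would establish the identity \eqref{eqPiSPi}. The key is to show $\Pi^* \bS L = 0$: since $L \Pi = 0$, one has $(\bS L)\Pi = 0$; applying the adjoint and using $L^* \bS^* = \bS L$ gives $\Pi^* \bS L = \bigl((\bS L)\Pi\bigr)^* = 0$. Now for any $w$, $(I - \Pi)w \in \mathrm{range}(J^{-1}L)$ by definition of $\Pi$, so $J(I-\Pi)w \in \mathrm{range}(L)$, and hence $\Pi^* \bS J (I-\Pi) = 0$, which is exactly \eqref{eqPiSPi}.

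For the norm bound \eqref{normePi}, I would apply \eqref{eqPiSPi} to a vector $u$ and pair with $\Pi u$. This gives $(\bS J \Pi u, \Pi u) = (\bS J u, \Pi u)$; taking real parts and using the positivity on $\ker L$ (applied to $\Pi u \in \ker L$) yields
\begin{equation*}
c |\Pi u|^2 \le \re(\bS J \Pi u, \Pi u) = \re(\bS J u, \Pi u) \le |\bS J|\, |u|\, |\Pi u|,
\end{equation*}
which divides out to the claimed bound.

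Finally, for \eqref{ortho}, the $(\Leftarrow)$ direction is immediate: if $f = Lv$, then for $u \in \ker L$ one has $(\bS f, u) = (\bS L v, u) = (v, \bS L u) = 0$. For $(\Rightarrow)$, use the decomposition from step one to write $J^{-1} f = u + J^{-1}Lv$ with $u \in \ker L$, so $f = Ju + Lv$ and $\bS f = \bS J u + \bS L v$; since $\bS L v \perp \ker L$ already, the hypothesis $\bS f \perp \ker L$ gives $\bS J u \perp \ker L$, and in particular $(\bS J u, u) = 0$, which together with positivity forces $u = 0$ and hence $f = Lv \in \mathrm{range}(L)$. The main conceptual step is the first one (semi-simplicity via the positivity/symmetry pairing); everything else is a direct consequence of this decomposition combined with the hermitian identity $L^* \bS^* = \bS L$.
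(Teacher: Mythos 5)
Your proof is correct and follows essentially the same route as the paper: both establish $\ker L \cap \mathrm{range}(J^{-1}L)=\{0\}$ by pairing $\bS J u$ against $u$ and invoking the symmetry of $\bS L$, then deduce \eqref{eqPiSPi} from $\bS\,\mathrm{range}(L)\subset(\ker L)^\perp$, and extract \eqref{normePi} from the resulting identity $(\bS J\Pi u,\Pi u)=(\bS J u,\Pi u)$. The only cosmetic difference is in \eqref{ortho}: the paper closes the $(\Rightarrow)$ direction by a dimension count ($\dim\mathrm{range}(L)+\dim J\ker L=N$ forces $\mathrm{range}(L)=\bS^{-1}((\ker L)^\perp)$), while you rerun the positivity argument on the $\ker L$ component of $J^{-1}f$; both are equally valid.
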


\begin{proof}  
Let $\bK $ and $\bR$ denote respectively the kernel and the range of $L$. 
The identity $(\bS Lu, v) =  (u ,  \bS L v) $  implies  
that $\bS  \bR  \subset \bK^\perp$ and hence $\bR \subset \bS^{-1}( \bK^\perp) $. 
Next we note that \eqref{positiv1} implies that  if 
$u\in \bK$ and $\bS J u \in \bK^\perp$, then $u= 0$, so that
  $ J \bK  \cap \bS^{-1} (\bK^\perp ) = \{0\}$: 
  \begin{equation}
\label{tagada}
\bR \subset \bS^{-1}( \bK^\perp), \qquad J \bK  \cap \bS^{-1} (\bK^\perp ) = \{0\}. 
\end{equation} 
 In particular $J \bK \cap  \bR = \{0\}$ and $\bK \cap J^{-1} \bR = \{0\}$. 
This means that the kernel  $\bK$ of $A_J - \tau \Id$   has a trivial intersection 
with the range $J^{-1} \bR $ of $A_J - \tau\Id$, that is that 
 $\tau $ is a semisimple eigenvalue of $A_J$. 
 Moreover, since $\dim \bR +   \dim  J \bK = N $, \eqref{tagada} implies that 
 $\bR  =  \bS^{-1}( \bK^\perp) $, that is \eqref{ortho}.

In the splitting $u = \Pi u + (\Id- \Pi) u$, $\Pi u \in \bK $ and  there is $v$ 
such that $(\Id- \Pi) u= J^{-1} Lv$. Therefore  $(\bS L v , \Pi  u) = 0$ and 
$\big( \bS J \Pi u, \Pi u) =  \big( \bS J u, \Pi u)$. Hence, 
$$
c \big| \Pi u \big|^2 \le \re \big( \bS J \Pi u, \Pi u) = \re \big( \bS J u, \Pi u)
\le \big| \bS J u \big|  \ \big| \Pi u \big|
$$
and \eqref{normePi} follows.

For  $f \in \bK^\perp$ and $u\in \CC^N$ one has   $(\Pi^* f , u )   =  (  f , \Pi u ) = 0$. 
 Thus  $ \bK^\perp \subset \ker \Pi^*$ and indeed $ \bK^\perp =  \ker \Pi^*$ 
 since the two spaces have the same dimension. 
 For all $u$,  $(\Id - \Pi) u \in J^{-1} \bR$,
 hence   $ \bS J (\Id - \Pi)  u \in \bK^\perp$ and therefore
 $\Pi^* \Sigma J (\Id - \Pi)  u = 0$ that is \eqref{eqPiSPi}. 
\end{proof}

\begin{prop}
\label{positivcone}
Suppose that $L$ and $\bS$ ares matrices such that $\bS L$ is hermitian symmetric. 
We now assume that we are given two matrices 
$J_0$ and $J_1$ such that for all $t \in [0,1]$
$J_t = (1-t) J_0 + t J_1$ is invertible, and for all $u$ and $v$ in $\ker $, 
 \begin{equation}
\label{symlin1}
 \big(  \bS J_t   u ,  v \big) =  \big(  u, \bS (a) J_t  v \big) . 
\end{equation}
Suppose that $J_0$ satisfies 
\begin{equation}
\label{positiv10}
\forall u \in \bK_a , \qquad \big(  \bS  J_0 u ,  u \big)  \ge c | u |^2  . 
\end{equation}
and suppose that for $t \in [0, 1[$,  $0$ is a semi simple eigenvalue of $J_t^{-1} L $. 
Then for all $t \in [0,1]$, 
\begin{equation}
\label{positiv11}
\forall u \in \ker L , \qquad \big(  \bS J_t  u ,  u \big)  \ge (1-t) c | u |^2  . 
\end{equation}
\end{prop}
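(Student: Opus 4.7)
The plan is to work with the hermitian form $Q_t(u,v) := (\bS J_t u, v)$ on $\ker L$ and reduce the positivity at arbitrary $t \in [0,1]$ to the positivity of its endpoint $Q_1$, exploiting that $Q_t$ is affine in $t$.

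First I set up notation. Let $\bK = \ker L$ and $\bR = \range L$. Hypothesis \eqref{symlin1} says that $Q_t$ is a hermitian form on $\bK$, so $Q_t(u,u)$ is real and $Q_t(u,u) = (1-t) Q_0(u,u) + t Q_1(u,u)$ is affine in $t$. I apply Proposition~\ref{propPSP} at $t=0$ with $J = J_0$ (which is legitimate by \eqref{positiv10}) to extract the orthogonality identity \eqref{ortho}: $\bS f \in \bK^\perp$ if and only if $f \in \bR$. This identity depends only on $\bS$ and $L$, so it remains valid throughout the deformation and will serve as the bridge between the variational information carried by $Q_t$ and the algebraic semi-simplicity hypothesis.

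Next I would show that $Q_t$ is positive definite on $\bK$ for every $t \in [0, 1)$. Let $t^*$ be the supremum of those $t \in [0,1)$ for which $Q_s$ is positive definite on $\bK$ for all $s \in [0, t]$; from $Q_0 \ge c|u|^2$ and continuity of $t \mapsto Q_t(u,u)$ we have $t^* > 0$. Suppose, for contradiction, that $t^* < 1$. Then by continuity $Q_{t^*}$ is positive semi-definite with a nontrivial kernel, so there exists $u \in \bK \setminus\{0\}$ with $Q_{t^*}(u,u) = 0$; the Cauchy-Schwarz inequality for positive semi-definite forms then yields $Q_{t^*}(u,v) = 0$ for every $v \in \bK$, i.e.\ $\bS J_{t^*} u \in \bK^\perp$. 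By \eqref{ortho} this forces $J_{t^*} u \in \bR$, hence
\[
u \in \bK \cap J_{t^*}^{-1} \bR \;=\; \ker(J_{t^*}^{-1} L) \cap \range(J_{t^*}^{-1} L).
\]
The semi-simplicity of $0$ as an eigenvalue of $J_{t^*}^{-1} L$ (valid because $t^* < 1$) makes this intersection trivial, so $u = 0$, a contradiction.

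Finally, for each nonzero $u \in \bK$ the affine function $t \mapsto Q_t(u,u)$ is strictly positive on $[0,1)$ with $Q_0(u,u) \ge c|u|^2$; this forces $Q_1(u,u) \ge 0$, since otherwise the line $(1-t) Q_0(u,u) + t Q_1(u,u)$ would cross zero inside $(0,1)$. Combining this with $Q_0(u,u) \ge c|u|^2$ and convex interpolation gives
\[
Q_t(u,u) \;=\; (1-t) Q_0(u,u) + t Q_1(u,u) \;\ge\; (1-t)\, c\, |u|^2
\]
for every $t \in [0,1]$, which is exactly \eqref{positiv11}. The main obstacle is the middle step, where the analytic degeneracy of $Q_{t^*}$ has to be translated, via \eqref{ortho}, into the purely algebraic statement that $\ker(J_{t^*}^{-1}L) \cap \range(J_{t^*}^{-1}L)$ is nontrivial; the semi-simplicity hypothesis is then precisely what rules this out.
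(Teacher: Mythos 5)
Your proof is correct and follows essentially the same route as the paper's: both arguments track positive definiteness of the form $Q_t(u,v) = (\bS J_t u, v)$ on $\ker L$ via a continuity argument, use \eqref{ortho} from Proposition~\ref{propPSP} to convert degeneracy of $Q_t$ into $u \in \ker(J_t^{-1}L) \cap \range(J_t^{-1}L)$, invoke semi-simplicity at $t<1$ to exclude this, and finally obtain the lower bound $(1-t)c$ from the affine interpolation $Q_t = (1-t)Q_0 + tQ_1$ with $Q_1 \ge 0$. Your writeup is somewhat more careful than the paper's terse version — you make explicit the supremum argument, the semi-definiteness of the limiting form, and the Cauchy--Schwarz step that extracts a radical vector — but the underlying idea is identical.
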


\begin{proof}
The assumption \eqref{symlin1} means that  the restriction of  $\bS  J_t $ to  $\ker L$ is 
symmetric.  By  \eqref{positiv10}, it is positive definite for $t = 0$. By continuity, it remains positive 
as long as it remains definite. It is indefinite when there is a  $u\in \ker L$, $u \ne 0$  such that 
$\bS J_t   u (\in \ker L)^\perp$.    By \eqref{ortho}, this would imply that 
$ u\ne 0$ would belong both to the kernel and to the range  
of $J_t{-1} L$, contradicting the assumption that $0$ is  a semisimple eigenvalue. 
By continuity, $\bS J_1$ is nonnegative and \eqref{positiv11} follows. 
 \end{proof}



\subsection{From full symmetrizers to symmetrizers}

  We consider here $N \times N$ matrices 
\begin{equation}
L(\tau , a)  =  \tau J(a)  -    A (a) 
\end{equation}
 which depend  on parameters $a$ in an open set $\Omega $ and $\tau \in \RR$.
 We always assume that $J(a) $ is invertible. We link the spectral properties
 of $A_J(a) := J(a)^{-1} A(a)$ to the existence of symmetrizers and full symmetrizers 
 of $L(\tau, a)$.

  \begin{ass}
  \label{assfullsym}
  We assume that the matrices $J(a)$, $J(a)^{-1} $ are uniformly bounded and that 
  there are uniformly bounded matrices 
  $\bS (\tau, a)$  is such that for all $\tau$ and $a$, 
  $\bS(\tau, a) L(\tau, a)$ is hermitian symmetric and 
\begin{equation}
\label{positivbis}
\forall u \in \ker L(\tau, a)  , \qquad \re  \big(  \bS (\tau, a) J(a) u ,  u \big)  \ge c | u |^2  . 
\end{equation}
where $c$ is independent of $a$ and $\tau$. 

We further assume that 
all the complex roots in $\tau$ of $\det L(\tau, a) = 0$ are real.  
  \end{ass}

   Proposition~\ref{propPSP} implies that   eigenvalues $\tau$ of $J(a)^{-1} A(a)$  are real and semi simple and  that  corresponding 
eigenprojectors are uniformly bounded:

 \begin{cor}
\label{cor62}

Under Assumption~\ref{assfullsym},  the family $A_J(a)$ is uniformly strongly hyperbolic in the sense of Assumption~$\ref{assSHM}$. 

\end{cor}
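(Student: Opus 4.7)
The plan is to verify property (ii) of Proposition~\ref{propstronghyp} uniformly in $a \in \Omega$, namely that all eigenvalues of $A_J(a)$ are real and semi-simple, with uniformly bounded eigenprojectors. This will follow from Proposition~\ref{propPSP} applied pointwise to each eigenvalue.

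First, I would fix $a \in \Omega$ and an eigenvalue $\tau \in \CC$ of $A_J(a) = J(a)^{-1} A(a)$. Since $L(\tau,a) = \tau J(a) - A(a) = J(a)(\tau\,\Id - A_J(a))$ and $J(a)$ is invertible, $\tau$ is an eigenvalue of $A_J(a)$ iff $\ker L(\tau,a) \ne \{0\}$, and equivalently $\det L(\tau,a) = 0$; by the last clause of Assumption~\ref{assfullsym} this forces $\tau \in \RR$. Thus only real eigenvalues occur and I may freely apply the symmetrizer hypotheses with $\tau \in \RR$.

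Next, for such a real eigenvalue $\tau$, I would invoke Proposition~\ref{propPSP} with the triple $(L(\tau,a), \bS(\tau,a), J(a))$: the hypothesis $\bS(\tau,a)L(\tau,a)$ hermitian symmetric together with the positivity \eqref{positivbis} are exactly the assumptions of that proposition. The conclusion gives that $0$ is a semi-simple eigenvalue of $J(a)^{-1} L(\tau,a) = \tau\,\Id - A_J(a)$, i.e.\ $\tau$ is a semi-simple eigenvalue of $A_J(a)$, and the associated eigenprojector $\Pi_\tau(a)$ satisfies the bound
\begin{equation*}
|\Pi_\tau(a)| \le |\bS(\tau,a) J(a)|/c.
\end{equation*}
Since by Assumption~\ref{assfullsym} the matrices $\bS(\tau,a)$ and $J(a)$ are uniformly bounded and $c$ is independent of $(a,\tau)$, this supplies a bound on $|\Pi_\tau(a)|$ by a single constant $C_2$ independent of $a$ and of the particular eigenvalue $\tau$.

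Putting these pieces together, for every $a \in \Omega$ the matrix $A_J(a)$ has only real semi-simple eigenvalues whose spectral projectors are uniformly bounded by $C_2$; this is precisely condition (ii) of Proposition~\ref{propstronghyp}, which by that proposition is equivalent to the uniform strong hyperbolicity of the family $\{A_J(a)\}_{a \in \Omega}$ as defined in Assumption~\ref{assSHM}. No step here is really the obstacle: the content lies entirely in the earlier Proposition~\ref{propPSP}, and the corollary is just a matter of recognizing that its hypotheses hold uniformly in $(a,\tau)$ and that reality of the eigenvalues has been assumed rather than derived.
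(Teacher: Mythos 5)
Your proof is correct and follows essentially the same route the paper sketches: invoke Proposition~\ref{propPSP} at each real root $\tau$ of $\det L(\tau,a)=0$ to get semi-simplicity and the uniform eigenprojector bound $|\Pi_\tau(a)| \le |\bS(\tau,a)J(a)|/c$, then conclude via criterion (ii) of Proposition~\ref{propstronghyp}. You are in fact slightly more careful than the paper's one-line remark in separating what is assumed (reality of the roots, last clause of Assumption~\ref{assfullsym}) from what Proposition~\ref{propPSP} actually delivers (semi-simplicity and the projector bound).
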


\begin{theo}
\label{theomajmod}
In addition to Assumption~$\ref{assfullsym}$, suppose that  
$J$, $A$ and $\bS$ are continuous [resp. Lipschitz continuous] [resp. $C^\infty$] 
in $a \in \Omega$ and $\tau \in \RR$. Then there is  a bounded and 
continuous [resp. Lipschitz continuous] [resp. $C^\infty$]  matrix $S(\cdot)$ on $\Omega$  such that 
\begin{equation}
S(a) J(a) = \big( S(a) J(a) \big)^* \ge c_1 \Id , \qquad  S(a) A (a) = \big( S(a) A(a) \big)^*, 
\end{equation} 
with $c_1 > 0$ independent of $a$.

\end{theo}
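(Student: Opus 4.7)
The plan is to construct $S(a) = B(a) J(a)^{-1}$ where $B(a)$ is a Hermitian positive definite matrix such that $B(a) A_J(a)$ is Hermitian, with $A_J(a) := J(a)^{-1} A(a)$. By Corollary~\ref{cor62}, the family $A_J(a)$ is uniformly strongly hyperbolic, so its spectrum $\Sigma(a) \subset \RR$ consists of semisimple eigenvalues with uniformly bounded spectral projectors $\Pi_\tau(a)$, and $A_J(a) = \sum_{\tau \in \Sigma(a)} \tau \Pi_\tau(a)$. I will define
\begin{equation*}
B(a) := \sum_{\tau \in \Sigma(a)} \Pi_\tau(a)^* \, \bS(\tau, a) \, J(a), \qquad S(a) := B(a) J(a)^{-1},
\end{equation*}
verify its algebraic properties by exploiting an orthogonality relation, and then read off smoothness via the functional calculus of Theorem~\ref{theoFC}.

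The first step is to derive the orthogonality relation
$\Pi_\tau(a)^* \bS(\tau, a) J(a) \Pi_{\tau'}(a) = 0$ whenever $\tau \ne \tau'$.
Indeed, for $y \in \range \Pi_{\tau'}(a)$ one has $A y = \tau' J y$, so $L(\tau, a) y = (\tau - \tau') J(a) y$; on the other hand, the Hermiticity of $\bS(\tau) L(\tau)$ together with $L(\tau) u = 0$ for $u \in \range \Pi_\tau$ gives $\bS(\tau) L(\tau) y \perp \range \Pi_\tau$, and dividing by $\tau - \tau'$ yields the claim. Using this identity and $\sum_\tau \Pi_\tau = \Id$, I rewrite $B(a) = \sum_\tau \Pi_\tau^* \bS(\tau) J \Pi_\tau$. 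The required algebraic properties then follow: Hermiticity of $B$ (using that $\bS(\tau) J$ is Hermitian on $\ker L(\tau)$, obtained as the $\epsilon \to 0$ limit of $\bS(\tau + \epsilon) L(\tau+\epsilon)/\epsilon$ restricted to $\ker L(\tau)$); uniform positive definiteness (combining the hypothesis $\re(\bS J u, u) \ge c|u|^2$ on kernels with the Cauchy--Schwarz bound $\sum_\tau |\Pi_\tau u|^2 \ge |u|^2 / N$); and Hermiticity of $BA_J = \sum_\tau \tau \Pi_\tau^* \bS(\tau) J \Pi_\tau$, which is Hermitian term by term.

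The main obstacle is the smoothness of $B(a)$ across eigenvalue collisions, where the individual projectors $\Pi_\tau(a)$ and the labelling of $\Sigma(a)$ become discontinuous. The key observation is that, using $B = B^*$,
\begin{equation*}
B(a) = \sum_{\tau \in \Sigma(a)} J(a)^* \, \bS(\tau, a)^* \, \Pi_\tau(a)
\end{equation*}
is precisely the functional calculus $F_{A_J}(a)$ of Theorem~\ref{theoFC} applied to the matrix-valued symbol $F(\tau, a) := J(a)^* \bS(\tau, a)^*$. Since $F$ inherits the regularity (continuous, Lipschitz, or $C^\infty$) of $J$ and $\bS$, and $A_J$ inherits its regularity from $A, J$ while satisfying Assumption~\ref{assSHM} by Corollary~\ref{cor62}, Theorem~\ref{theoFC} directly yields that $B(a)$ has the same regularity in $a$. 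Finally, $J(a)^{-1}$ inherits the regularity from $J(a)$ thanks to uniform invertibility, so $S(a) = B(a) J(a)^{-1}$ is of the required class, with the uniform bounds $c_1 \Id \le SJ = B$ and $|S| \le C$ following from the previous step, completing the proof.
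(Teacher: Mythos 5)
Your proof is correct and follows essentially the same route as the paper's: the candidate symmetrizer is built from the spectral projectors of $A_J$ as $\sum_\tau \Pi_\tau^*\bS(\tau)J\Pi_\tau$, the orthogonality relation $\Pi_\tau^*\bS(\tau)J\Pi_{\tau'}=0$ for $\tau\ne\tau'$ (which is the paper's \eqref{eqPiSPi}) is used to recast it in the functional-calculus form $\sum_\tau J^*\bS(\tau)^*\Pi_\tau$, and Theorem~\ref{theoFC} then supplies the regularity. The only differences are cosmetic: the paper first left-multiplies by $J^{-1}$ to reduce to $J=\Id$, whereas you keep $J$ explicit throughout, and you rederive the orthogonality by a direct eigenvalue computation instead of invoking Proposition~\ref{propPSP}.
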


\begin{proof} Multiplying $L$ by $J^{-1}$ reduces to the case $J = \Id$. 
A symmetrizer is 
\begin{equation}
\label{defiS}
S(a) =  \sum_{\tau \in \Sigma(a) } \Pi(\tau, a)^* \bS (\tau, a ) \Pi(\tau, a) 
\end{equation} 
where $\Sigma(a) \subset \RR $ denotes the spectrum of 
$A(a)$. 

For $u$ and $v$ in $\ker L(\tau, a)$, one has $L(\tau + \sigma s , a)   u = \sigma  J(a) u $ and a similar expression for $v$. 
The symmetry implies 
\begin{equation*}
\big( \bS(\tau + \sigma  \nu) J (a) u , v \big)  = \big( u,  \bS(\tau + \sigma u)J (a) v \big)    
\end{equation*} 
and letting $\sigma$ tend to zero implies 
 \begin{equation}
\label{symlin2}
 \big(  \bS(\tau, a) J(a) u ,  v \big) =  \big(  u,  \bS (\tau, a) J(a)  v \big) . 
\end{equation}

This shows that each term of the sum \eqref{defiS} is symmetric and 
$S$ is symmetric. Moroever, by \eqref{positivbis} and Corollary~\ref{cor62}, it is   uniformly bounded and  uniformly positive. 
By construction, $S(a) A(a)$ is symmetric and  
by \eqref{eqPiSPi} and symmetry,  one has 
\begin{equation}
S(a) =  \sum_{\tau \in \Sigma(a) } \Pi(\tau, a)^* \bS (\tau, a ) = 
\sum_{\tau \in \Sigma(a) }  \bS^*  (\tau, a ) \Pi(\tau, a). 
\end{equation} 
Theorem~\ref{theoFC}  implies that $S $ is continuous [resp. Lipschitz continuous] 
[resp. $C^\infty$] in $a$, finishing the proof of the theorem. 
\end{proof} 








 \end{document}